\newif\ifsmfart
\numberwithin{equation}{section}
\def\Title    {Unramified cohomology of finite groups of Lie type}
\def\Author   {Fedor Bogomolov, Tihomir Petrov and Yuri Tschinkel}
\def\Subject  {Algebraic geometry}
\def\Keywords {Rationality}
\theoremstyle{plain}
\newtheorem{prop}[subsection]{Proposition}
\newtheorem{thm}[subsection]{Theorem}
\newtheorem{coro}[subsection]{Corollary}
\newtheorem{lemm}[subsection]{Lemma}
\theoremstyle{definition}
\newtheorem{conj}[subsection]{Conjecture}
\theoremstyle{remark}
\newtheorem{rem}[subsection]{Remark}
\newtheorem{exam}[subsection]{Example}
\newcommand{\C}{\Bbb C}
\newcommand{\Z}{\Bbb Z}
\newcommand{\ra}{\rightarrow}
\def\KK{\boldsymbol{K}}
\def\I{{\mathcal I}}
\def\ba{\backslash}
\newcommand{\Aut}{\operatorname{Aut}}
\newcommand{\G}{\operatorname{G_2}}
\newcommand{\GL}{GL}
\newcommand{\Ker}{\operatorname{Ker}}
\newcommand{\Syl}{\mathrm{Syl}}
\newcommand{\Val}{\operatorname{Val}}
\def\cV{{\mathcal V}}
\def\G{{\mathcal G}}
\def\char{{\rm char}}
\def\deg{{\rm deg}}
\def\A{{\mathbb A}}
\def\C{{\mathbb C}}
\def\F{{\mathbb F}}
\def\Z{{\mathbb Z}}
\def\C{{\mathbb C}}
\def\N{{\mathbb N}}
\begin{document}
\title[Unramified cohomology of finite groups of Lie type]{Unramified cohomology of finite 
groups of Lie type}
\author{Fedor Bogomolov}
\address{Courant Institute of Mathematical Sciences\\
251 Mercer Street\\
New York, NY 10012--1185, USA}
\email{bogomolo@cims.nyu.edu}

\author{Tihomir Petrov}
\address{Department of Mathematics\\
University of California, Irvine\\
Irvine, CA 92697-3875, USA}
\email{tpetrov@math.uci.edu}

\author{Yuri Tschinkel}
\address{Courant Institute of Mathematical Sciences\\
251 Mercer Street\\
New York, NY 10012--1185, USA}
\email{tschinkel@cims.nyu.edu}


\keywords{Rationality, finite simple groups, unramified cohomology}

\date{\today}


\begin{abstract}
We prove vanishing results for unramified stable cohomology of finite groups of Lie type.
\end{abstract}

\maketitle

\tableofcontents

\section{Introduction}
\label{sect:intro}

Let $k$ be an algebraically closed field,  
$G$ a finite group and $V$ a faithful representation of $G$ over $k$. 
In this note we compute cohomological obstructions to 
stable rationality of quotients of $V$ by $G$ 
introduced by Saltman \cite{saltman} and 
\cite{bogomolov} and studied in  
\cite{ct}, \cite{peyre},
\cite{bmp}. 

\

Let $K=k(V)^G$ be the function field
of the quotient variety and 
$$
s\,:\,  \G_K \to G
$$
the natural homomorphism 
from the absolute Galois group of $K$ to $G$. 
We have an induced map on 
cohomology with coefficients 
in the torsion group $\Z/\ell$, with trivial $G$-action, 
$$
s^*_i:  H^i(G,\Z/\ell)\to H^i(\G_K,\Z/\ell). 
$$
Note that $s_i^*$ depends on the ground field $k$, but not
on the choice of the faithful representation $V$ over that field.  
The groups 
$$
H^i_{k,s} (G,\Z/\ell):=H^i(G,\Z/\ell)/ \Ker(s^*_i),
$$
are called {\em stable cohomology groups} over $k$. 
They form a finite ring.
We may consider them as subgroups of $H^i(\G_K,\Z/\ell)$.  
Every divisorial valuation $\nu\in \Val_K$ of $K$ defines a residue map 
$$
\partial_{\nu}: H^i(\G_K, \Z/\ell)\to H^{i-1}(\G_{\KK_{\nu}}, \Z/\ell),
$$
where $\KK_{\nu}$ is the residue field of $\nu$. 
The groups 
$$
H^i_{k,un} (G,\Z/\ell):=\bigcap_{\nu\in \Val_K}\Ker(\partial_{\nu}\circ s_i^*) 
\subset H^i_{k,s} (G,\Z/\ell)
$$
form  a subring of $H^*_{k,s} (G,\Z/\ell)$. 
A basic fact is that if there exists a faithful representation $V$ of $G$ 
over $k$ and a unirational parametrization of the quotient $V/G$ 
whose degree is prime to $\ell$,
then 
$$
H^i_{k,un}(G,\Z/\ell)=0,\,\, \text{ for  all  } i>0.
$$ 
In particular, these cohomology groups vanish if this 
quotient is stably rational. 

\

For example, the rings of invariants of finite groups generated by 
pseudo-reflections are polynomial, 
the corresponding quotient varieties rational, and the cohomological invariants
trivial. In particular, all Weyl groups $\mathcal W$ of semi-simple
Lie groups have 
$$
H^i_{k,un}(\mathcal W,\Z/\ell)=0,\,\, \text{ for  all  } i>0, \,\,
\text{ and all } \,\, k.
$$ 

\begin{conj}
\label{conj:main}
Let $G$ be a finite simple group. Then 
$$
H^i_{k,un}(G,\Z/\ell)=0,\,\, \text{ for all }\,\, i>0, 
\,\,\text{ all }\,\, k
\,\, \text{ and  all }\,\, 
\ell.
$$ 
\end{conj}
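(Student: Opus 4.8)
\emph{The plan} is to reduce, via the classification of finite simple groups, to a statement about Sylow subgroups, treated one prime at a time; the groups of Lie type to which this paper is devoted carry the main content. The basic reduction is a detection lemma: for each prime $\ell$, restriction to a Sylow $\ell$-subgroup $G_\ell\subset G$ induces an injection
$$
H^i_{k,un}(G,\Z/\ell)\hookrightarrow H^i_{k,un}(G_\ell,\Z/\ell).
$$
This follows from the transfer attached to the finite extension $k(V)^G\subset k(V)^{G_\ell}$ of degree $[G:G_\ell]$: the composite $\mathrm{cor}\circ\mathrm{res}$ is multiplication by this degree, a unit modulo $\ell$, and both maps are compatible with the residues $\partial_\nu$ by the functoriality of unramified cohomology (Rost's cycle modules). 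Since $H^i(G,\Z/\ell)=0$ for $\ell\nmid|G|$, Conjecture~\ref{conj:main} is reduced to the vanishing of $H^i_{k,un}(G_\ell,\Z/\ell)$ for the Sylow subgroups $G_\ell$ of finite simple groups.

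Two cases are then soft. If $\ell=\char(k)$, the group $G_\ell$ is a finite $\char(k)$-group, which acts unitriangularly on any faithful representation; hence $k(V)^{G_\ell}$ is rational by Kuniyoshi's theorem and its unramified cohomology vanishes. If $\ell\neq\char(k)$ but $G_\ell$ is abelian, then $V/G_\ell$ is rational by Fischer's theorem, using that $k$ is algebraically closed. Together these settle the cyclic simple groups and every prime whose Sylow subgroup is abelian, in particular the large primes dividing $|G|$ exactly once. For the non-defining primes of a group of Lie type the Sylow subgroup is abelian and toral once $\ell\nmid|\mathcal W|$; for the remaining small $\ell$ it is an iterated wreath product $A\wr S$ governed by the Weyl group $\mathcal W$, and here I would argue inductively from the vanishing recalled above for $\mathcal W$ together with known rationality results for wreath-product quotients.

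The decisive case is the defining prime in mixed characteristic, $\ell=p\neq\char(k)$, where $G_\ell$ is a maximal unipotent subgroup $U=U(\F_q)$. Because general finite $p$-groups can have nonzero Bogomolov multiplier \cite{saltman}, \cite{bogomolov}, the vanishing here cannot be formal and must use the precise structure of $U$ furnished by the root-subgroup filtration and the Chevalley commutator relations. The strategy is to present $k(V)^U$ as an iterated extension whose successive layers are cut out by elementary abelian quotients of $U$, and to show that the residue of any stable class along a divisorial valuation is already detected on one such abelian layer, hence killed by the abelian case above. \emph{The hard part} is exactly this control of the residue maps against the commutator structure, uniformly across all Lie types and their twisted forms; this is the content developed in the body of the paper. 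Finally, the alternating groups are disposed of by the stable rationality of the quotients of their permutation representation, and the finitely many sporadic groups must be verified one by one, combining the detection lemma with explicit knowledge of their Sylow subgroups.
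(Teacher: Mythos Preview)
The statement you are attempting to prove is labelled a \emph{conjecture} in the paper, and the paper does not claim to prove it. What the paper actually establishes is partial: Theorem~\ref{thm:stab-rat} gives stable rationality (hence vanishing for all $\ell$) only over $k=\bar{\F}_p$ and only for types $A$ and $C$, via Lang's theorem combined with the $G$-affine structure of Proposition~\ref{prop:g-afff}; Theorem~\ref{thm:main-intr} gives vanishing over arbitrary $k$ only for $\ell\notin d(G)$, and the set $d(G)$ always contains the defining characteristic $p$ (and further small primes for exceptional types). Your proposal therefore aims well beyond the paper.

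The decisive gap is exactly where you assert that ``this is the content developed in the body of the paper'': the defining prime $\ell=p$ over a field $k$ with $\char(k)\neq p$. The paper does \emph{not} treat this case. Its handling of $\ell=p$ goes through only over $k=\bar{\F}_p$, where either Theorem~\ref{thm:char-p} applies directly or Lang's theorem (Theorem~\ref{thm:lang}) yields rationality of $G\backslash\mathsf{G}$; neither mechanism is available over, say, $k=\C$. There the $p$-Sylow is the unipotent radical $U(\F_q)$, a nonabelian $p$-group for which no rationality or residue-control argument is supplied here, and your sketched ``iterated abelian layers'' strategy is not carried out in the paper and is not known to succeed uniformly across Lie types.

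There are further gaps outside the Lie-type case. Your disposal of the alternating groups by ``stable rationality of the quotients of their permutation representation'' invokes a statement that is not a known theorem for general $n$; and the sporadic groups have not been verified for all $i$ and $\ell$. The pieces of your outline that \emph{are} correct---the Sylow detection lemma, the $\ell=\char(k)$ case, the abelian case, and the wreath-product analysis for non-defining primes---do appear in the paper (Section~\ref{sect:splitting} and Corollary~\ref{coro:gln}), but they yield only the partial results stated, not the full conjecture.
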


The $i=2$ case of this conjecture was proved for  
$G=\mathsf{PSL}_n(\F_q)$ and $k=\mathbb C$ 
in \cite{bmp} and for simple and  quasi-simple groups of Lie type in \cite{kun}.
Examples of functions fields with vanishing second
and nonvanishing third unramified cohomology were given in \cite{peyre-un}.
Here we prove that many of these cohomology groups vanish
for finite groups of Lie type, for $k=\bar{\F}_q$. In fact, we
prove the stable rationality of many associated quotients spaces.
Our main theorem is:

\begin{thm}
\label{thm:stab-rat}
Let $G$ be one of the following groups
$$
\mathsf{SL}_n(\F_q), \quad \mathsf{Sp}_{2n}(\F_q), 
$$
or their twisted forms
$$
{}^2\mathsf{SL}_n(\F_q), \quad {}^2\mathsf{Sp}_{2n}(\F_q).
$$
Let $V$ be a faithful representation of $G$ over $k=\bar{\F}_p$. 
Then the quotient of $V$ by $G$ is stably rational over $k$. 
\end{thm}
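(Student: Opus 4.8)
The plan is to prove stable rationality of $V/G$ by producing, over $k=\bar{\F}_p$, a faithful representation $V$ together with an explicit unirational parametrization of $V/G$ of degree prime to every relevant $\ell$ — in fact a birational parametrization, since stable rationality is the stronger claim we are asked for. The natural candidate for $V$ comes from the defining linear action of these classical groups: $\mathsf{SL}_n(\F_q)$ and $\mathsf{Sp}_{2n}(\F_q)$ sit inside the corresponding algebraic groups $\mathsf{SL}_n$ and $\mathsf{Sp}_{2n}$ over $k$, acting on the standard space $W=k^n$ (resp. $k^{2n}$). The key observation exploiting $k=\bar{\F}_p$ is that the finite group of $\F_q$-points acts on $W$ through a \emph{linearized Frobenius twist}: one can realize the action of $g\in G$ on an auxiliary vector space so that the quotient is controlled by the algebraic-group quotient, which is rational.

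Concretely, the approach I would carry out is the following. First I would reduce to a single convenient faithful $V$, using the fact stated in the excerpt that the vanishing of $H^i_{k,un}$ (and indeed stable rationality of $V/G$) is independent of the choice of faithful representation — so it suffices to exhibit \emph{one} good $V$. Second, I would take $V = W \oplus W$ (or several copies of the standard module) and study the quotient by $G$ via the classical first and second fundamental theorems of invariant theory for $\mathsf{SL}_n$ and $\mathsf{Sp}_{2n}$: the ring of invariants of the algebraic group on many copies of the standard module is generated by the obvious invariants (determinants of $n\times n$ minors for $\mathsf{SL}_n$; the symplectic pairings $\langle v_i,v_j\rangle$ for $\mathsf{Sp}_{2n}$), and the corresponding quotient varieties are rational. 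Third, and this is the heart of the matter, I would pass from the \emph{algebraic} group $\mathbf{G}$ to the \emph{finite} group $G=\mathbf{G}(\F_q)$ using the Lang–Steinberg theorem: the map $\mathbf{G}\to\mathbf{G}$, $x\mapsto x^{-1}F(x)$ (with $F$ the Frobenius) is surjective with fibers the $G$-orbits, which lets me present $V/G$ as birational to a quotient of $\mathbf{G}\times V$ that unwinds to a rational variety. The twisted forms ${}^2\mathsf{SL}_n$ and ${}^2\mathsf{Sp}_{2n}$ are handled identically, replacing $F$ by the appropriate twisted Frobenius $F'$; the Lang–Steinberg theorem applies verbatim to any Frobenius endomorphism of a connected algebraic group.

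I expect the main obstacle to be the descent step: turning the rationality of the \emph{algebraic-group} quotient into birationality of the \emph{finite-group} quotient in a way that is genuinely valid over $k=\bar{\F}_p$ and not merely at the level of counting or of generic fibers. The delicate point is that the Lang map trivializes the $G$-torsor only after a purely-inseparable or étale base change whose degree one must control; one must check that the resulting parametrization of $V/G$ has degree prime to $\ell$ (ideally degree one), which forces attention to the orders of the isogenies and centers involved, e.g.\ the $\gcd(n,q-1)$ appearing when passing between $\mathsf{SL}_n$ and $\mathsf{PGL}_n$. The symplectic case is cleaner because $\mathsf{Sp}_{2n}$ is its own simply-connected form with a small center, whereas for $\mathsf{SL}_n$ I would need to keep track of the central $\mu_n$ and verify it does not obstruct rationality. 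Once the Lang–Steinberg descent is set up carefully, the invariant-theoretic rationality of the ambient quotient should propagate, and stable rationality of $V/G$ over $\bar{\F}_p$ follows.
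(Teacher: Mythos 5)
Your overall architecture --- pick one good faithful $V$, linearize, and descend from the algebraic group to the finite group via Lang--Steinberg --- has the right shape and the same external inputs as the paper, but the step you flag as ``the heart of the matter'' is misdiagnosed, and the step that actually carries the weight is missing. The Lang map $\tau\colon x\mapsto F(x)^{-1}x$ presents no delicacy at all: its differential on the Lie algebra is the identity (since $dF=0$), it is separable, and it identifies $G\backslash\mathsf{G}$ with $\mathsf{G}$ as a variety on the nose. There is no degree to control, no inseparable base change, and the center and $\gcd(n,q-1)$ play no role; this is exactly the paper's Theorem~\ref{thm:lang}. The genuine content is elsewhere. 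To conclude $G\backslash V\sim G\backslash\mathsf{G}$ you need the right-$\mathsf{G}$-torsor $G\backslash(\mathsf{G}\times V)\to G\backslash V$ to have a rational section, equivalently a $G$-equivariant rational map $V\dashrightarrow\mathsf{G}$ where $G$ acts on $\mathsf{G}$ by left translation. Without it you only get the easy direction, namely that $G\backslash(\mathsf{G}\times V)$ is a vector bundle over $G\backslash\mathsf{G}$, which says nothing about $G\backslash V$. Your proposed substitute --- the fundamental theorems of invariant theory for $\mathsf{SL}_n$ and $\mathsf{Sp}_{2n}$ acting on copies of the standard module --- computes the quotient by the \emph{algebraic} group, which is a different object and does not feed into the descent.

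The paper supplies the missing step (Proposition~\ref{prop:g-afff}) by showing that $\mathsf{SL}_n$ and $\mathsf{Sp}_{2n}$, with the left-translation action of \emph{any} finite subgroup $G$, are $G$-affine: inside $\mathsf{M}_{n\times n}=\oplus_{i=1}^n V^{(i)}$ the projection of a matrix onto its first $j$ columns exhibits $\mathsf{G}$ as an iterated tower of $G$-equivariant affine bundles, because the defining equations ($\det(v_1,\ldots,v_n)=1$, resp.\ $\omega(v_i,v_{i'})=\delta_{i',n+i}$) are affine in the last column added; Hilbert~90 then linearizes the tower (Lemma~\ref{lemm:g-affine}), giving a $G$-equivariant birational isomorphism between $\mathsf{G}$ and a faithful representation. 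After that, Lang's theorem and the no-name lemma (Corollary~\ref{coro:twist}) finish: $G\backslash V\sim G\backslash\mathsf{G}\simeq\mathsf{G}$, which is rational over $\bar{\F}_p$. An alternative repair closer in spirit to your write-up would be to invoke that $\mathsf{SL}_n$ and $\mathsf{Sp}_{2n}$ are special groups, so the torsor above is generically trivial --- but some such argument must be made explicitly, and note that it fails for non-special groups such as $\mathsf{SO}_n$, which is why the paper only obtains a multisection of controlled degree there (Proposition~\ref{prop:so}). Your treatment of the twisted forms by replacing $F$ with the twisted Frobenius is correct and matches the paper.
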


\noindent
In particular, Conjecture~\ref{conj:main} holds in these cases
for $\ell\nmid q$ and $k=\bar{\F}_p$. 
Our main tool is a theorem of Lang which 
proves the rationality of certain quotient spaces over $\bar{\F}_p$.

\begin{thm}
\label{thm:2}
Let $\mathsf G$ be a semi-simple simply-connected 
Lie group defined over a finite field $\F_q$. 
Then the image
$$
H^i(\mathsf G(\F_q),\Z/\ell)\ra H_{k,s}(\mathsf G/\mathsf G(\F_q),\Z/\ell)
$$
is zero, for $k=\bar{\F}_q$, all $i>0$ and $\ell\nmid q$. 
\end{thm}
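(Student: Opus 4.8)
The plan is to realize the surjection $s\colon \G_K\to \mathsf G(\F_q)$ geometrically through the Lang isogeny and then to factor the pullback $s^*$ through the étale cohomology of the variety $\mathsf G$, where it dies for structural reasons. First I would invoke Lang's theorem: since $\mathsf G$ is connected, the Lang map $L\colon \mathsf G\to \mathsf G$, $g\mapsto g^{-1}F(g)$ with $F$ the $q$-power Frobenius, is a finite étale Galois cover with group $\mathsf G(\F_q)$ acting freely on the source by left translation; it is separable (its differential is $-\mathrm{id}$), so this works for every $\ell\neq p$. The map $L$ identifies $\mathsf G/\mathsf G(\F_q)\xrightarrow{\sim}\mathsf G$, so that $K=k(\mathsf G/\mathsf G(\F_q))=k(\mathsf G)$ and $s\colon \G_K\twoheadrightarrow \mathsf G(\F_q)$ is precisely the homomorphism classifying this cover.

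Next I would use that the Lang cover is the restriction to the generic point of a genuine $\mathsf G(\F_q)$-torsor over all of $\mathsf G$. Hence the classifying map $c\colon \mathsf G\to B\mathsf G(\F_q)$ yields a factorization $s^*=\eta^*\circ c^*$, where $c^*\colon H^i(\mathsf G(\F_q),\Z/\ell)\to H^i_{\mathrm{et}}(\mathsf G,\Z/\ell)$ is the characteristic-class map of the torsor and $\eta^*\colon H^i_{\mathrm{et}}(\mathsf G,\Z/\ell)\to H^i(\G_K,\Z/\ell)$ is restriction to the generic point $\eta$. It therefore suffices to prove that $\eta^*$ vanishes in every positive degree.

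The heart of the argument is thus to show $H^i_{\mathrm{et}}(\mathsf G,\Z/\ell)\to H^i(\G_K,\Z/\ell)$ is zero for $i>0$. Here I would exploit the Bruhat big cell $\Omega=U^-TU\cong \A^{N_1}\times T\times \A^{N_2}$, which is open dense in $\mathsf G$ and contains $\eta$; since $\A^1$ is $\ell$-cohomologically trivial for $\ell\neq p$, restriction to $\Omega$ gives an isomorphism $H^*_{\mathrm{et}}(\Omega)\cong H^*_{\mathrm{et}}(T)$ through which $\eta^*$ factors. It then remains to see that the maximal-torus restriction $H^i_{\mathrm{et}}(\mathsf G,\Z/\ell)\to H^i_{\mathrm{et}}(T,\Z/\ell)$ vanishes for $i>0$. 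For this I invoke the Hopf-algebra structure coming from multiplication $\mathsf G\times\mathsf G\to\mathsf G$: the connected Hopf algebra $H^*_{\mathrm{et}}(\mathsf G,\Z/\ell)$ is generated by primitive classes, all of degree $\geq 2$, because $\mathsf G$ is simply connected and hence $H^1_{\mathrm{et}}(\mathsf G,\Z/\ell)=0$ (indeed $\mathrm{Pic}(\mathsf G)=0$ and $\mathcal O^*(\mathsf G)=k^*$); on the other hand $H^*_{\mathrm{et}}(T,\Z/\ell)=\Lambda(y_1,\dots,y_r)$ has its primitives concentrated in degree $1$. Since $T\hookrightarrow \mathsf G$ is a homomorphism, the restriction is a Hopf-algebra map and so sends primitives to primitives; every generator, being of degree $\geq 2$, must map to $0$.

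The main obstacle is this last step, namely controlling the Hopf-algebra structure of $H^*_{\mathrm{et}}(\mathsf G,\Z/\ell)$ uniformly in $\ell$. For the groups of primary interest, of types $A_n$ and $C_n$, integral cohomology is torsion-free and $H^*_{\mathrm{et}}(\mathsf G,\Z/\ell)$ is an exterior algebra on odd generators for all $\ell\neq p$, so the primitive-degree argument is immediate; for a general semisimple simply-connected $\mathsf G$ one must allow truncated polynomial factors at small torsion primes, but the conclusion survives because the vanishing $H^1_{\mathrm{et}}(\mathsf G,\Z/\ell)=0$ still forces all primitive generators into degree $\geq 2$, which die on $T$. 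Combining the three steps gives $s^*=\eta^*\circ c^*=0$ in every positive degree, as claimed.
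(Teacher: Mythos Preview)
Your strategy coincides with the paper's: invoke Lang's isogeny to identify $\mathsf G(\F_q)\backslash\mathsf G$ with $\mathsf G$, factor $s^*$ through $H^*_{et}(\mathsf G,\Z/\ell)$, and then restrict to the big Bruhat cell $U^-TU^+\cong \A^N\times T$, reducing everything to the claim that $H^{>0}_{et}(\mathsf G,\Z/\ell)\to H^{>0}_{et}(T,\Z/\ell)$ vanishes. The only divergence is in this last step: the paper argues via the transgression for the $T$-torsor $\mathsf G\to\mathsf G/T$ and the identification $\mathrm{Pic}(\mathsf G/T)\cong X^*(T)$ (so that, for simply-connected $\mathsf G$, the image is governed by $\mathrm{Pic}(\mathsf G)=0$), whereas you invoke the Hopf-algebra structure directly. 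Both routes express the same fact, that $T$ is cohomologically ``contractible'' inside a simply-connected $\mathsf G$.

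One caution about your formulation: the assertion that $H^*_{et}(\mathsf G,\Z/\ell)$ is \emph{generated by primitive classes} is not obvious at the torsion primes you yourself flag --- the coproduct on $H^*_{et}(\mathsf G)$ need not be cocommutative, so one cannot simply appeal to Milnor--Moore. The fix is to argue on the target rather than the source: by induction on the degree $n$, if the restriction $r$ vanishes in degrees $1,\dots,n-1$, then for any $\alpha\in H^n_{et}(\mathsf G)$ the reduced part of $(r\otimes r)\Delta(\alpha)$ dies and $r(\alpha)$ is primitive in $\Lambda(y_1,\dots,y_r)$, hence lies in degree $1$; since $H^1_{et}(\mathsf G,\Z/\ell)=0$ forces $n\ge 2$, we get $r(\alpha)=0$. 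With this adjustment your argument is complete and equivalent to the paper's.
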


Combining this with results of Tits \cite{tits} we obtain the following:

\begin{thm}
\label{thm:main-intr}
Let $G$ be a finite quasi-simple group of 
Lie type over a finite field of characteristic $p$. 
Put
$$
d(G):=\left\{ \begin{array}{ll} \{p\} &  \text{ if } G \text{ is of type } 
A_n,B_n,C_n, D_n \text { or } G_2;\\
\{2,3,p\}    &  \text{ if } G \text{ is of type } F_4,E_6,E_7;\\
\{2,3,5, p\} & \text{ if } G \text{ is of type } E_8. 
\end{array}
\right.
$$
Then for all algebraically closed fields $k$ one has
$$
H^i_{k,un}(G,\Z/\ell)=0,\,\, 
\text{ for all }\,\, i>0, \,\, \text{ and all }\,\, \ell\notin d(G). \,\,
$$  
For $k=\bar{\F}_p$ the vanishing also holds for $\ell = p$. 
\end{thm}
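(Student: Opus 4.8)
The plan is to derive Theorem~\ref{thm:main-intr} from Theorem~\ref{thm:2} by a model-comparison argument, reducing in each case the unramified (indeed the full stable) cohomology of $G$ to the vanishing already available for the simply-connected group over $\bar{\F}_q$, and to isolate the primes in $d(G)$ as those at which a certain torsor fails to split. Since $H^i_{k,un}(G,\Z/\ell)\subset H^i_{k,s}(G,\Z/\ell)$, it suffices for $\ell\ne p$ to prove $H^i_{k,s}(G,\Z/\ell)=0$. Fix first the simply-connected case $G=\mathsf G(\F_q)$ and $k=\bar{\F}_q$. By Lang's theorem the Lang isogeny $g\mapsto g^{-1}F(g)$ exhibits $\mathsf G$ as a free $G$-variety with rational quotient $\mathsf G/G$, so for a faithful linear representation $V$ the no-name lemma makes $(\mathsf G\times V)/G$ a vector bundle over $\mathsf G/G$, hence rational, while its projection to $V/G$ realizes its generic fibre as the $\mathsf G$-torsor $T$ over $F:=k(V/G)$ obtained by pushing the versal $G$-torsor along $G\hookrightarrow\mathsf G$.

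Now the stable classes of $G$ computed in $F$ restrict, under $F\hookrightarrow F(T)=k((\mathsf G\times V)/G)$, to the stable classes of the Lang model, which vanish by Theorem~\ref{thm:2}. Thus it remains to see that this restriction is injective on $\ell$-torsion, for which it suffices that $T$ be split by a field extension of degree prime to $\ell$ (a point of $T$ over such an extension yields a place of $F(T)$ through which the restriction factors, and a transfer argument gives injectivity). For $\ell\ne p$ not a torsion prime of $\mathsf G$ every $\mathsf G$-torsor admits such a splitting, by Serre's and Tits's results \cite{tits}; this already gives $H^i_{\bar{\F}_q,s}(\mathsf G(\F_q),\Z/\ell)=0$ away from the torsion primes and $p$, and for the classical types and $G_2$ a sharper analysis of the specific torsor $T$ (again using \cite{tits}) removes the torsion-prime condition, producing $d(G)=\{p\}$ while leaving the genuine torsion primes in $d(G)$ for $F_4,E_6,E_7,E_8$. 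The twisted forms are treated identically: Lang's theorem holds for any Steinberg endomorphism, and $\mathsf G$ stays rational.

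Two further reductions complete the case $\ell\ne p$. A general quasi-simple group is a central quotient $\mathsf G_{sc}(\F_q)/Z$, and I would compare its mod-$\ell$ cohomology with that of $\mathsf G_{sc}(\F_q)$ through the central isogeny; the identification of $H^*(G,\Z/\ell)$ with the cohomology controlled by the algebraic group, valid for $\ell\notin d(G)$ by \cite{tits}, transports the vanishing across this isogeny and, being compatible with base change in $k$ for $\ell\ne\char k$, extends it from $\bar{\F}_q$ to every algebraically closed field $k$. Finally, for $\ell=p$ and $k=\bar{\F}_p$ Theorem~\ref{thm:2} does not apply, and I would argue directly: a Sylow $p$-subgroup $U$ of $G$ is unipotent, a faithful representation of a finite $p$-group in characteristic $p$ has rational quotient (the action is triangular, so $V\to V/U$ is an iterated tower of $\mathbb{G}_a$-fibrations), whence $H^i_{k,un}(U,\Z/p)=0$, and since $[G:U]$ is prime to $p$ a restriction-transfer argument yields $H^i_{k,un}(G,\Z/p)=0$.

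The main obstacle is the torsion-prime analysis underlying the set $d(G)$: bounding the primes $\ell$ at which the $\mathsf G$-torsor $T$ arising from $G=\mathsf G^F$ fails to be split by a prime-to-$\ell$ extension, and, in tandem, making precise via \cite{tits} the comparison between the mod-$\ell$ cohomology of the finite quasi-simple group and that of the ambient algebraic group across the central isogeny and across all fields $k$. It is exactly the breakdown of this splitting at the torsion primes of $F_4,E_6,E_7,E_8$ that forces the enlarged sets $d(G)$, whereas the extra rigidity of the classical types and of $G_2$ lets one reach every $\ell\nmid q$.
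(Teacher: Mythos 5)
Your overall strategy --- Lang's theorem to make $G\ba\mathsf G$ rational, Theorem~\ref{thm:2} to kill stable classes on the Lang model, a transfer through a prime-to-$\ell$ splitting field of the $\mathsf G$-torsor $T$ over $k(V)^G$, and Tits's bounds to control the splitting degree --- is the same as the outline the paper gestures at (``combining this with results of Tits''), and for types $A$ and $C$ it closes cleanly: there $T$ is automatically trivial, since $H^1(F,\mathsf{SL}_n)=H^1(F,\mathsf{Sp}_{2n})=1$, so no transfer is even needed. (The paper instead proves that $\mathsf{SL}_n$ and $\mathsf{Sp}_{2n}$ are $G$-affine in Proposition~\ref{prop:g-afff}, which yields the same conclusion together with stable rationality.) The genuine gap is at $\ell=2$ for types $B_n$, $D_n$ and $G_2$, where the theorem nevertheless asserts $d(G)=\{p\}$: here $2$ \emph{is} a torsion prime, Tits's general bounds only split $\mathsf{Spin}$- and $G_2$-torsors over extensions of $2$-power degree, and the ``sharper analysis of the specific torsor $T$'' that you defer to is precisely the missing content. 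The paper's own substitute for these types (Proposition~\ref{prop:so}) likewise only produces a comparison morphism of degree a power of $2$, and accordingly its Theorem~\ref{thm:mainn} keeps $2$ in the excluded set for types $C$ and $D$; so your claim that the rigidity of the classical types and of $G_2$ ``lets one reach every $\ell\nmid q$'' is not established by anything you have written.

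Two further steps are asserted rather than proved. First, the descent from the simply connected group to an arbitrary quasi-simple central quotient $\mathsf G_{sc}(\F_q)/Z$: functoriality gives a map $H^i_{k,s}(G,\Z/\ell)\to H^i_{k,s}(\mathsf G_{sc}(\F_q),\Z/\ell)$, but vanishing of the target does not imply vanishing of the source (a class may die only after pullback to a central extension), so ``transporting the vanishing across the isogeny'' needs an actual argument; the paper's route here is different, passing through the big Bruhat cell $\mathsf U^-\mathsf T\mathsf U^+$ and the covering torus $\tilde{\mathsf T}$ (Theorem~\ref{thm:mainn-2} and the corollaries following it). Second, the passage from $k=\bar{\F}_q$ to every algebraically closed field is not mere ``compatibility with base change'': $H^i_{k,un}$ depends a priori on $k$, Lang's theorem is unavailable outside characteristic $p$, and an injectivity-of-specialization argument (or a characteristic-free construction) must be supplied. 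Your treatment of $\ell=p$ over $\bar{\F}_p$ via unipotent Sylow subgroups and rationality of triangular quotients is correct, though the paper obtains it more cheaply from the fact that the $p$-cohomological dimension of $\G_K$ is at most $1$ in characteristic $p$ (Theorem~\ref{thm:char-p}).
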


\

Here is the roadmap of the paper. In Section~\ref{sect:gen} we study the birational
type of quotients $G\ba\mathsf{G}/H$, where $\mathsf G$ is an algebraic group
over an algebraically closed field $k$ and $G,H\subset \mathsf{G}(k)$
are finite subgroups, acting on $\mathsf{G}$ by 
translations on the left, resp. on the right.
In Section~\ref{sect:class} we study the classical groups. 
In Section~\ref{sect:coho} we introduce stable and unramified cohomology over 
arbitrary algebraically closed fields and prove their basic properties. 
In Section~\ref{sect:gen-van} we establish general vanishing results, 
applying theorems of Lang and Tits. 
In Section~\ref{sect:splitting} we sketch another approach which is using
the structure of Sylow subgroups of quasi-simple groups
of Lie type. As an example we prove the triviality
of unramified cohomology over $\C$ for all groups $\GL_n(\mathbb{F}_q)$
and $\ell$ coprime to $q$.

\

\noindent
{\bf Acknowledgments:} The first author was supported by NSF grant
DMS-0701578 and the third author by NSF grants DMS-0554280 and DMS-0602333.


\section{Equivariant birational geometry}
\label{sect:gen}

We work over an algebraically closed field $k$. 
We say that $k$-varieties $X$ and $Y$ are stably birational, and write 
$X\sim Y$, if $X\times \A^n$ is birational to $Y\times \A^m$, 
for some $n,m\in \mathbb N$.

\

Let $G$ be an algebraic group and $X$ an algebraic variety over $k$, 
with a $G$-action
$$
\lambda\,:\, G\times X\ra X.
$$ 
We will sometimes consider different actions of the same group. 
To emphasize the action we will write $\lambda(G)\ba X$ for 
the quotient of $X$ by the $\lambda$-action of $G$;
we write  $G\ba X$, when the action is clear from the context.

\

We say that the action of $G$
is {\em almost free} if 
there exists a Zariski open subset $X^{\circ}\subset X$
on which the action is free. In particular, the quotient map 
$X\ra G\ba X$ is separable.

\begin{exam}
\label{exam:basic}
Let $V$ be a faithful complex representation of $G$. 
Then $G$ acts almost freely on $V$. 
\end{exam}

\begin{lemm}
\label{lemm:openU}
Let $G$ be a finite group and $V$ a faithful representation of $G$ over an 
algebraically closed field $k$. 
Let $Y$ be an affine variety over $k$, with a free $G$-action, and 
$y\in Y(k)$ a point. 

For every Zariski open  $U\subset V$ there exist 
a $G$-equivariant $k$-morphism $\phi_U\,:\, Y\ra V$
and a Zariski open $G$-invariant subset $Y^{\circ}\subset Y$ such that 
\begin{itemize}
\item $y\in Y^{\circ}(k)$;
\item $\phi_U(Y^{\circ})\subset U$.
\end{itemize}
\end{lemm}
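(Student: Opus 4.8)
The plan is to build the equivariant morphism by a characteristic-free summation, to arrange that it carries $y$ into the largest $G$-invariant open subset of $U$, and then to take $Y^{\circ}$ to be the preimage of that subset. Write $\rho\colon G\to \GL(V)$ for the given faithful representation. The first step is to exhibit a supply of $G$-equivariant morphisms $Y\to V$: for $h\in\cO(Y)$ and $v\in V$ set
$$
\Phi_{h,v}(z):=\sum_{g\in G} h(g^{-1}z)\,\rho(g)v .
$$
The substitution $g\mapsto g_0g$ shows $\Phi_{h,v}(g_0z)=\rho(g_0)\Phi_{h,v}(z)$, so $\Phi_{h,v}$ is $G$-equivariant for every $h$ and $v$. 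Note that this uses an unnormalized sum rather than an average, so no division by $\abs{G}$ is required and the construction is valid in arbitrary characteristic.

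The second step is to replace $U$ by its largest $G$-invariant open subset
$$
U^{\circ}:=\bigcap_{g\in G}\rho(g)U .
$$
Since $V$ is irreducible and $U$ is nonempty, each $\rho(g)U$ is open and dense, so $U^{\circ}$ is a nonempty $G$-invariant open set with $U^{\circ}\subset U$. The point of passing to $U^{\circ}$ is that the preimage of a $G$-invariant set under a $G$-equivariant morphism is again $G$-invariant, which is exactly what is needed to make $Y^{\circ}$ invariant; the preimage of $U$ itself would in general fail to be invariant.

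The final step uses freeness together with affineness to control the value at $y$. Because the action is free, the orbit $\{g^{-1}y : g\in G\}$ consists of $\abs{G}$ distinct closed points of the affine variety $Y$; the associated maximal ideals are pairwise comaximal, so by the Chinese remainder theorem there is $h\in\cO(Y)$ with $h(y)=1$ and $h(g^{-1}y)=0$ for $g\neq e$. For such an $h$ only the term $g=e$ survives and $\Phi_{h,v}(y)=v$. Choosing any $v\in U^{\circ}$ and putting $\phi_U:=\Phi_{h,v}$ and $Y^{\circ}:=\phi_U^{-1}(U^{\circ})$ then yields all the assertions: $Y^{\circ}$ is open and $G$-invariant, $y\in Y^{\circ}$ since $\phi_U(y)=v\in U^{\circ}$, and $\phi_U(Y^{\circ})\subset U^{\circ}\subset U$.

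The step I expect to be the only genuine subtlety is the insistence on a characteristic-free construction, since the intended application is over $k=\bar{\F}_p$ where averaging over $G$ is unavailable; this is handled by the summation above. The separate requirement that $Y^{\circ}$ be $G$-invariant is handled by working with $U^{\circ}$ in place of $U$.
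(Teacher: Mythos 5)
Your proof is correct and follows essentially the same route as the paper: the paper reduces to the regular representation $\A_k^{|G|}$ and invokes ``functions separate points,'' and your map $\Phi_{h,v}(z)=\sum_{g\in G}h(g^{-1}z)\rho(g)v$ is exactly that construction composed with the equivariant linear map $\A_k^{|G|}\to V$, $(a_g)\mapsto\sum_g a_g\rho(g)v$. You usefully make explicit two points the paper leaves implicit, namely the characteristic-free (unnormalized) summation and the passage to $U^{\circ}=\bigcap_{g}\rho(g)U$ to guarantee that $Y^{\circ}$ is $G$-invariant.
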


\begin{proof}
It suffices to consider $V:=\A_k^{|G|}$, the affine $k$-space, with the 
induced faithful $G$-action. For any divisor $D\subset V$
there exists a Zariski open subset $U\subset V$ such that for every 
point $v\in U(k)$ its $G$-orbit $G\cdot v\notin D$. 
For every Zariski open  $U\subset V$ 
there exists a $G$-equivariant $k$-morphism $\phi_U\,:\, Y\ra V$ such that $\phi_U(y)\in U(k)$
(functions separate points). 
This implies the existence of a Zariski open $G$-invariant subset $Y^{\circ}\subset Y$
with the claimed properties.
 \end{proof}

A $G$-variety $X$ is called {\em $G$-affine}, and the corresponding action {\em affine}, 
if there exists a $G$-equivariant birational isomorphism between $X$ and a faithful 
representation of $G$.
Let $\cV$ and $X$ be affine $G$-varieties. A $G$-morphism 
$\pi\,:\, \mathcal V\ra X$ is called an {\em affine $G$-bundle} if
it is an affine bundle over some open subset $X^{\circ}\subset X$
and the $G$-action is compatible with this structure of an affine bundle.

By Hilbert 90, an affine $G$-bundle $\cV\ra X$ is $G$-birational to a
finite dimensional $G$-representation over the function field of 
$K=k(X)$, compatible with the given $G$-action on $K$. 
A morphism $\rho\,:\, X\ra B$ of $G$-varieties will be called a 
$G$-ruling (and $X$ - $G$-ruled) over $B$ if there exists a finite set of 
affine $G$-varieties 
$$
X_n=X,B_{n-1},X_{n-1},B_{n-2},X_{n-2},\ldots, X_1,B_0=B
$$ 
such that 
$X_i\ra B_{i-1}$ is an affine $G$-bundle
and $B_i\subset X_i$ a $G$-stable Zariski open subset, for $i=1,\ldots, n$.

\begin{lemm}
\label{lemm:g-affine}
Assume that $\rho \,:\, X\ra B$ is a $G$-ruling over $B$ and 
that the action of $G$ on $X$ is almost free. Then 
$X$ is $G$-affine. 
\end{lemm}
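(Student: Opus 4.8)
The plan is to argue by induction on the length $n$ of the $G$-ruling
$$
X_n=X,\ B_{n-1},\ X_{n-1},\ \ldots,\ X_1,\ B_0=B,
$$
reducing the statement to the behaviour of a single affine $G$-bundle. Two ingredients are in play: the almost free action, given on the top variety $X=X_n$, and the fact that the bottom $B=B_0$ of the ruling is $G$-affine. The base of the induction is $B$ itself. For the inductive step it suffices to prove the following: if $B'$ is $G$-affine and carries an almost free $G$-action, and $\pi\,:\,X'\ra B'$ is an affine $G$-bundle, then $X'$ is again $G$-affine. Passing to a $G$-stable Zariski open subset $B_i\subset X_i$ changes neither the $G$-equivariant birational type nor the almost freeness of the action, so the open-restriction steps in the ruling are harmless and may be ignored throughout.

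For the single affine-bundle step I would proceed as follows. By Hilbert~90, as recorded above, the affine $G$-bundle $\pi\,:\,X'\ra B'$ is $G$-equivariantly birational to a finite-dimensional $G$-representation over $K=k(B')$, that is, to a $G$-vector bundle $E\ra B'$ of some rank $r$. Since the $G$-action on $B'$ is almost free, the no-name lemma applies and shows that $E$ is $G$-equivariantly birational to $B'\times\A^r$, with $G$ acting trivially on the factor $\A^r$. Because $B'$ is $G$-affine it is $G$-equivariantly birational to a faithful representation $V$ of $G$, and hence $X'$ is $G$-equivariantly birational to $V\times\A^r$ with trivial $G$-action on $\A^r$. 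The latter is itself a faithful representation of $G$, since $V$ is; therefore $X'$ is $G$-affine, completing the inductive step.

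It remains to guarantee the almost freeness needed at each level, and this is the point that requires care and where the stated hypothesis enters. The argument above needs an almost free action on every base $B_{i-1}$, whereas the hypothesis supplies it only on $X=X_n$. I would propagate the property downward: if $G$ acts almost freely on the total space of an affine $G$-bundle $X_i\ra B_{i-1}$, then it already acts almost freely on $B_{i-1}$. Indeed, for a point $b\in B_{i-1}$ the stabilizer $G_b$ acts by affine transformations on the affine-space fibre over $b$, and a finite group acting affinely on an affine space over an algebraically closed field always has a fixed point; a nontrivial $G_b$ would therefore produce a whole fibre of points of $X_i$ with nontrivial stabilizer, contradicting almost freeness on $X_i$. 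Together with the fact that an almost free action restricts to an almost free action on any $G$-stable Zariski open subset, this shows that almost freeness descends through the entire tower down to $B$, so that the no-name lemma may legitimately be invoked at every stage of the upward induction. The main obstacle is precisely this interplay: ensuring that the almost free hypothesis is available exactly where the no-name lemma is applied. The linearization of the affine bundles is handed to us by Hilbert~90, and once the almost free condition is secured at each base the remainder is the routine bookkeeping of the induction.
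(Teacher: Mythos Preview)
Your last paragraph contains a genuine error: almost freeness on the total space of an affine $G$-bundle does \emph{not} descend to the base. Take $G=\Z/2$ acting on $\A^2$ by $(x,y)\mapsto(x,-y)$ and project to the first coordinate; the action upstairs is almost free (free on $\{y\neq 0\}$), yet the action on the base $\A^1$ is trivial. Your reasoning fails at ``a nontrivial $G_b$ would therefore produce a whole fibre of points of $X_i$ with nontrivial stabilizer'': a finite group acting affinely on an affine space over $k$ has a fixed \emph{point}, not a fixed fibre, and a closed fixed locus of positive codimension in $X_i$ is entirely compatible with almost freeness there.

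The good news is that the descent is unnecessary for your scheme. Once your inductive hypothesis says $B_{i-1}$ is $G$-affine, it is $G$-birational to a faithful representation, and a faithful representation of a finite group over an algebraically closed field automatically carries an almost free action (the union of the fixed loci $V^g$, $g\neq e$, is a finite union of proper linear subspaces); so the no-name lemma applies with no further input. What remains is your base case: you assert that $B=B_0$ is $G$-affine, but the lemma does not assume this, and in the paper's sole application (Proposition~\ref{prop:g-afff}) $B$ is a single point, which is not $G$-affine for nontrivial $G$. The clean repair is to run the induction with the weaker invariant ``$X_i$ is $G$-birational to a (possibly non-faithful) $G$-representation over $k$'': when $B$ is a point this holds for $X_1$ by the Hilbert~90 statement recorded just before the lemma, the inductive step goes through as before, and at the top the almost-free hypothesis on $X=X_n$ forces the representation to be faithful. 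The paper's own proof is literally the phrase ``Follows from Hilbert 90,'' so after these adjustments your expansion follows the intended line, only made explicit.
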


\begin{proof}
Follows from Hilbert 90. 
 \end{proof}

Let $X,Y$ be smooth varieties with an almost free action of $G$. 
We write $X\stackrel{G}{\leadsto} Y$ if there exist a
$G$-representation $V$, a Zariski open $G$-stable
subset $X^{\circ}\subset X$ and a $G$-morphism (not necessarily dominant)
$\beta\,:\, X^{\circ}\times V\ra Y$.  We write $X\stackrel{G}{\leftrightsquigarrow} Y$, 
and say that the $G$-actions are equivalent, 
if $X\stackrel{G}{\leadsto} Y$ and $Y\stackrel{G}{\leadsto} X$. 

\begin{lemm}
\label{lemm:tran}
If $X\stackrel{G}{\leadsto} Y$, then the morphism 
$$
\beta\,:\, G\ba (X\times Y)\ra G\ba X
$$
has a rational section. 
\end{lemm}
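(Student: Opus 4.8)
The plan is to produce the section by descending an explicit equivariant map, after making the following reduction. Write $\beta$ for the projection $G\ba(X\times Y)\ra G\ba X$. A rational section of $\beta$ is the same datum as a $G$-equivariant rational map $X\dashrightarrow X\times Y$ lifting the identity on $X$: since the action of $G$ on $X$ is almost free, the quotient map $X\ra G\ba X$ is a $G$-torsor over a dense open subset, and over that locus a $G$-equivariant rational map descends to a rational map of quotient varieties. A lift of $\mathrm{id}_X$ is precisely the graph $x\mapsto(x,g(x))$ of a $G$-equivariant rational map $g\,:\,X\dashrightarrow Y$. So it suffices to produce one single $G$-equivariant rational map $X\dashrightarrow Y$.

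To build $g$ I would use the hypothesis $X\stackrel{G}{\leadsto}Y$ directly. It furnishes a $G$-representation $V$, a $G$-stable Zariski open $X^\circ\subset X$, and a $G$-morphism $\phi\,:\,X^\circ\times V\ra Y$. The key observation is that the origin $0\in V$ is a $G$-fixed point, so that restricting $\phi$ to the zero section collapses the representation factor: set $g:=\phi(\,\cdot\,,0)\,:\,X^\circ\ra Y$. For $h\in G$ one has $g(hx)=\phi(hx,0)=\phi(hx,h\cdot 0)=h\,\phi(x,0)=h\,g(x)$, so $g$ is again $G$-equivariant. Thus $g$ is a $G$-equivariant morphism defined on $X^\circ$, in particular a $G$-equivariant rational map $X\dashrightarrow Y$. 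Note that dominance of $\phi$ plays no role here, which is exactly why the definition of $\stackrel{G}{\leadsto}$ is allowed to permit non-dominant $\phi$.

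With $g$ in hand I form the graph $\gamma\,:\,X^\circ\dashrightarrow X\times Y$, $x\mapsto(x,g(x))$, which is $G$-equivariant for the diagonal action on $X\times Y$ and lifts the open inclusion $X^\circ\hookrightarrow X$. Restricting to the dense open locus on which the $G$-action on $X$ is free, $\gamma$ sends $G$-orbits to $G$-orbits and therefore descends to a rational map $\sigma\,:\,G\ba X\dashrightarrow G\ba(X\times Y)$. Because the composite $X^\circ\xrightarrow{\gamma}X\times Y\xrightarrow{\mathrm{pr}_X}X$ is the inclusion, passing to quotients gives $\beta\circ\sigma=\mathrm{id}$ on a dense open subset of $G\ba X$, so $\sigma$ is the required rational section.

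The only technically delicate point is the descent in the first paragraph: one must check that over the open subset where $X\ra G\ba X$ is an honest $G$-torsor the equivariant graph $\gamma$ induces a well-defined morphism between the quotients, and that the induced composite with $\beta$ is genuinely the identity on a dense open subset rather than merely on a single orbit. This is routine given the almost freeness hypothesis (which guarantees the generic torsor structure, and separability of the quotient map as recorded after the definition of almost free actions), so I expect no serious obstacle. The actual content of the statement is the elementary idea of eliminating the auxiliary representation $V$ by evaluating $\phi$ at its $G$-fixed point $0$.
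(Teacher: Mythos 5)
Your proof is correct and follows essentially the same route as the paper's: both arguments descend the graph of a $G$-equivariant map $X^{\circ}\dashrightarrow Y$ extracted from the datum $\phi\,:\,X^{\circ}\times V\ra Y$. The paper carries the factor $V$ along to $G\ba (X\times V\times Y)$ and projects it away at the end, whereas you eliminate it at the outset by evaluating at the $G$-fixed point $0\in V$; this is a harmless streamlining that makes explicit what the paper leaves implicit.
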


\begin{proof}
Consider the morphism 
$$
\beta'\,:\, G\backslash (X\times V \times Y)\ra G\backslash X,
$$
where $G$ acts diagonally. The graph of the map 
$X\ra (Y\times V)$ is $G$-stable and gives
a section of $\beta'$. The projection of this section 
to $G\ba (X\times Y)$ is a section of $\beta$. 
 \end{proof}


\

\begin{lemm}
\label{lemm:lie}
Let $\mathsf{G}$ be a Lie group over an algebraically closed field 
$k$. Let $G\subset \mathsf G(k)$ be a finite subgroup. 
Let $X$ be an algebraic variety over $k$ with an almost free
action of $G$. Assume that $X\stackrel{G}{\leadsto} \mathsf{G}$, 
where $\mathsf{G}$ is considered as a $G$-variety, with 
a left action. 
Then 
$$
G\ba (X\times \mathsf{G}) \sim  G\ba X.
$$ 
\end{lemm}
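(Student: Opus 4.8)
The plan is to recognize the projection $\pi\,:\,G\ba(X\times\mathsf G)\to G\ba X$ as a birationally trivial principal $\mathsf G$-bundle, and then to invoke the rationality of $\mathsf G$; the trivialization will come for free from Lemma~\ref{lemm:tran}. First I would record an extra symmetry. Since $G$ acts on $\mathsf G$ by left translations, and left translations commute with right translations, the group $\mathsf G$ acts on $X\times\mathsf G$ by $(x,h)\cdot\mathsf g:=(x,h\mathsf g)$, trivially on the first factor, and this right action commutes with the diagonal left $G$-action. Hence it descends to an action of $\mathsf G$ on $G\ba(X\times\mathsf G)$ along the fibres of $\pi$. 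Because the left $G$-action on $\mathsf G$ is free, over a point $[x]\in G\ba X$ at which $G$ acts freely on the orbit of $x$ each $G$-orbit in $(G\cdot x)\times\mathsf G$ contains a unique representative of the form $(x,h)$; thus the fibre of $\pi$ over $[x]$ is identified with $\mathsf G$, on which $\mathsf G$ acts simply transitively by right translation. In other words, $\pi$ is a $\mathsf G$-torsor over a Zariski open subset of $G\ba X$, equivalently a $\mathsf G$-torsor over the function field $k(G\ba X)$.

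Next I would produce a trivialization of this torsor. By hypothesis $X\stackrel{G}{\leadsto}\mathsf G$, so Lemma~\ref{lemm:tran} applied with $Y=\mathsf G$ supplies a rational section $\sigma$ of $\pi$. A $\mathsf G$-torsor possessing a rational point is trivial: the assignment $(b,\mathsf g)\mapsto \sigma(b)\cdot\mathsf g$ yields a birational isomorphism
$$
(G\ba X)\times\mathsf G \;\cong\; G\ba(X\times\mathsf G).
$$
It is exactly the appeal to Lemma~\ref{lemm:tran} that makes the section, and hence this trivialization, available without any specialness hypothesis on $\mathsf G$.

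Finally, $\mathsf G$ is rational as a $k$-variety (as holds for the connected reductive groups under consideration, by the Bruhat decomposition giving an open cell isomorphic to affine space). Therefore $(G\ba X)\times\mathsf G\sim G\ba X$, and combining with the displayed birational isomorphism gives $G\ba(X\times\mathsf G)\sim G\ba X$, as claimed. The only genuinely delicate step is the first one: verifying that the residual right $\mathsf G$-action descends and acts generically simply transitively on the fibres, so that $\pi$ really is a $\mathsf G$-torsor over $k(G\ba X)$. Once that is in place the argument is formal, since Lemma~\ref{lemm:tran} converts the hypothesis $X\stackrel{G}{\leadsto}\mathsf G$ directly into the rational point needed to split the torsor.
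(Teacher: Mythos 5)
Your proof is correct and follows essentially the same route as the paper: both use Lemma~\ref{lemm:tran} to obtain a rational section of the projection $G\ba(X\times\mathsf G)\to G\ba X$, observe that the residual right $\mathsf G$-action makes this map a principal homogeneous space (so the section trivializes it birationally to $(G\ba X)\times\mathsf G$), and conclude by the rationality of $\mathsf G$ over $k$. Your write-up is in fact somewhat more careful than the paper's, since you verify explicitly that the right action descends and acts generically simply transitively on the fibres.
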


\begin{proof}
By Lemma~\ref{lemm:tran}, there is a Zariski open 
$G$-stable subset $X^{\circ}\subset X$ so that the $G$-morphism
(projection to the first factor)
$$
\beta\,:\, G\ba (X\times \mathsf{G}) \ra (G \ba X) 
$$
has a section. 
We also have a {\em right} action of $\mathsf{G}$, 
which preserves the fibration structure given by $\beta$. 
Thus it is a principal homogeneous space over $G\ba X^{\circ}$, 
for some $G$-stable Zariski open $X^{\circ}\subset G$, with a section.
Hence it is birational to $(G\ba X)\times \mathsf{G}$. 
It suffices to recall that $\mathsf{G}$ is rational over $k$.
 \end{proof}

Let $\mathsf{G}$ be a connected algebraic group and  
$F\in \Aut_k(\mathsf{G})$ a $k$-automorphism of $\mathsf{G}$. 
Let $G\subset \mathsf{G}(k)$ be a finite subgroup, with a natural
left action 
$$
\begin{array}{rccc}
\lambda \,:   & G\times \mathsf{G} & \ra &  \mathsf{G} \\
               & (\gamma,g) & \mapsto & \gamma \cdot g 
\end{array} 
$$
We also have an $F$-twisted right action 
$$
\begin{array}{rccc}
\rho^F \,:  & G\times \mathsf{G} & \ra     &  \mathsf{G} \\
                & (\gamma,g)         & \mapsto &  g \cdot F(\gamma^{-1}) 
\end{array} 
$$
and an $F$-conjugation
$$
\begin{array}{rccc}
\kappa^F \,:  & G\times \mathsf{G} & \ra     &  \mathsf{G} \\
                & (\gamma,g)         & \mapsto & \gamma \cdot g \cdot F(\gamma^{-1}). 
\end{array} 
$$

\begin{lemm}
\label{lemm:twist}
Assume that $G\subset \mathsf{G}(k)$ has the following properties:
\begin{itemize}
\item[(1)] there exists a faithful $G$-representation $V$
such that $V\stackrel{G}{\leadsto} \mathsf{G}$, 
where $G$ acts on $\mathsf{G}$ via $\lambda$;
\item[(2)] the twisted action $\rho^F$ on $\mathsf{G}$ is almost free.
\end{itemize}  
Then the quotient of $\mathsf G$ by the $F$-twisted conjugation $\kappa^F$ 
of $G$ is stably birational to the quotient of $\mathsf G$ by $\lambda$.  
\end{lemm}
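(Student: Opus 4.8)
The plan is to \emph{untwist} the $F$-twisted conjugation $\kappa^F$ into the ordinary left translation $\lambda$ at the cost of an auxiliary representation, and then to remove that representation on both sides by Hilbert~90. By hypothesis~(1) the relation $V\stackrel{G}{\leadsto}\mathsf{G}$ (with $\mathsf{G}$ carrying the $\lambda$-action) provides a faithful representation $E$ of $G$ together with a $G$-equivariant rational map $\beta\,:\,E\dashrightarrow\mathsf{G}$ satisfying $\beta(\gamma x)=\gamma\cdot\beta(x)$; concretely one takes $E=V\oplus W$, where $W$ is the representation occurring in the definition of $\stackrel{G}{\leadsto}$ and $\beta$ is the given $G$-morphism. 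The point I would exploit is that $t:=F\circ\beta\,:\,E\dashrightarrow\mathsf{G}$ then satisfies the \emph{twisted} equivariance $t(\gamma x)=F(\gamma)\cdot t(x)$, since $F$ is a homomorphism.

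Using $t$ I would introduce the birational self-map
\[
\psi\,:\,E\times\mathsf{G}\dashrightarrow E\times\mathsf{G},\qquad \psi(x,g)=(x,\;g\cdot t(x)),
\]
with inverse $(x,h)\mapsto (x,h\cdot t(x)^{-1})$. Equip $E\times\mathsf{G}$ with two diagonal $G$-actions: the action $a$, by the representation on $E$ and by $\lambda$ on $\mathsf{G}$, and the action $c$, by the representation on $E$ and by $\kappa^F$ on $\mathsf{G}$. The twisted equivariance of $t$ is exactly what cancels the right factor $F(\gamma^{-1})$: applying $\psi$ to the $c$-translate $(\gamma x,\gamma g F(\gamma^{-1}))$ of $(x,g)$ gives $(\gamma x,\,\gamma g F(\gamma^{-1})F(\gamma)t(x))=(\gamma x,\,\gamma\cdot(g\,t(x)))$, which is the $a$-translate of $\psi(x,g)$. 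Thus $\psi$ intertwines $c$ with $a$ and descends to a birational isomorphism $c(G)\ba(E\times\mathsf{G})\cong a(G)\ba(E\times\mathsf{G})$.

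It then remains to strip off the factor $E$ on each side. Since the $\lambda$-action on $\mathsf{G}$ is free and, by hypothesis~(2), the $\kappa^F$-action is almost free, the projection $E\times\mathsf{G}\ra\mathsf{G}$ is, over a dense open subset, $G$-equivariant with $\mathsf{G}$ carrying a free $G$-action; the quotient is therefore the vector bundle associated to a $G$-torsor, and by Hilbert~90 (the no-name lemma) it is birational to the product of the base with $\A^{\dim E}$, hence stably birational to that base. On the $a$-side this yields $a(G)\ba(E\times\mathsf{G})\sim\lambda(G)\ba\mathsf{G}$, and on the $c$-side $c(G)\ba(E\times\mathsf{G})\sim\kappa^F(G)\ba\mathsf{G}$. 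Combining these with the isomorphism coming from $\psi$ gives $\kappa^F(G)\ba\mathsf{G}\sim\lambda(G)\ba\mathsf{G}$, which is the assertion.

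The main obstacle is the construction of the intertwining map $\psi$: one must produce a rational map $E\dashrightarrow\mathsf{G}$ with the precise twisted equivariance $t(\gamma x)=F(\gamma)t(x)$ needed to absorb $F(\gamma^{-1})$, and it is exactly here that hypothesis~(1) enters, through $t=F\circ\beta$. The only other delicate point is the descent on the twisted side, where the passage from $E\times\mathsf{G}$ to $\mathsf{G}$ via Hilbert~90 requires the $\kappa^F$-action to be almost free; this is the role of hypothesis~(2).
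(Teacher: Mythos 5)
Your argument is correct, and it reaches the conclusion by a more direct route than the paper. The paper works with the double quotient $(\lambda,\rho^F)(G)\ba(\mathsf{G}\times \mathsf{G})$: it exhibits the twisted antidiagonal $\Delta^F=\{(g,F^{-1}(g))\}$ as a section of the principal $\mathsf{G}$-fibration over $\rho^F(G)\ba \mathsf{G}$, identifies $\rho^F(G)\ba \mathsf{G}$ with $\lambda(G)\ba \mathsf{G}$, and then compares the same double quotient with $\kappa^F(G)\ba \mathsf{G}$ by feeding hypothesis (1) through Lemma~\ref{lemm:openU} and Lemma~\ref{lemm:tran} (a $G$-map $\mathsf{G}\to V\to \mathsf{G}$ from the $\kappa^F$-action to the $\lambda$-action, whose graph supplies a section of another principal fibration). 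You instead stabilize by the representation $E=V\oplus W$ on which the map $\beta$ of hypothesis (1) is defined, and untwist in one stroke with the explicit birational intertwiner $\psi(x,g)=(x,\,g\cdot F(\beta(x)))$; the cancellation $F(\gamma^{-1})F(\gamma)=e$ that makes $\psi$ equivariant is exactly the mechanism hidden in the paper's $\Delta^F$. Two applications of the no-name lemma then strip off $E$ from both sides. Your version buys economy and transparency: one explicit map replaces the intermediate principal-bundle-with-section arguments, and the precise role of hypothesis (1) --- producing a rational map into $\mathsf{G}$ with the twisted equivariance $t(\gamma x)=F(\gamma)t(x)$ --- is laid bare. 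Nothing of substance is lost.

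One caution. You invoke hypothesis (2) as saying that the $\kappa^F$-action is almost free, which is what your second no-name step (and indeed the well-posedness of $\kappa^F(G)\ba \mathsf{G}$ as a birational class) genuinely requires. As literally stated, (2) asserts that $\rho^F$ is almost free, which is automatic because $F$ is bijective on $k$-points; the condition with actual content is that no nontrivial $\gamma\in G$ is simultaneously central in $\mathsf{G}$ and fixed by $F$, which is equivalent to $\kappa^F$ being almost free. The paper's own proof needs the same thing (the relation $\mathsf{G}\stackrel{G}{\leadsto}\mathsf{G}$ is only defined for almost free actions), so you have read the hypothesis the way it must be intended; just be aware that you are using the corrected form rather than the literal one, and say so explicitly.
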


\begin{proof}
Consider the diagonal action of $G$ on $\mathsf{G}\times \mathsf{G}$:
$$
\begin{array}{ccc}
G\times \mathsf{G} \times \mathsf{G}& \stackrel{(\lambda, \rho^F)}{\longrightarrow} &  
\mathsf{G}\times \mathsf{G} \\
(\gamma,g,g')                       & \mapsto &  (\gamma \cdot g, g'\cdot F(\gamma^{-1})). 
\end{array} 
$$
Let $\Delta^F:=\{(g,F^{-1}(g))\} \subset \mathsf{G}\times \mathsf{G}$ 
be the $F$-twisted (anti)diagonal. Then $\Delta^F$ is preserved under 
the $(\lambda,\rho^F)$-action of $G$ and descends to a section of 
the principal (right) $\mathsf{G}$-fibration 
$$
(\lambda, \rho^F)(G)\ba (\mathsf{G}\times \mathsf{G}) \ra  \rho^F(G)\ba \mathsf{G},
$$
projection to the second factor.
It follows that 
$$
\rho^F(G)\ba \mathsf{G} \sim (\lambda, \rho^F)(G)\ba (\mathsf{G}\times \mathsf{G}).
$$
Observe that 
$$
\rho^F(G)\ba \mathsf{G} \sim \lambda(G)\ba \mathsf{G}.
$$

Now we show that
\begin{equation}
\label{eqn:kappa}
\kappa^F(G)\ba \mathsf{G} \sim (\lambda, \rho^F)(G)\ba (\mathsf{G}\times \mathsf{G}).
\end{equation}
Let $V$ be a faithful representation of $G$ as in (1) and $V^{\circ}\subset V$
a $G$-stable Zariski subset admitting a $G$-map into $\mathsf{G}$, considered
with the $\lambda$-action of $G$.  
We know that there exists a $G$-morphism $\xi\,:\, \mathsf{G}\ra V$, 
where $\mathsf{G}$ is considered with the $\kappa^F$-action of $G$, 
such that $\xi(\mathsf{G})\cap V^{\circ}\neq \emptyset$ (see Lemma~\ref{lemm:openU}). 
It follows that $\mathsf{G}\stackrel{G}{\leadsto} \mathsf{G}$, 
where the source carries the $\kappa^F$-action of $G$ and the image the 
$\lambda$-action of $G$. 
Equation~\eqref{eqn:kappa} now follows from Lemma~\ref{lemm:tran}.
 \end{proof}

\begin{coro}
\label{coro:twist}
Let $G\subset \mathsf{G}(k)$  by a finite subgroup
satisfying Assumption (1) of Lemma~\ref{lemm:twist}.
Let $V$ be a faithful representation of $G$ over $k$.
Then 
$$
G\ba \mathsf{G} \sim  G\ba V.
$$
More generally, for any $X$ with an almost free action of $G$
we have
$$
G\ba (X\times V)\sim G\ba X.
$$ 
\end{coro}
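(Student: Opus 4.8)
The plan is to derive both assertions from the general (\emph{no-name}) statement together with Lemma~\ref{lemm:lie}. I would prove the more general statement first, because the displayed equivalence $G\ba\mathsf{G}\sim G\ba V$ then follows from it by a short combination.

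For the general statement, I would fix $X$ with an almost free $G$-action and choose a $G$-invariant open $X^{\circ}\subset X$ on which $G$ acts freely, so that $X^{\circ}\to G\ba X^{\circ}$ is a $G$-torsor with separable quotient map (shrinking $X^{\circ}$ so that the geometric quotient exists). The projection $X^{\circ}\times V\to X^{\circ}$, with $G$ acting diagonally through its linear action on $V$, is a $G$-equivariant trivial vector bundle. By equivariant descent along this torsor --- the same Hilbert~90 mechanism invoked for Lemma~\ref{lemm:g-affine} and the remark preceding it --- the induced map $G\ba(X^{\circ}\times V)\to G\ba X^{\circ}$ is a vector bundle of rank $\dim V$. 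Since any vector bundle is Zariski-locally trivial, it is birational to $(G\ba X)\times\A^{\dim V}$, whence $G\ba(X\times V)\sim G\ba X$. This is the second assertion.

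For the first assertion I would apply the general statement to $X=\mathsf{G}$ carrying the left action $\lambda$, which is free and hence almost free, to obtain $G\ba(\mathsf{G}\times V)\sim G\ba\mathsf{G}$. On the other hand, Assumption~(1) of Lemma~\ref{lemm:twist} supplies $V\stackrel{G}{\leadsto}\mathsf{G}$, and the faithful representation $V$ carries an almost free $G$-action, so Lemma~\ref{lemm:lie} with $X=V$ gives $G\ba(V\times\mathsf{G})\sim G\ba V$. The swap of the two factors is a $G$-isomorphism $V\times\mathsf{G}\cong\mathsf{G}\times V$, so chaining these two equivalences yields $G\ba\mathsf{G}\sim G\ba V$.

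The main obstacle is the general statement, specifically checking that the diagonal-action equivariant bundle descends to an honest vector bundle downstairs: this needs the action on $X^{\circ}$ to be genuinely free (not merely almost free) and the quotient map separable, so that $X^{\circ}\to G\ba X^{\circ}$ is a torsor to which faithfully flat descent applies. Once that is in hand, the passage to stable birational equivalence uses only the generic triviality of vector bundles, and the remainder is formal bookkeeping with the relation $\stackrel{G}{\leadsto}$ and the factor swap.
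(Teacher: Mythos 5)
Your proof is correct and follows essentially the same route as the paper: the paper also obtains $G\ba \mathsf{G}\sim G\ba V$ by viewing $G\ba(\mathsf{G}\times V)$ both as a vector bundle over $G\ba\mathsf{G}$ (the no-name descent argument you use for the general statement) and as a principal right $\mathsf{G}$-fibration over $G\ba V$ trivialized by the section coming from the $G$-equivariant map $V\to\mathsf{G}$ of Assumption (1) --- which is exactly the content of Lemma~\ref{lemm:lie} that you invoke. The ``more generally'' part is likewise handled in the paper by the same vector-bundle/descent mechanism you describe.
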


\begin{proof}
Note that $G\ba (\mathsf{G}\times V)$ is
a vector bundle over $G\ba \mathsf{G}$, and
hence stably birational to it. 
On the other hand, it is
a right  $\mathsf{G}$-fibration over $G\ba V$ with section defined
by the $G$-equivariant map $V\to \mathsf{G}$.

To prove the statement for $X$ it suffices to notice that a dense
Zariski open $G$-stable subset $X^{\circ}\subset X$ admits a nontrivial 
$G$-morphism to $V$. 
 \end{proof}

\section{Equivariant birational geometry of classical groups}
\label{sect:class}

In this section, $k$ is an algebraically closed field, of any characteristic.

\begin{lemm}
\label{lemm:sl2-stable}
The conjugation action $\kappa\,:\, \mathsf{SL}_2\ra \mathsf{SL}_2$
is equivalent to a linear action. 
\end{lemm}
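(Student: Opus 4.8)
The plan is to realize $(\mathsf{SL}_2,\kappa)$ as a conjugation-invariant hypersurface inside an honest linear representation and then to linearize it by a single equivariant projection. First I would note that conjugation $A\mapsto hAh^{-1}$ on $\mathfrak{gl}_2=M_2(k)$ is a linear action, and that the tautological inclusion $\mathsf{SL}_2\hookrightarrow\mathfrak{gl}_2$ is $\mathsf{SL}_2$-equivariant; it identifies $(\mathsf{SL}_2,\kappa)$ with the $\kappa$-invariant affine quadric $Q=\{\det=1\}$. The quadric $Q$ is smooth, since the differential of $\det$ at $g$ is $X\mapsto\operatorname{tr}(\operatorname{adj}(g)X)$ and vanishes only at $g=0\notin Q$, and it contains the identity matrix $I$, which is a fixed point of $\kappa$.

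Next I would linearize $Q$ by stereographic projection from a fixed point. The trace is an invariant linear form, so for any $c\neq\operatorname{tr}(\pm I)$ the affine hyperplane $H=\{A:\operatorname{tr}(A)=c\}$ is $\kappa$-invariant and avoids $\pm I$. Projecting $g\in Q$ to the unique point of $H$ on the line $\{I+t(g-I):t\in\A^1\}$ gives a map that is $\mathsf{SL}_2$-equivariant (because the center $I$ is fixed and $H$ is invariant) and birational onto $H$ (stereographic projection of a smooth quadric from a point of $Q$). In characteristic $\neq 2$ one may take $c=0$, so that $H=\mathfrak{sl}_2$ is an invariant \emph{linear} subspace carrying the adjoint action; projecting instead from the fixed point $-I$ (which has $\operatorname{tr}(-I)=-2\neq 0$) one recovers the Cayley transform $g\mapsto (I-g)(I+g)^{-1}$, a $\kappa$-equivariant birational involution $\mathsf{SL}_2\dashrightarrow\mathfrak{sl}_2$ (that its image is traceless reflects $\mathsf{SL}_2=\mathsf{Sp}_2$, so that the Cayley transform carries the group into its Lie algebra). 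Either description exhibits $\kappa$ as equivariantly birational to the linear adjoint action on $\mathfrak{sl}_2$, which proves the lemma in characteristic $\neq 2$; since an equivariant birational isomorphism is in particular an equivalence in the sense of Section~\ref{sect:gen}, the stated conclusion follows.

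The main obstacle is characteristic $2$, where $\operatorname{tr}(\pm I)=0$, so $\pm I$ lies on the invariant linear hyperplane $\{\operatorname{tr}=0\}$ and cannot be used as a center for projection onto it; equivalently, the Cayley transform degenerates. There I would still project $Q$ from $I$ onto the invariant affine hyperplane $H=\{\operatorname{tr}=1\}$, obtaining an equivariant birational map onto $H$, but now $H$ carries only an affine action with no fixed point (a fixed point would be a scalar matrix of trace $1$, impossible in characteristic $2$), so $H$ is not literally a linear representation. To conclude I would appeal to the affine linearization already set up in Section~\ref{sect:gen}: $H$ is an affine $\mathsf{SL}_2$-bundle over a point, hence $\mathsf{SL}_2$-birational to a finite-dimensional representation by Hilbert 90 (Lemma~\ref{lemm:g-affine} and the surrounding discussion). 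Composing, $\kappa$ is again equivariantly birational to a linear action. I expect this characteristic-$2$ bookkeeping, rather than the projection geometry itself, to be the only delicate point.
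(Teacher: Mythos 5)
Your proof is correct and follows essentially the same route as the paper's: realize $\mathsf{SL}_2$ as the smooth quadric $\{\det=1\}$ inside the linear conjugation representation $\mathsf{M}_2$, and stereographically project from the fixed point $I$ onto a conjugation-invariant hyperplane, obtaining an equivariant birational (degree~$1$) map to a linear action on trace-zero matrices. The one place you go beyond the paper is characteristic $2$: the paper simply projects from $I$ to $\{\operatorname{tr}=0\}$, which degenerates there since $\operatorname{tr}(I)=0$ puts the center of projection on the target hyperplane, whereas you project onto the invariant affine hyperplane $\{\operatorname{tr}=1\}$ and then linearize the resulting affine action via the Hilbert~90 / affine-bundle mechanism of Section~\ref{sect:gen}. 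Since that section explicitly allows $k$ of any characteristic, your extra step is a genuine (and correct) repair of a case the paper's projection does not literally cover, at the modest cost of invoking Lemma~\ref{lemm:g-affine} rather than exhibiting a direct birational map to a representation.
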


\begin{proof}
Realize $\mathsf{SL}_2$ as a nonsingular quadric in $\A^4 = \mathsf{M}_2$.
The conjugation action is linear on $\mathsf{M}_2$ and has
a fixed point corresponding to the identity.
The projection of $\mathsf{SL}_2$ from the identity to the locus 
of trace zero matrices is equivariant
and has degree 1. Hence the conjugation action is rationally equivalent
to the action on trace zero matrices. 
 \end{proof}

\begin{lemm}
\label{lemm:z2-rat}
Let $G=(\Z/2)^n$ and $X$ be a $G$-affine variety over $k$.
Then $G\ba X$ is rational.
\end{lemm}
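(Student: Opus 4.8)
The plan is to reduce at once to a faithful linear representation and then to trivialize the invariant field in stages. Since $X$ is $G$-affine, there is a $G$-equivariant birational isomorphism between $X$ and a faithful representation $V$ of $G$ over $k$; hence $G\ba X$ is birational to $G\ba V$, and since $k(G\ba X)=k(V)^G$ it suffices to show that $k(V)^G$ is purely transcendental over $k$.

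The key structural input is that $G=(\Z/2)^n$ is abelian, so the commuting linear operators giving its action on $V^*$ can be simultaneously triangularized over the algebraically closed field $k$ (a common eigenvector exists, and one induces); this holds in every characteristic. Concretely, I would fix a complete $G$-stable flag $0\subset W_1\subset\cdots\subset W_m=V^*$ together with a basis $x_1,\dots,x_m$ so that $W_j=\langle x_1,\dots,x_j\rangle$. Then each subfield $K_j:=k(x_1,\dots,x_j)$ is $G$-stable, and for every $g\in G$ one has $g\cdot x_{j+1}=a_g\,x_{j+1}+b_g$ with $a_g\in k^\times$ (the scalar by which $g$ acts on the line $W_{j+1}/W_j$) and $b_g\in W_j\subset K_j$. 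Thus at each level $G$ acts on $K_{j+1}=K_j(x_{j+1})$ by affine-linear transformations of the single variable $x_{j+1}$ over the $G$-stable base $K_j$.

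I would then build the invariant tower $L_j:=K_j^G$ by induction, the building block being the one-variable affine-linear case: if $G$ fixes $K_j$ setwise and acts affinely on $x_{j+1}$, then $K_{j+1}^G$ is rational of transcendence degree one over $K_j^G$, so $L_{j+1}=L_j(t_{j+1})$ for a single transcendental $t_{j+1}$. This is a lemma of Miyata. When $\char k\ne 2$ all $a_g$ are roots of unity with $b_g=0$, and $t_{j+1}$ is a suitable invariant monomial (multiplicative Hilbert 90); when $\char k=2$ the action is unipotent, $a_g=1$, and $t_{j+1}$ is an Artin--Schreier type invariant, e.g. $x^2+bx$ when $g\cdot x=x+b$ (additive Hilbert 90). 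Starting from $L_0=k$ and iterating up the flag yields $k(V)^G=L_m=k(t_1,\dots,t_m)$, which proves rationality.

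The main obstacle is the building-block step in the modular case $\char k=2$: the $G$-action on an intermediate base $K_j$ need not be faithful, so the translation cocycle $g\mapsto b_g$ takes values in an additive module to which the naive normal-basis form of Hilbert 90 does not apply directly. This is precisely what Miyata's lemma is designed to overcome, and the crux is to verify its hypotheses at every level of the flag and to check that the transcendence degree genuinely increases by one, so that each $L_{j+1}/L_j$ is purely transcendental rather than algebraic.
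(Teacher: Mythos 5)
Your proof is correct. Note, however, that the paper states Lemma~\ref{lemm:z2-rat} with no proof at all, so there is no argument of the authors' to compare against; what you have supplied is the standard justification. Your route --- reduce to a faithful representation $V$ via the definition of $G$-affine, simultaneously triangularize the commuting involutions over the algebraically closed field $k$, and climb the resulting $G$-stable flag applying Miyata's lemma at each level --- is exactly Fischer's theorem in characteristic $\neq 2$ (where one may even diagonalize and argue directly with the lattice of invariant monomials) combined with Miyata's unipotent--triangular argument in characteristic $2$. The two points you flag as delicate at the end are in fact immediate: Miyata's lemma requires only that $K_j$ and $K_j[x_{j+1}]$ be $G$-stable and that $g\cdot x_{j+1}=a_g x_{j+1}+b_g$ with $a_g\in k^{\times}$ and $b_g\in W_j\subset K_j$, which your flag provides by construction; and since $G$ is finite, $K_{j+1}/K_{j+1}^G$ is algebraic, so $\operatorname{trdeg}_k K_{j+1}^G=j+1>j\geq \operatorname{trdeg}_k L_j$, whence $K_{j+1}^G\neq L_j$ and Miyata's dichotomy forces $K_{j+1}^G=L_j(t_{j+1})$ with $t_{j+1}$ transcendental over $L_j$. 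Your argument actually proves the stronger statement that $G\ba X$ is rational for every finite \emph{abelian} $G$ and every $G$-affine $X$ over an algebraically closed field; the restriction to $(\Z/2)^n$ plays no role beyond making the eigenvalues $\pm 1$.
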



\begin{lemm}
\label{lemm:pgl2}
Let $G\subset \mathsf{PGL}_2(k)\simeq 
\mathsf{SO}_3(k)\subset \mathsf{M}_{2}(k)$ be a finite subgroup. 
Then the action of $G$ on $\mathsf{PGL}_2$ by conjugation is equivalent to the 
left (linear) action of $G$ on trace zero matrices 
$\mathsf{M}_2^0\subset \mathsf{M}_2$. In particular, 
$G\ba \mathsf{PGL}_2$ is stably rational.  
\end{lemm}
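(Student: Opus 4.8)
The plan is to imitate the proof of Lemma~\ref{lemm:sl2-stable}, replacing the stereographic projection of the affine quadric $\mathsf{SL}_2$ by an explicit $G$-equivariant birational map out of $\mathsf{PGL}_2$. Assume first $\char k\neq 2$, so that $\mathsf{M}_2=\langle I\rangle\oplus\mathsf{M}_2^0$ is a direct sum of the scalars and the trace-zero matrices, both stable under conjugation; the conjugation action of $G$ on $\mathsf{M}_2^0$ is exactly the linear $\mathsf{SO}_3$-action in the statement. I would define
\[
\phi\,:\,\mathsf{PGL}_2\dashrightarrow \mathsf{M}_2^0,\qquad [M]\mapsto \frac{M}{\Tr(M)}-\tfrac12 I,
\]
which is well defined on the open locus $\{\Tr(M)\neq 0\}$: it is insensitive to rescaling $M$, and its value is traceless.

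Next I would check that $\phi$ realizes the asserted equivalence. Equivariance is immediate, since conjugation fixes $I$ and preserves the trace:
\[
\phi([hMh^{-1}])=\frac{hMh^{-1}}{\Tr(M)}-\tfrac12 I=h\Big(\frac{M}{\Tr(M)}-\tfrac12 I\Big)h^{-1},
\]
so $\phi$ intertwines the conjugation action on the source with the linear action on $\mathsf{M}_2^0$. Moreover $\phi$ is birational, with inverse $N\mapsto[\,N+\tfrac12 I\,]$: for traceless $N$ one has $\Tr(N+\tfrac12 I)=1$, whence $\phi([N+\tfrac12 I])=N$, and $N+\tfrac12 I$ is invertible for generic $N$ since $\det(N+\tfrac12 I)=\det N+\tfrac14$. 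A $G$-equivariant birational isomorphism is in particular a mutual $\stackrel{G}{\leadsto}$, so the conjugation action on $\mathsf{PGL}_2$ is equivalent to the linear action on $\mathsf{M}_2^0$, i.e. $\mathsf{PGL}_2\stackrel{G}{\leftrightsquigarrow}\mathsf{M}_2^0$; in particular $G\ba\mathsf{PGL}_2\sim G\ba\mathsf{M}_2^0$.

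It then remains to prove that $G\ba\mathsf{M}_2^0$ is stably rational. Here $\mathsf{M}_2^0=\A^3$ is a faithful linear representation of $G\subset\mathsf{SO}_3$, so I would pass to projective space: the $\mathbb{G}_m$-torsor $\A^3\setminus 0\to\P^2$ is $G$-equivariant, and by Hilbert~90 (as used throughout Section~\ref{sect:gen}) it yields $G\ba\A^3\sim G\ba\P^2$. The quotient $G\ba\P^2$ is a unirational surface, hence rational, which completes the argument. The points requiring care are precisely this surface rationality and the excluded prime: the decomposition $\mathsf{M}_2=\langle I\rangle\oplus\mathsf{M}_2^0$ and the normalization by $\tfrac12$ both degenerate in characteristic $2$, while Castelnuovo's criterion for $G\ba\P^2$ is transparent only in characteristic $0$. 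In positive characteristic I would instead invoke the classification of finite subgroups of $\mathsf{SO}_3$ (cyclic, dihedral, $A_4$, $S_4$, $A_5$) together with the classical rationality of their three-dimensional linear quotients, and treat the characteristic-$2$ case by a separate argument.
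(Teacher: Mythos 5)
Your argument is correct in characteristic $\neq 2$ and is essentially the intended one: the paper prints no proof of Lemma~\ref{lemm:pgl2} at all, and the evident template is the proof of Lemma~\ref{lemm:sl2-stable} (linearize by an equivariant projection from the identity); your map $[M]\mapsto M/\Tr(M)-\tfrac12 I$ is exactly that projection, packaged as a $G$-equivariant birational isomorphism $\mathsf{PGL}_2\dashrightarrow \mathsf{M}_2^0$, and your verification of equivariance and birationality is right. Two remarks on the tail of the argument. First, the hedge about positive characteristic in the surface step is unnecessary: a finite group acting faithfully on an irreducible variety acts generically freely, so $\P^2\ra G\ba \P^2$ is separable, hence $G\ba\P^2$ is \emph{separably} unirational, and Castelnuovo's criterion (valid in every characteristic) already gives rationality with no case analysis. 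Second, the fallback you propose is false as stated: over $k=\bar{\F}_p$ the finite subgroups of $\mathsf{PGL}_2(k)\simeq\mathsf{SO}_3(k)$ are not only cyclic, dihedral, $A_4$, $S_4$, $A_5$ --- they also include $\mathsf{PSL}_2(\F_{p^n})$ and $p$-subgroups of the Borel --- so do not lean on that list. Finally, deferring characteristic $2$ is harmless rather than a gap: the statement of the lemma itself presupposes $\mathsf{PGL}_2\simeq\mathsf{SO}_3$ acting on the three-dimensional space of trace-zero matrices, which degenerates when $\char(k)=2$ (there $I\in\mathsf{M}_2^0$), so, as with Lemma~\ref{lemm:s03}, the lemma is implicitly a $\char(k)\neq 2$ statement.
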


\begin{lemm}
\label{lemm:s03}
The left action of $(\Z/2)^2\subset \mathsf{SO}_3\hookrightarrow \mathsf{SO}_4$ is
linear.
\end{lemm}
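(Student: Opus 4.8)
The plan is to reduce the statement to the two rank-one lemmas already available by exploiting the exceptional isogeny $\mathsf{Spin}_4\cong\mathsf{SL}_2\times\mathsf{SL}_2$. Realize $\mathsf{SO}_4$ on $\mathsf{M}_2$ (with quadratic form $\det$) via $(A,B)\cdot X=AXB^{-1}$, so that $\mathsf{SO}_4=(\mathsf{SL}_2\times\mathsf{SL}_2)/\mu_2^{\mathrm{diag}}$ and the embedded $\mathsf{SO}_3$ is the image of the diagonal $\mathsf{SL}_2$; then $G=(\Z/2)^2\subset\mathsf{SO}_3$ lifts to the quaternion group $Q_8\subset\mathsf{SL}_2$ acting by $g\cdot(A,B)=(gA,gB)$. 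The birational map $(A,B)\mapsto(AB^{-1},[B])$ identifies $\mathsf{SO}_4$ $G$-equivariantly with $\mathsf{SL}_2\times\mathsf{PGL}_2$, where $G$ acts on the first factor by \emph{conjugation} and on the second by \emph{left translation}. Since a $G$-equivariant birational isomorphism of each factor with a representation produces one on the product (the target being the direct sum), it suffices to linearize the two factors separately. The conjugation factor is the easy half: the conjugation action of $G\subset\mathsf{PGL}_2$ on $\mathsf{SL}_2$ is equivalent to the linear action on trace-zero matrices exactly as in Lemma~\ref{lemm:sl2-stable}, by projecting from the $G$-fixed point $I$. This is the only place where the ``project from a fixed point'' mechanism of Lemmas~\ref{lemm:sl2-stable} and~\ref{lemm:pgl2} is available.

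The essential point is the left-translation factor: I must show that left translation of $(\Z/2)^2$ on $\mathsf{PGL}_2$ is linear. Here I would lift once more to $\mathsf{SL}_2$ with the left $Q_8$-action and use that $\mathsf{M}_2=W\oplus W$, with $W$ the faithful $2$-dimensional $Q_8$-representation given by the two columns. Sending a matrix to its first column gives a $Q_8$-equivariant morphism $\mathsf{SL}_2\to W$ whose fibres are the solutions of the single linear equation $\det=1$ in the second column, i.e. an affine $\A^1$-bundle over the representation $W$. By Hilbert 90 (Lemma~\ref{lemm:g-affine}) the space $\mathsf{SL}_2$ is then $Q_8$-affine, $Q_8$-equivariantly birational to $W\oplus k_\chi$ for some character $\chi$. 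Descending through the central $\mu_2$ presents $\mathsf{PGL}_2$, with its left $(\Z/2)^2$-action, as $G$-equivariantly birational to $(W/\mu_2)\times k_{\bar\chi}$, where $W/\mu_2$ is the quadric cone $\A^2/\{\pm 1\}$ carrying the honest residual $(\Z/2)^2$-action induced from $Q_8$.

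The hard part is precisely this last step, and I expect it to be the main obstacle. Left translation is fixed-point-free, so the projection-from-a-fixed-point trick is unavailable, and the passage to $\mathsf{PGL}_2$ converts the linear $Q_8$-model into the \emph{non-linear} cone $W/\mu_2$; linearizing the induced $(\Z/2)^2$-action on this rational surface is exactly what distinguishes the left-translation case from the conjugation Lemmas~\ref{lemm:sl2-stable}--\ref{lemm:pgl2}. I would resolve it by an explicit equivariant birational computation: in suitable coordinates the two residual generators act on $W/\mu_2$ by honest commuting involutions, and after diagonalizing one recognizes $W/\mu_2$ as a quadric cone inside the sign representation of $(\Z/2)^2$; the task is then to exhibit two rational semi-invariants transforming by a pair of independent nontrivial characters and to verify that they generate the function field, which yields the desired $G$-equivariant birational map to a faithful $2$-dimensional representation. (Naive first guesses for the semi-invariants fail to generate, so this verification is where the real work lies and where the exceptional structure of $(\Z/2)^2$ and $Q_8$ is used.) Absorbing the remaining character factor $k_{\bar\chi}$ and recombining with the conjugation factor by the product principle then shows that the left $(\Z/2)^2$-action on $\mathsf{SO}_4$ is linear.

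Two safeguards are worth keeping in mind. First, where only \emph{stable} linearity is needed, one may bypass the explicit surface computation by feeding the affine-bundle structure into Corollary~\ref{coro:twist} (verifying Assumption~(1), that a faithful representation satisfies $V\stackrel{G}{\leadsto}\mathsf{SO}_4$) together with the rationality output of Lemma~\ref{lemm:z2-rat} for $(\Z/2)^2$-quotients. Second, the $\mathsf{Spin}_4$ model and the $\det=1$ completion are valid in all characteristics, but the diagonalization in the final step must be phrased so as not to divide by $2$; alternatively one works throughout on the variety of orthonormal frames $\mathsf{SO}_4\subset\mathsf{M}_4$, on which left translation by $G$ is literally the restriction of a linear action.
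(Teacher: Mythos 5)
Your setup---realizing $\mathsf{SO}_4$ as $(\mathsf{SL}_2\times\mathsf{SL}_2)/\mu_2$, identifying $\mathsf{SO}_3$ with the diagonal, lifting $(\Z/2)^2$ to $\mathsf Q_8$, and splitting the left action into a conjugation factor and a translation factor---is the same decomposition the paper uses, and the conjugation half via Lemma~\ref{lemm:sl2-stable} is fine. The gap is exactly where you say ``the real work lies'': the left-translation factor. You propose to linearize the residual $(\Z/2)^2$-action on the cone $W/\mu_2$ by finding two semi-invariants generating its function field, i.e.\ an honest $(\Z/2)^2$-equivariant birational map onto a faithful two-dimensional representation. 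That step is not carried out, and it cannot be: on $\{uv=w^2\}$ the three involutions act by $(u,v,w)\mapsto(-u,-v,w)$, $(v,u,-w)$, $(-v,-u,-w)$, each fixing only the vertex, and on the minimal resolution and on the conic at infinity the full group acts without fixed points (a common fixed point on $\P^1$ would put $(\Z/2)^2$ inside a Borel of $\mathsf{PGL}_2$, whose prime-to-$p$ finite subgroups are cyclic). A faithful linear $(\Z/2)^2$-action on $\A^2$ always has a fixed point on a smooth compactification, and for finite abelian groups (in characteristic $0$, say) the existence of a fixed point on a smooth proper model is an equivariant birational invariant (Reichstein--Youssin, Koll\'ar--Szab\'o). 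The same obstruction applies directly to left translation by $(\Z/2)^2$ on $\mathsf{PGL}_2$: the induced $\mathsf Q_8$-action on $\P(W\oplus W)=\P^3$ has no fixed point because $W\oplus W$ has no one-dimensional $\mathsf Q_8$-subrepresentation. So no choice of semi-invariants will generate the function field; the failure of your ``naive first guesses'' is structural, not an artifact of the guess.

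What survives is only a weak/stable statement, and that is what the paper actually proves: ``linear'' there must be read in the sense of the equivalence $\stackrel{G}{\leftrightsquigarrow}$, and the paper's proof routes everything through the conjugation action on the two factors (Lemmas~\ref{lemm:sl2-stable} and~\ref{lemm:pgl2}) and the bundle structure over $\mathsf{SL}_2/(\mathsf Q_8)_{\mathrm{conj}}$, never claiming a birational linearization of the translation factor. Your own ``first safeguard''---feeding the affine-bundle structure into Corollary~\ref{coro:twist} and Lemma~\ref{lemm:z2-rat} to obtain stable linearity---is essentially the correct route and should be promoted from an aside to the main argument; the explicit surface computation you defer should be dropped, since its intended conclusion is false as stated.
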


\begin{proof}
Consider the subgroup $(\Z/2)^3 \subset \mathsf{SO}_4$.
It contains a central subgroup $\Z/2$ and a complementary
subgroup $H_2=(\Z/2)^2\subset \mathsf{SO}_3 = \mathsf{PGL}_2$. 
By Lemma~\ref{lemm:sl2-stable}, the conjugation action 
$H_2\subset \mathsf{PGL}_2$ is linear.

This is a subgroup of the diagonal subgroup
$\mathsf{SO}_3 = \mathsf{PGL}_2\subset \mathsf{SL}_2\times \mathsf{SL}_2/(\Z/2)$.
Note that $\mathsf{SO}_4$ is a product of $\mathsf{SO}_3= (h,h)$ and
$\mathsf{Spin}_3 = (g,1)$, and that conjugation by elements in $(\Z/2)^2$
respects this decomposition. 
Thus the action is a vector bundle over 
$\mathsf{SL}_2/(\mathsf Q_8)_{conj} = \mathsf{SL}_2/(\Z/2)^2$
(where $\mathsf Q_8$ are the quaternions).

Hence the action on $\mathsf{SO}_4$ is linear and the automorphism $F$
is the identity on the diagonal $\mathsf{SO}_3$. The same holds for
the twisted $F$-action.
 \end{proof}

\begin{lemm}
\label{lemm:itself}
Let $\mathsf{G}$ be an algebraic group over $k$. Assume that 
$\mathsf{G}$ admits an affine action on itself, e.g., 
$\mathsf{G}=\mathsf{GL}_n, \mathsf{SL}_n, \mathsf{Sp}_n$. 
Let $G\subset \mathsf{G}(k)$ be a finite subgroup which has trivial
intersection with the center of $\mathsf{G}$. 

Then the conjugation action of $G$ on $\mathsf{G}$
is stably birationally equivalent to a linear action.
\end{lemm}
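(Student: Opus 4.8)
The plan is to apply Lemma~\ref{lemm:twist} in the untwisted case $F=\mathrm{id}$. Then the $F$-twisted conjugation $\kappa^F$ is ordinary conjugation $\kappa$ and the $F$-twisted right action $\rho^F$ is the right-translation action $\rho\,:\,(\gamma,g)\mapsto g\gamma^{-1}$. Once the two hypotheses of Lemma~\ref{lemm:twist} are checked, that lemma gives
$$
\kappa(G)\ba \mathsf{G}\sim \lambda(G)\ba \mathsf{G},
$$
and Corollary~\ref{coro:twist} identifies the right-hand side with $G\ba V$ for a faithful representation $V$ of $G$. Composing these two stable birational equivalences exhibits the conjugation quotient as stably birational to a linear quotient, which is the assertion.

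First I would verify hypothesis (1) of Lemma~\ref{lemm:twist}: the existence of a faithful representation $V$ with $V\stackrel{G}{\leadsto}\mathsf{G}$, where $\mathsf{G}$ carries the left-translation action $\lambda$. This is precisely the content of the assumption that $\mathsf{G}$ admits an affine action on itself. By the definition of $G$-affine there is a $G$-equivariant birational isomorphism between $\mathsf{G}$ (with $\lambda$) and a faithful representation $V$; restricting it to a dense open $G$-stable subset of $V$ yields a $G$-morphism into $\mathsf{G}$, which is exactly a witness for $V\stackrel{G}{\leadsto}\mathsf{G}$. For the listed groups this is concrete: left translation is the restriction of the linear left-translation action on the ambient matrix space $\mathsf{M}_n$, with $\mathsf{GL}_n$ open in $\mathsf{M}_n$ and $\mathsf{SL}_n,\mathsf{Sp}_n$ covered by the same hypothesis.

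Next I would check hypothesis (2), that $\rho^{\mathrm{id}}=\rho$ is almost free. This is immediate: $g\gamma^{-1}=g$ forces $\gamma=e$, so right translation by the subgroup $G$ is free on all of $\mathsf{G}$.

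The role of the hypothesis $G\cap Z(\mathsf{G})=\{e\}$ is to make the conjugation action $\kappa$ almost free, which is needed both to read the conclusion as an equivalence of almost free actions and, more importantly, to place $(\mathsf{G},\kappa)$ inside the framework of the relation $\stackrel{G}{\leadsto}$ on which Lemma~\ref{lemm:twist} rests. Indeed, the $\kappa$-fixed locus of a non-identity $\gamma\in G$ is its centralizer $Z_{\mathsf{G}}(\gamma)$, a proper closed subvariety exactly when $\gamma\notin Z(\mathsf{G})$; the center condition makes every non-identity element of $G$ non-central, so on the complement of the finite union $\bigcup_{\gamma\neq e}Z_{\mathsf{G}}(\gamma)$ the group $G$ acts freely by conjugation. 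The step I expect to require the most care is the precise verification of hypothesis (1) --- interpreting ``affine action on itself'' as supplying a $G$-birational identification of $(\mathsf{G},\lambda)$ with a representation --- since for $\mathsf{SL}_n$ and $\mathsf{Sp}_n$ the group is not open in its matrix space and one must lean on the assumed affineness rather than a naive inclusion. Everything else is the formal assembly of Lemma~\ref{lemm:twist} and Corollary~\ref{coro:twist}.
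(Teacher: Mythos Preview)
Your proof is correct and is essentially the paper's own argument: the paper's proof of this lemma is nothing but the $F=\mathrm{id}$ case of Lemma~\ref{lemm:twist}, phrased via Corollary~\ref{coro:twist} and the observation that the diagonal left-translation action on $\mathsf{G}\times\mathsf{G}$ becomes, after the coordinate change $(g,g')\mapsto (gg'^{-1},g')$, a principal $\mathsf{G}$-bundle over $\mathsf{G}$ with the conjugation action on the base. Your verification of hypotheses (1) and (2) and your explanation of the role of the center condition (making $\kappa$ almost free, as needed in the appeal to Lemma~\ref{lemm:openU} inside the proof of Lemma~\ref{lemm:twist}) are accurate.
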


\begin{proof}
By Corollary~\ref{coro:twist}, 
the diagonal left translation action of 
$G$ on $\mathsf{G}\times \mathsf{G}$ is equivalent
to the action on a principal
$\mathsf{G}$-bundle over $\mathsf{G}$, with 
$G$ acting on the base by conjugation. This proves the equivalence.
 \end{proof}

\begin{coro}
\label{coro:itself}
Let $G\subset \mathsf{G}(k)$ 
be a finite subgroup
as in Lemma~\ref{lemm:itself}. 
Assume that the action of $G$ on $\mathsf{G}$ by left translations
is affine. Then both the left and the conjugation action of $G$ 
on $\mathsf{G}$ are stably birational to a linear action.
\end{coro}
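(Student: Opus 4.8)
The plan is to treat the two actions separately and then observe that both quotients are stably birational to the quotient $G\ba V$ of one fixed faithful representation $V$ of $G$.

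First I would dispose of the left-translation action. By hypothesis this action is affine, which is exactly Assumption (1) of Lemma~\ref{lemm:twist}: the $G$-equivariant birational isomorphism from $\mathsf{G}$ (carrying the $\lambda$-action) to a faithful representation $V$ supplies in particular a $G$-morphism realizing $V\stackrel{G}{\leadsto}\mathsf{G}$. Hence Corollary~\ref{coro:twist} applies and gives $G\ba\mathsf{G}\sim G\ba V$ for the left action, which is precisely the assertion that the left action is stably birational to a linear one. (Equivalently, this is immediate from the definition of an affine action, since $\mathsf{G}$ with the $\lambda$-action is then $G$-birational to $V$.) Next I would handle the conjugation action. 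Since $G\subset\mathsf{G}(k)$ meets the center of $\mathsf{G}$ trivially and $\mathsf{G}$ admits an affine action on itself, the hypotheses of Lemma~\ref{lemm:itself} are met, and that lemma already yields that the conjugation action of $G$ on $\mathsf{G}$ is stably birationally equivalent to a linear action, i.e. $G\ba\mathsf{G}\sim G\ba V$ for the conjugation action and the same $V$.

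Finally I would combine the two statements. Both $G\ba\mathsf{G}$ for the left action and $G\ba\mathsf{G}$ for the conjugation action are stably birational to $G\ba V$, hence to a common linear quotient, which completes the proof. To make the phrase \emph{a linear action} unambiguous I would record that any two faithful representations $V,V'$ of $G$ have stably birational quotients: applying the general clause of Corollary~\ref{coro:twist} with $X=V'$ gives $G\ba(V'\times V)\sim G\ba V'$, and symmetrically $G\ba(V'\times V)\sim G\ba V$, so $G\ba V\sim G\ba V'$. The only point that requires care is verifying that the affineness hypothesis genuinely produces the input $V\stackrel{G}{\leadsto}\mathsf{G}$ needed to invoke Corollary~\ref{coro:twist}; once this is in place, both halves follow formally from results already established, and I expect no serious obstacle beyond bookkeeping of which $G$-action each copy of $\mathsf{G}$ carries.
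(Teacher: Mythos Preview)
Your proposal is correct and matches the paper's intended argument, which leaves the corollary without proof since it follows immediately from the definition of an affine action (for the left-translation part) together with Lemma~\ref{lemm:itself} (for the conjugation part). The detour through Corollary~\ref{coro:twist} for the left action is unnecessary---as you yourself note parenthetically, affineness already gives a $G$-birational isomorphism $\mathsf{G}\dashrightarrow V$, hence $G\ba\mathsf{G}$ is birational to $G\ba V$ outright---but your verification that Assumption~(1) is met and your remark that any two faithful linear quotients are stably birational are both sound.
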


\begin{prop}
\label{prop:g-afff}
Let $\mathsf G$ be a classical simply-connected Lie group of type $A$ or $C$, i.e., 
$\mathsf G=\mathsf{SL}_n$ or $\mathsf G=\mathsf{Sp}_{2n}$ over $k$. 
Let $G\subset \mathsf G(k)$ be a finite subgroup. 
Then $\mathsf G$ is a $G$-affine variety, for the standard left action of $G$. 
\end{prop}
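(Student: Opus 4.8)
The plan is to realize each of $\mathsf{SL}_n$ and $\mathsf{Sp}_{2n}$, with its left-translation action of $G$, as a $G$-variety that is $G$-ruled over a point, and then to invoke Lemma~\ref{lemm:g-affine}. The hypothesis of that lemma is easy to check: left translation by a finite subgroup is \emph{free} (if $\gamma g=g$ then $\gamma=1$), hence in particular almost free. So the entire content is the construction of a tower of affine $G$-bundles, and the guiding idea in both cases is column-peeling: once the earlier columns of a matrix are fixed and linearly independent, the defining relations become affine-linear in the next column.

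For $\mathsf{G}=\mathsf{SL}_n$ I would use the single map $\pi\colon \mathsf{SL}_n\to V^{\oplus(n-1)}$ sending a matrix to its first $n-1$ columns, where $V=k^n$ is the standard representation and $G$ acts diagonally. This is $G$-equivariant, its image is the $G$-stable open locus $B\subset V^{\oplus(n-1)}$ of linearly independent tuples, and over $B$ the fibre is $\{c_n : \det(c_1,\dots,c_{n-1},c_n)=1\}$. With $c_1,\dots,c_{n-1}$ fixed and independent, this is a single nonzero affine-linear condition on $c_n$, so the fibre is an affine hyperplane $\A^{n-1}$; and since $\det(\gamma c_1,\dots,\gamma c_n)=\det(c_1,\dots,c_n)$ for $\gamma\in G\subset \mathsf{SL}_n(k)$, the affine-bundle structure is $G$-equivariant. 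Thus $\mathsf{SL}_n$ is an affine $G$-bundle over an open subset of the faithful representation $V^{\oplus(n-1)}$, which in turn is an affine $G$-bundle over a point; this is the desired $G$-ruling.

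For $\mathsf{G}=\mathsf{Sp}_{2n}$ acting on $W=k^{2n}$ with symplectic form $\omega$ I would iterate the peeling. Let $Z_k\subset W^{\oplus k}$ denote the image of the projection to the first $k$ columns and $Z_k^{\circ}$ its open locus of linearly independent tuples. The key point is that, with $c_1,\dots,c_{k-1}$ fixed and independent, the relations introduced by the new column $c_k$, namely $\omega(c_i,c_k)=\omega(e_i,e_k)$ for $i<k$, are $k-1$ \emph{independent} affine-linear equations in $c_k$ (independence of the functionals $\omega(c_i,-)$ follows from nondegeneracy of $\omega$ and independence of the $c_i$). Hence $Z_k\to Z_{k-1}^{\circ}$ is an affine $G$-bundle with fibre $\A^{2n-k+1}$, again $G$-equivariant because $G\subset \mathsf{Sp}_{2n}(k)$ preserves $\omega$. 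Stacking these maps for $k=2,\dots,2n$ over the faithful standard representation $W=Z_1$ (with open locus $W\setminus 0$), which is itself an affine $G$-bundle over a point, exhibits $\mathsf{Sp}_{2n}$ as $G$-ruled over a point.

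In both cases Lemma~\ref{lemm:g-affine} then gives that $\mathsf{G}$ is $G$-affine. The step that I expect to require the most care is confirming that the column-peeling maps really are affine $G$-bundles over the stated open loci: concretely, that each image locus $Z_k$ is exactly the set of linearly independent tuples satisfying the lower relations, that the remaining relations become affine-linear and of full rank $k-1$ once the earlier columns are fixed and independent, and that these maps are surjective onto $Z_{k-1}^{\circ}$ with affine-space fibres. The $G$-equivariance of the bundle structure, by contrast, is automatic from the invariance of $\det$, respectively $\omega$, under $G$.
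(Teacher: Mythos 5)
Your proposal is correct and follows essentially the same route as the paper: both exhibit the group as a tower of column-peeling affine $G$-bundles (the $\det=1$ condition being affine-linear in the last column of $\mathsf{SL}_n$, and the symplectic relations $\omega(v_i,v_j)=\delta_{j,n+i}$ being linear in each new column of $\mathsf{Sp}_{2n}$ once the earlier ones are fixed and independent), and then invoke Lemma~\ref{lemm:g-affine}. The only difference is cosmetic — you collapse the $\mathsf{SL}_n$ tower into a single affine bundle over the open locus of independent $(n-1)$-tuples in $V^{\oplus(n-1)}$, whereas the paper keeps the full step-by-step ruling.
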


\begin{proof}
Note first that any finite subgroup $G\subset \mathsf{GL}_n(k)$ induces
a $G$-affine structure on $\mathsf{GL}_n$. Indeed, 
$\mathsf{GL}_n\subset \mathsf{M}_{n\times n}=\oplus_{i=1}^n V^{(i)}$, 
a direct sum of $n$ copies of the standard representation 
$V$ of $\mathsf{GL}_n$. 
Put $B_0=0$, a point, and $X_1:=V^{(1)}=V$. We have a canonical projection $X_1\ra B_0$. 
Define $X_j\subset \oplus_{i=1}^j V^{(i)}$ as the set of those vectors, whose projections to
$\oplus_{i=1}^{j-1}V^{(i)}$ are linearly independent, and $B_j\subset X_j$ as the subset of 
vectors in $X_j$, which are linearly independent in $\oplus_{i=1}^j V^{(i)}$, under the
standards identification $V=V^{(i)}$. For any $G\subset \mathsf{GL}_n$ this defines the structure
of a $G$-ruling on $\mathsf{GL}_n$ over a point. 

For $\mathsf G=\mathsf{SL}_n$, and $G\subset \mathsf G$ we have a similar $G$-ruling: 
For $j=1,\ldots, n-1$ it is the same as above. For $j=n$, put $X_n:=\mathsf{SL}_n$.
The map $X_n\ra B_{n-1}$ is the restriction of the map above. Explicitly, 
it is the projection to the first $(n-1)$-vectors $(v_1,\ldots, v_{n-1})$, 
with fiber an affine subspace $F_{(v_1,\ldots, v_{n-1})}\subset V^{(n)}=V$, 
given by the affine equation in the coordinates of the last vector $\det(v_1,\ldots, v_n)=1$.
Now we can apply Lemma~\ref{lemm:g-affine} to conclude that $\mathsf{SL}_n$ is $G$-affine.   

Note that by Tsen's theorem, the morphism $\mathsf{GL}_n\ra \mathsf{GL}_n/\mathsf{SL}_n$ 
has a section. This gives a $G$-equivariant birational isomorphism 
$\mathsf{SL}_n\times \mathbb G_m\ra \mathsf{GL}_n$.

\

The group $\mathsf{G}=\mathsf{Sp}_{2n}$ has a canonical embedding into $\mathsf{M}_{2n\times 2n}$, 
defined by the equations
\begin{equation}
\label{eqn:system}
\omega(v_i,v_{i'})=\delta_{i',n+i}, \,\, \text{ for }\,\, i<i',
\end{equation}
($\omega$ is the standard bilinear form and $\delta$ is the delta function). 
The system of projections is induced from the one above:
$$
X_j=\{(v_1, \ldots, v_j)\} \subset \oplus_{i=1}^j V^{(i)},
$$ 
satisfying equations \eqref{eqn:system}, 
for indices $1\le i<i'\le j$, and the property that the vectors
$v_1,\ldots, v_{j-1}$ are linearly independent in $V$, under the identifications $V^{(i)}=V$.  
The subvariety $B_j\subset X_j$ is given as the locus where $v_1,\ldots, v_j$ are
linearly independent. Each map $X_j\ra B_{j-1}$ is an affine $G$-bundle, 
for any finite subgroup $G\subset \mathsf{G}(k)$ - its fibers are given 
by a system of {\em linear} equations on the coordinates of $v_j$. 
 \end{proof}

\begin{prop}
\label{prop:so}
Let $\mathsf G=\mathsf{SO}_n$ and $G\subset \mathsf{G}(k)$ be a finite subgroup.
Then there exist a $G$-ruling $X$, a variety $Y$ with trivial $G$-action and a $G$-equivariant
finite morphism
$$
\pi\,:\, \mathsf G\times Y\ra X.
$$
Moreover, $\deg(\pi) \mid 2^{n-1}$. 
\end{prop}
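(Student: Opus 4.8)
The plan is to mimic the ruling construction of Proposition~\ref{prop:g-afff}, with one essential modification: the columns of an orthonormal matrix form an orthonormal frame, and the \emph{normalization} $b(v_i,v_i)=1$ is a quadratic condition, which cannot be the fiber of an affine bundle. The idea is to drop the normalization. Realize $\mathsf{SO}_n\subset\mathsf{M}_{n\times n}=\bigoplus_{i=1}^n V^{(i)}$ as the variety of oriented orthonormal frames $(v_1,\dots,v_n)$, i.e.\ $b(v_i,v_j)=\delta_{ij}$ and $\det(v_1,\dots,v_n)=1$, where $b$ is the polarization of the quadratic form $q$ on $V$ and $G$ acts diagonally through the standard representation (this is the left-translation action). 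Let $X$ be the variety of \emph{anisotropic orthogonal frames}, subject only to $b(v_i,v_j)=0$ for $i\neq j$ and $b(v_i,v_i)\neq 0$, again with the diagonal $G$-action.

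First I would check that $X$ is a $G$-ruling over a point. Following the pattern of Proposition~\ref{prop:g-afff}, set $B_0$ a point, $X_1=V^{(1)}=V$, and inductively let $X_j\subset\bigoplus_{i=1}^j V^{(i)}$ be the locus where $(v_1,\dots,v_{j-1})\in B_{j-1}$ and $v_j\in\langle v_1,\dots,v_{j-1}\rangle^{\perp}$, with $B_j\subset X_j$ the open $G$-stable locus where in addition $b(v_j,v_j)\neq 0$. On $B_{j-1}$ the vectors $v_1,\dots,v_{j-1}$ are mutually orthogonal and anisotropic, hence span a nondegenerate subspace, so the orthogonal complement is a sub-vector bundle of rank $n-j+1$; thus $X_j\to B_{j-1}$ is an affine $G$-bundle, its fibers being cut out by the \emph{linear} equations $b(v_i,v_j)=0$, which are $G$-invariant since $b$ is. Hence $X=X_n$ is a $G$-ruling, and by Lemma~\ref{lemm:g-affine} it is $G$-affine whenever the action is almost free.

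Next I would produce the finite cover. Take $Y=\mathbb{G}_m^n$ with trivial $G$-action and define
$$
\pi\,:\,\mathsf{SO}_n\times Y\ra X,\qquad \pi\big((u_1,\dots,u_n),(\lambda_1,\dots,\lambda_n)\big)=(\lambda_1 u_1,\dots,\lambda_n u_n).
$$
Since $G$ acts by $u_i\mapsto g u_i$ and trivially on the $\lambda_i$, while on $X$ it acts by $v_i\mapsto g v_i$, the map $\pi$ is $G$-equivariant. It is dominant onto the anisotropic locus: given $(v_i)\in X$ one recovers $(u_i,\lambda_i)$ from $v_i=\lambda_i u_i$ with $b(u_i,u_i)=1$, i.e.\ $\lambda_i^2=b(v_i,v_i)$. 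This is exactly where the factors of $2$ enter. Each $\lambda_i$ is determined only up to sign ($2^n$ choices), and flipping the sign of a single $\lambda_i$ reverses $\det(u_1,\dots,u_n)$; since every orthonormal frame has determinant $\pm 1$, exactly half of the $2^n$ choices land in $\mathsf{SO}_n$. Hence the generic fiber of $\pi$ has $2^{n-1}$ points and $\deg(\pi)\mid 2^{n-1}$.

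Finally, the main obstacle is to promote $\pi$ to an honest finite morphism and to control characteristic $2$. On the anisotropic locus $b(v_i,v_i)$ is a nonvanishing regular function, so $\lambda_i=\sqrt{b(v_i,v_i)}$ stays in $\mathbb{G}_m$, bounded away from $0$ and $\infty$, and $u_i=v_i/\lambda_i$ stays bounded; this quasi-finiteness together with the absence of escaping points should yield properness, hence finiteness of $\pi$ onto $X$. The delicate point is characteristic $2$: there the Frobenius makes $\lambda\mapsto\lambda^2$ purely inseparable and blurs the distinction between $\mathsf{SO}_n$ and $\mathsf{O}_n$, so the sign-counting must be replaced by a scheme-theoretic degree computation. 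The divisibility $\deg(\pi)\mid 2^{n-1}$ is stated precisely to absorb this, as well as any coincidences of signs occurring for special subgroups $G$.
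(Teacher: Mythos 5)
Your characteristic $\neq 2$ argument is essentially the paper's own: the paper also drops the normalization, takes for $X_j$ the locus cut out by the orthogonality conditions $b(v_i,v_{i'})=0$ for $i<i'$ (an affine/vector $G$-bundle over $B_{j-1}$ at each stage), introduces the commuting $\mathbb{G}_m^n$-action rescaling the $v_i$, and gets $2^{n-1}$ as the order of the stabilizer of a general point for the transitive $\mathsf{SO}_n\times\mathbb{G}_m^n$-action --- which is exactly your sign count $\lambda_i\mapsto\pm\lambda_i$ subject to $\prod\epsilon_i=1$. Your worry about finiteness is resolved the same way the paper implicitly resolves it: $\pi$ is the orbit map of a transitive action with finite stabilizers, hence finite onto (a dense open subset of) $X$.

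The genuine gap is characteristic $2$, which the statement does not exclude (the section explicitly allows any characteristic). There your construction is not merely ``delicate'' but vacuous: the polarization $b(x,y)=q(x+y)-q(x)-q(y)$ of a quadratic form satisfies $b(v,v)=q(2v)-2q(v)=0$ identically in characteristic $2$, so your anisotropic locus $b(v_i,v_i)\neq 0$ is empty, and no scheme-theoretic reinterpretation of the degree can repair an empty $X$. The paper handles this case by a different route: for odd rank it uses the exceptional isomorphism $\mathsf{SO}_{2n+1}\simeq\mathsf{Sp}_{2n}$ and falls back on Proposition~\ref{prop:g-afff}; for even rank it uses the embedding $\mathsf{SO}_{2n}\subset\mathsf{Sp}_{2n}$, the identity $f(x+y)=f(x)+f(y)+\omega(x,y)$ relating the quadratic form to the symplectic form, and the resulting fibration $\mathsf{Sp}_{2n}\to\mathsf{Sp}_{2n}/\mathsf{SO}_{2n}\sim\mathbb{A}^{2n}$ recording the values $f(v_i)$; a finite cover is then produced from $\mathsf{SO}_{2n}\times\mathsf{B}^n$, where $\mathsf{B}=\mathbb{G}_m\rtimes\mathbb{G}_a\subset\mathsf{SL}_2$, mapping onto the $G$-ruled variety $\mathsf{Sp}_{2n}$ with $2$-power degree. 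You would need to supply an argument of this kind (working with the quadratic form $f$ itself rather than its degenerate polarization) to cover characteristic $2$.
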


\begin{proof}
Keep the notations in the proof of Proposition~\ref{prop:g-afff}:
$\mathsf G\subset \mathsf{M}_{n}$. 

Assume that $\char(k)\neq 2$. Every quadratic form can be diagonalized
over $k$. Let $X_n\subset \mathsf{M}_n$ be the subvariety given by:
\begin{equation}
\label{eqn:ort-eq}
(v_i,v_{i'})=\delta_{i',n+i},\,\, \text{ for }\,\, i<i'.
\end{equation}
The system of projections is the same as above: $X_j\subset \oplus_{i=1}^j V^{(i)}$
is the subset of vectors satisfying equations 
\eqref{eqn:ort-eq}, for indices $1\le i<i'\le j$, 
and the condition that $v_1,\ldots, v_{j-1}$ are linearly independent.
The subvariety $B_j\subset X_j$ corresponds to $j$-tuples  
$(v_1,\ldots, v_j)$  
which are linearly independent (as vectors in $V=V^{(i)}$).   
Each map $X_j\ra B_{j-1}$ is a $G$-equivariant vector bundle, 
for any finite subgroup $G\subset \mathsf{G}(k)$.
Each $X_j$ carries the action of a $j$-dimensional torus $\mathbb G_m^j$, over $k$, 
commuting with the action of $G$. The action of $ \mathsf G\times \mathbb G_m^n$ on 
$X_n$ is transitive, and the stabilizer of a general $k$-point has order $2^{n-1}$. 
The claim follows, for $Y:=\mathbb G_m^n$. 

Assume that $\char(k)=2$. In this case, $\mathsf{SO}_{2n+1}\simeq \mathsf{Sp}_{2n}$ and 
we can apply Proposition~\ref{prop:g-afff}. We also have
$\mathsf{SO}_{2n}\subset \mathsf{Sp}_{2n}$, where 
$\mathsf{Sp}_{2n}(k)$ is the set of elements of $\mathsf{GL}_{2n}(k)$ 
which preserve a symplectic bilinear form $\omega$, and  
$\mathsf{SO}_{2n}(k)$ the set of 
those elements which in addition preserve a quadratic form $f$. 
The forms are related by the condition
$$
f(x+y)=f(x)+f(y)+\omega(x,y).
$$
We may identify a general element 
$\gamma\in\mathsf{Sp}_{2n}(k)$ with
a choice of an orthogonal basis $\{v_1, \ldots, v_{2n}\}$. 
Observe that the map 
$$
\begin{array}{cccc}
\mathsf{Sp}_{2n}       & \ra     &  \mathsf{Sp}_{2n}/\mathsf{SO}_{2n} & \sim \mathbb A^{2n}\\
\{v_1, \ldots, v_{2n}\}& \mapsto & (f(v_1),\ldots, f(v_{2n})).         & 
\end{array}
$$
Indeed, $\gamma\in \mathsf{SO}_{2n}(k)$ if and only if $f(\gamma x)=f(x)$, for all $x\in V$. 
We have
\begin{align*}
f(\sum_{i=1}^{2n} a_iv_i) & =\sum_{i=1}^{2n} a_i^2f(v_i) +\sum_{i\neq j}a_ia_j(v_i,v_j) \\
                          & =\sum_{i=1}^{2n} a_i^2f(\gamma v_i) +
                          \sum_{i\neq j}a_ia_j\omega(\gamma v_i,\gamma v_j)\\
                          & = f(\gamma (\sum_{i=1}^{2n} a_iv_i)),
\end{align*}
since $f(\gamma v_i)=f(v_i)$ and $\gamma$ preserves $\omega$.

We claim that the bundle $\mathsf{Sp}_{2n}\ra \mathsf{Sp}_{2n}/\mathsf{SO}_{2n}\sim \mathbb A^{2n}$ 
admits a multisection of degree $2^{2n}$. 
Explicitly, it can be constructed as follows:
fix an orthogonal basis $\{v_1, \ldots, v_{2n}\}$ 
such that $f(v_i)\neq 0$, for $i=1, \ldots, 2n$. 
We have an action of the affine group $\mathsf{B}=\mathbb G_m\rtimes \mathbb G_a\subset \mathsf{SL}_2$
given by 
$$
(x_i,x_{n+i})\mapsto (\lambda x_i, \mu x_i+\lambda^{-1}x_{n+i}), \,\,\, \text{ for }\,\,\, i=1,\ldots, n.
$$
We claim that this gives a generically surjective map:
$$
\mathsf{SO}_{2n}\times \mathsf{B}^n \ra \mathsf{Sp}_{2n}
$$
of degree $2^{2n}$.  The image of $\mathsf{B}^n \cdot \{f(v_1), \ldots, f(v_{2n})\}$ is dense in 
$\mathbb A^{2n}$. Consider the intersection $\mathsf{B}^n \cap \mathsf{SO}_{2n}$:
$$
f(v_i)=\lambda^2f(v_i), \,\, \text{ and }\,\, 
f(v_{i+n})=\mu^2f(v_i)+\lambda\mu\omega(v_i,v_{i+n}) + \lambda^{-2}f(v_{i+n}).
$$
These equations can be solved in $k$, for each $i=1,\ldots, n$, and 
we have a dominant map 
$\mathsf{SO}_{2n}\times \mathsf{B}\ra \mathbb A^2_i$ of degree 4, for each $i$.    
This concludes the proof.
 \end{proof}

\section{Stable cohomology}
\label{sect:coho}

In this section we collect background material on the stable
cohomology of finite groups, developing the theory over
arbitrary algebraically closed fields $k$. 
We will omit $k$ from the notation when the
field is clear from from the context.

\

For every finite group $G$ and a $G$-module $M$ we have
the notion of {\em group cohomology}, as the derived functor 
$M\mapsto M^G$, the $G$-invariants, or, topologically,  
as the cohomology of the classifying space $BG=X/G$, where $X$ is 
a contractible space with a fixed point free action of $G$.  

\

Passing to algebraic geometry, 
let $X$ be an algebraic variety over $k$, 
with an almost free action of a $G$. Let $X^{\circ}\subset X$ 
be the locus where the action is free. 
Let $M$ be a finite $G$-module. It defines a sheaf on 
$\tilde{X}:=X^{\circ}/G$. 
This gives a homomorphism from group cohomology of $G$ to \'etale 
cohomology of $\tilde{X}$:
$$
H^i(G,M)\ra H^i_{et}(\tilde{X}, M).
$$ 
Composing with restriction to the generic point we get a homomorphism
$$
\sigma_i^*\,:\, H^i(G,M)\ra H^i(\G_K,M),
$$ 
where $\G_K$ is the absolute Galois group of 
the function field $K=k(\tilde{X})$. 
There are canonical isomorphisms
$$
H^i(\G_K,M)= \lim_{\stackrel{\longrightarrow}{\tiny D}}H^i(X\setminus D,M),
$$
where the limit is taken over divisors of $D$. We can interpret elements
in the kernel of $\sigma^*$ as classes vanishing on some 
Zariski open subvariety $\tilde{U}\subset \tilde{X}$.

\begin{rem}
Note that for fixed $G$ and $M$, the groups 
$\sigma_i^*(H^i(G,M))=0$, for all $i>\dim(X)$, while
the usual group cohomology need not vanish. 
\end{rem} 

\begin{prop}
\label{prop:finite}
There exist a finite group $\tilde{G}$ and a 
sequence
$$
\G_K\stackrel{\tilde{\sigma}}{\longrightarrow} \tilde{G} \stackrel{\rho}{\longrightarrow} G
$$
of homomorphisms $\tilde{\sigma}$ and $\rho$ such that for all $0\le i\le \dim(X)$ one has
$$
\Ker(\sigma_i^*)\subset \Ker(\rho_i^*),
$$
where $\rho_i^*\,:\, H^i(\tilde{G},M)\ra H^i(G,M)$ is the induced map on 
group cohomology. 
\end{prop}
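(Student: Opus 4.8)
The plan is to reduce the statement to the standard description of continuous Galois cohomology as a filtered colimit over finite quotients, combined with the finiteness of the groups $H^i(G,M)$. First I would record the easy half. For any factorization $\G_K\xrightarrow{\tilde\sigma}\tilde G\xrightarrow{\rho}G$ of the natural map $\sigma$ through a finite group, functoriality of group cohomology gives $\sigma_i^*=\tilde\sigma_i^*\circ\rho_i^*$, where $\rho_i^*\colon H^i(G,M)\to H^i(\tilde G,M)$ is inflation along $\rho$. Hence $\Ker(\rho_i^*)\subseteq\Ker(\sigma_i^*)$ holds automatically, and the real content of the proposition is the reverse inclusion in the range $0\le i\le \dim(X)$: a single finite quotient $\tilde G$ detects every class that dies over the generic point.

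Next I would invoke the identification $H^i(\G_K,M)=\varinjlim_{U}H^i(\G_K/U,M)$, the colimit taken over open normal subgroups $U\triangleleft\G_K$ with transition maps given by inflation (continuous cohomology of a profinite group with finite coefficients). Writing $N=\Ker(\sigma)$, so that $\G_K/N\cong G$, a class $\alpha\in H^i(G,M)$ lies in $\Ker(\sigma_i^*)$ precisely when its inflation to $H^i(\G_K,M)$ vanishes, which by the colimit description means $\alpha$ already maps to $0$ in $H^i(\G_K/U_\alpha,M)$ for some open normal $U_\alpha\subseteq N$. Geometrically this is exactly the interpretation recorded above that $\alpha$ restricts to zero on a Zariski open $\tilde U\subset\tilde X$; passing to such an open subset corresponds to the finite level $\G_K/U_\alpha$.

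The final step is to make the choice uniform. Here I would use that $G$ and $M$ are finite, so each $H^i(G,M)$ is a finite group; since the range $0\le i\le\dim(X)$ is finite, the set $S:=\bigcup_{i=0}^{\dim(X)}\Ker(\sigma_i^*)$ is a finite collection of classes. Choosing $U_\alpha\subseteq N$ as above for each $\alpha\in S$ and setting $U:=\bigcap_{\alpha\in S}U_\alpha$ (still open, normal, and contained in $N$), I would take $\tilde G:=\G_K/U$, with $\tilde\sigma$ the quotient map and $\rho\colon\tilde G\to G$ the induced surjection. Because $U\subseteq U_\alpha$, the inflation $H^i(G,M)\to H^i(\tilde G,M)$ factors through $H^i(\G_K/U_\alpha,M)$ and therefore annihilates each $\alpha\in S$; this yields $\Ker(\sigma_i^*)\subseteq\Ker(\rho_i^*)$ for every $i$ in the range.

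I expect the only genuine subtlety to be this uniformity. The colimit argument produces, for each individual class, its own finite level $U_\alpha$, and the passage to the common refinement $\bigcap_\alpha U_\alpha$ is precisely what forces the degree bound $i\le\dim(X)$: it guarantees that $S$ is finite so that the intersection stays open. Without a bound on $i$ the union of kernels would in general be infinite — indeed, by the Remark above $\sigma_i^*$ vanishes identically for $i>\dim(X)$, so $\Ker(\sigma_i^*)$ is all of $H^i(G,M)$ there, which no finite $\tilde G$ could match via inflation — and no single finite quotient would detect every class at once. Everything else is formal: the compatibility of inflation maps and the fact that $\sigma$ factors through $\tilde G$ by construction.
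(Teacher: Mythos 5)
Your proposal is correct and follows essentially the same route as the paper's proof: each class killed by $\sigma_i^*$ already dies at some finite quotient of $\G_K$ (the colimit description of continuous cohomology), and the finiteness of the groups $H^i(G,M)$ over the finite range $0\le i\le\dim(X)$ lets one pass to a single common finite quotient $\tilde G$. Your write-up merely makes explicit the uniformity step (intersecting finitely many open normal subgroups) that the paper leaves implicit.
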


\begin{proof}
The cohomology classes in $H^i (\G_K, \Z/\ell)$ 
are represented by continuous cocycles 
(in the natural topology on $\G_K$).
Any element is induced from a finite group $H$. If it vanishes
it also vanishes on a finite quotient $\tilde{G}$ of $\G_K$ and
the maps  $\G_K \to \tilde{G} \to G$ are continuous.
Since the initial group $H^i(G, \Z/\ell)$ is finite there exists a
$\tilde{G}$ where all elements from $H^i(G , \Z/\ell)$, which
vanish on $\G_K$ are killed.
 \end{proof}

A special case of the above construction arises as follows:
let $\varrho\,:\, G\ra V$ be a 
faithful representation over an algebraically 
closed field $k$ and let $K=k(V)^G$ 
be the function field of the quotient. 
We have induced maps
$$
s_i^*\,:\, H^i(G,M)\ra H^i(\G_K,M)
$$
and we can define the {\em stable cohomology groups} over $k$: 
$$
H^i_{k,s}(G,M):=H^i(G,M)/\Ker(s_i^*),
$$ 
which we
will often identify with their image in $H^i(\G_K,M)$.

\begin{prop}
\label{prop:div}
The cohomology groups $H^i_{k,s}(G,M)$ 
\begin{itemize}
\item[(1)] do not depend on the representation;
\item[(2)] are functorial in $G$;
\item[(3)] are universal for $G$-actions:
for any $G$-variety $X$ over $k$ the homomorphism 
$H^i(G,M)\ra H^i(\G_{k(X)},M)$ factors through $H^i_{k,s}(G,M)$;
\item[(4)] if $M$ is an $\ell$-torsion module, then 
$$
H^i_{k,s}(G,M)=H^i_{k,s}(\Syl_{\ell}, M)^{\mathfrak N_{\ell}}
$$
where $\Syl_{\ell}=\Syl_{\ell}(G)$ is an $\ell$-Sylow subgroup of $G$
and  $\mathfrak N_{\ell}=\mathfrak N_{\ell}(G)$ its normalizer in $G$. 
\end{itemize}
\end{prop}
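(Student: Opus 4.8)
The plan is to establish (3) first, since (1) and (2) follow from it formally, and then to treat (4) separately by combining a transfer argument with the geometry of the tower of fixed fields of Sylow and normalizer.

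For (3), let $X$ be a $G$-variety; we may assume the action is almost free (otherwise replace $X$ by $X\times V^{\circ}$ with the diagonal action, which does not affect the map in question). Let $\alpha\in H^i(G,M)$ lie in $\Ker(s_i^*)$, so that $s_i^*(\alpha)$ vanishes on a dense $G$-stable open $U^{\circ}\subset V^{\circ}$, i.e. $\alpha$ restricts to $0$ in $H^i_{et}(U^{\circ}/G,M)$. I would apply Lemma~\ref{lemm:openU} to the free variety $X^{\circ}$ and the open $U^{\circ}\subset V$ to obtain a $G$-equivariant morphism $\phi\colon X^{\circ}\to V$ together with a dense $G$-stable open $X^{\circ\circ}\subset X^{\circ}$ satisfying $\phi(X^{\circ\circ})\subset U^{\circ}$. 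Passing to quotients, $\phi$ descends to $\bar\phi\colon X^{\circ\circ}/G\to U^{\circ}/G$ compatibly with the classifying maps to $BG$, so the image of $\alpha$ in $H^i_{et}(X^{\circ\circ}/G,M)$ equals $\bar\phi^{*}$ of its image in $H^i_{et}(U^{\circ}/G,M)$, which is $0$. Restricting to the generic point shows that the image of $\alpha$ in $H^i(\G_{k(X)},M)$ vanishes, i.e. $\Ker(s_i^*)$ is contained in the kernel of $H^i(G,M)\to H^i(\G_{k(X)},M)$; this is exactly (3). Assertion (1) is the special case $X=V'$ a second faithful representation, and (2) follows by pulling a faithful $G_2$-representation back along $G_1\to G_2$ and invoking (3), which shows the pullback $H^i(G_2,M)\to H^i(G_1,M)$ carries $\Ker(s^*_{G_2})$ into $\Ker(s^*_{G_1})$.

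For (4), write $P=\Syl_{\ell}$ and $N=\mathfrak N_{\ell}$, fix a faithful $V$, and set $K=k(V)^{G}$, $L=k(V)^{P}$, $L'=k(V)^{N}$, so that $k(V)/K$ is $G$-Galois. Then $[L:K]=[G:P]$ and $[L':K]=[G:N]$ are prime to $\ell$, while $L/L'$ is Galois with group $N/P$ of order prime to $\ell$. Since $M$ is $\ell$-torsion, restriction along any extension of degree prime to $\ell$ is injective on cohomology (corestriction after restriction is multiplication by the degree, a unit), so $\mathrm{res}\colon H^i_{k,s}(G,M)\to H^i_{k,s}(P,M)$ is injective with image in the $N$-invariants. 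For $L/L'$ the Hochschild--Serre spectral sequence degenerates because $|N/P|$ is prime to $\ell$, giving an isomorphism $H^i(\G_{L'},M)\cong H^i(\G_{L},M)^{N/P}$ and hence $H^i_{k,s}(N,M)\cong H^i_{k,s}(P,M)^{N}$; this settles the case $G=N$. It therefore remains to prove that $\mathrm{res}\colon H^i_{k,s}(G,M)\to H^i_{k,s}(N,M)$ is an isomorphism, the injectivity being already in hand.

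The main obstacle is the surjectivity of this last map, equivalently that fusion in stable cohomology is controlled by the normalizer $N$. I would deduce it from the double-coset (Mackey) formula
\[
\mathrm{res}^{G}_{N}\circ\mathrm{cor}^{G}_{N}=\sum_{g\in N\backslash G/N}\ \mathrm{cor}^{N}_{N\cap gNg^{-1}}\circ c_{g}\circ \mathrm{res}^{N}_{g^{-1}Ng\cap N},
\]
in which the term $g=e$ contributes the identity, recovering any given $N$-invariant class $w=\mathrm{res}(\mathrm{cor}(w))$, while for $g\notin N$ the term factors through restriction to a subgroup $N\cap gNg^{-1}$ whose $\ell$-Sylow is a proper subgroup of $P$. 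The crux, and the point where the geometric nature of stable cohomology is genuinely used (the analogous statement fails for ordinary group cohomology, where one needs full fusion stability rather than mere $N$-invariance), is the vanishing of these off-diagonal contributions in $H^{*}_{k,s}$. Granting that vanishing, $\mathrm{cor}^{G}_{N}$ is a section of restriction, so $\mathrm{res}$ is surjective and (4) follows. I expect verifying this off-diagonal vanishing — showing that stable classes pushed through proper, non-Sylow intersections die — to be the one step requiring real work rather than formal manipulation.
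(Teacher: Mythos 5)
Your arguments for (1)--(3) are essentially the paper's own: the same application of Lemma~\ref{lemm:openU} to produce a $G$-morphism from a dense open subset of $X$ into the open subset of $V$ on which the class already vanishes, followed by descent to quotients and restriction to the generic point, with (1) as the special case $X=V'$. One repair is needed in (2): for a non-injective homomorphism $f\colon G_1\to G_2$ the pulled-back representation $f^*V_2$ is not faithful, and (3) only yields $\Ker(s^*_{G_1})\subset \Ker\bigl(H^i(G_1,M)\to H^i(\G_{k(V_2/G_1)},M)\bigr)$, which is the opposite of the containment you need. One has to pass to the faithful $G_1$-representation $V_1\oplus f^*V_2$ and use (1) together with the inclusion of fixed fields $k(V_2)^{G_2}\subset k(V_1\oplus f^*V_2)^{G_1}$; this is exactly the paper's device $W_G=V_H\oplus V_G$. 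Minor, but the one-line justification as written does not close.

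The genuine gap is in (4), and you have located it yourself. The transfer formalities are fine: injectivity of $\mathrm{res}\colon H^i_{k,s}(G,M)\to H^i_{k,s}(\Syl_{\ell},M)$, image inside the ${\mathfrak N}_{\ell}$-invariants, and $H^i_{k,s}({\mathfrak N}_{\ell},M)\cong H^i_{k,s}(\Syl_{\ell},M)^{{\mathfrak N}_{\ell}}$ via Hochschild--Serre for the prime-to-$\ell$ quotient ${\mathfrak N}_{\ell}/\Syl_{\ell}$ (here $\Syl_{\ell}$ is normal, so no fusion issue arises). But this only proves an injection into the right-hand side of (4). The entire content of the statement --- that ${\mathfrak N}_{\ell}$-invariance, rather than full Cartan--Eilenberg fusion-stability, cuts out the image --- sits in the surjectivity, which you reduce to the vanishing in stable cohomology of the off-diagonal Mackey terms $\mathrm{cor}\circ c_g\circ\mathrm{res}$ through ${\mathfrak N}_{\ell}\cap g{\mathfrak N}_{\ell}g^{-1}$ for $g\notin{\mathfrak N}_{\ell}$, and then explicitly decline to prove. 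There is no formal reason for that vanishing: restricting a class to a subgroup with strictly smaller $\ell$-Sylow and corestricting back up is not zero in general, and nothing in the definition of $H^*_{k,s}$ visibly makes it so. As written, the proposal therefore does not establish (4).

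For comparison, the paper does not pass through double cosets at all. It splits $H^i(\Syl_{\ell},M)=H^i(\Syl_{\ell},M)^{{\mathfrak N}_{\ell}}\oplus R$ using the projector attached to the prime-to-$\ell$ action of ${\mathfrak N}_{\ell}/\Syl_{\ell}$, observes that the trace (corestriction to $G$) kills $R$ --- corestriction is insensitive to conjugation, so it annihilates every $(c_g-1)x$ --- carries this splitting into $H^i_{k,s}(\Syl_{\ell},M)$, and deduces the surjection from the invertibility of $\mathrm{cor}\circ\mathrm{res}$. One may fairly note that the paper is itself terse at exactly the point you flagged (one still needs $\mathrm{cor}$ to be injective on the invariant summand in stable cohomology), but in any case your route stops one genuine, non-formal lemma short of the statement, and that lemma is where the substance of (4) lies.
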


\begin{proof}
We apply Lemma~\ref{lemm:openU}.
Choosing an appropriate Zariski open $G$-invariant subvariety $X^{\circ}\subset X$
we can reduce to the affine case, with free $G$-action. Let $V^{\circ}\subset V$
be a Zariski open subset where the action of $G$ is free. Put
$\tilde{X}:=G\ba X^{\circ}$ and $\tilde{V}:=G\ba V^{\circ}$. 
We need to show that a class
$\alpha\in H^i(G,M)$ whose image in $H^i(\G_{k(\tilde{V})}, M)$ is zero
also vanishes in $H^i(\G_{k(\tilde{X})}, M)$. 
Such a class vanishes in $H^i_{et}(\tilde{U},M)$, where $\tilde{U}\subset \tilde{V}$ is 
an affine Zariski open subset. 
The preimage $U$ of $\tilde{U}$ in $V$ is a nonempty $G$-invariant affine Zariski open subset. 
Thus there exist an affine nonempty $G$-invariant Zariski open subset $U_X\subset X^{\circ}$
and a $G$-morphism $\phi_U\,:\, X^{\circ}\ra V$ such that $\phi_U(U_X)\subset U$. 
This descends to a morphism $\tilde{X}\supset \tilde{U}_X\ra \tilde{U}\subset \tilde{V}$.
The image of $\alpha$ under the composition
$$
H^i(G,M)\ra H^i_{et}(\tilde{U},M)\ra H^i_{et}(\tilde{U}_X,M)\ra H^i_{et}(\G_{k(\tilde{X})},M)
$$ 
is zero. This proves (3). Applying this to $X=V'$, for another faithful representation, 
we get (1). 

Property (2) is proved as follows: First, let $H\subset G$ be a subgroup and $V$ a faithful
$G$-representation. Consider the morphism $H\ba V\ra G\ba V$. A class vanishing on a Zariski
open subset of $G\ba V$ also vanishes on a Zariski open subset of $H\ba V$.  Next, let $G\ra H$ be
a surjective homomorphism and $V_G$, resp. $V_H$, a faithful representation of $G$, resp. $H$. 
Then $W_G:=V_H\oplus V_G$ is a faithful representation of $G$ and we have a commutative
diagram
$$
\begin{array}{ccc}
\G_{k(G\ba W_G)} & \ra   & \G_{k(H\ba V_H)} \\ 
\downarrow    &       & \downarrow \\ 
        G     & \ra   &  H,
\end{array} 
$$
giving natural maps on cohomology.

We proceed with the proof of Property (4).
Since $\ell$ and the cardinality of $G/\Syl_{\ell}$ are coprime, 
the map $\Syl_{\ell}\ba V\ra G\ba V$
induces an invertible map on the cohomology of the open subvarieties of 
$G\ba V$. 
The group  ${\mathfrak N}_{\ell}(G)/ \Syl_\ell(G)$ 
has order prime to $\ell$. The action of 
${\mathfrak N}_{\ell}(G)/ \Syl_\ell(G)$ on $M$ decomposes the 
module into a direct sum; so that  
$$
H^i(\Syl_{\ell}(G), M) =H^i(\Syl_{\ell}(G), M)^{{\mathfrak N}_{\ell}(G)} \oplus R,
$$ 
so that the restriction of the trace map is zero on the module $R$. 

We have  
$$
H^i(G,M)\stackrel{\sim}{\longrightarrow} 
H^i(\Syl_{\ell}(G),M)\subset  
H^i(\Syl_{\ell}(G), M)^{{\mathfrak N}(\Syl_{\ell}(G))}.
$$
Consider the image of  $r\in R$ in  $H^*_{k,s}(\Syl_{\ell}(G),M)$.
We get a direct decomposition
$H^i_s(\Syl_{\ell}(G), M)^{\mathfrak N(\Syl_{\ell}(G))} \oplus R_s$, with 
${\rm Tr}(r) = 0$.
Thus $H^i_{k,s}(G,M)$ surjects onto 
$H^i(\Syl_{\ell}(G), M)^{{\mathfrak N}(\Syl_{\ell}(G))}$,
and the map is an isomorphism.

 \end{proof}

\begin{lemm} 
Let $V$ be a representation space for a faithful representation of 
group $G$ over an algebraically closed field $k$.
Assume that $G\backslash V$ is isomorphic to affine space.  
Then any nontrivial element $\alpha\in H_{k,s}(G,\Z/\ell)$ 
has nontrivial restriction to the stable cohomology of 
a centralizer of a quasi-reflection in $G$.
\end{lemm}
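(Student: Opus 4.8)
The plan is to read off the statement from the geometry of the quotient map $\pi\,:\,V\to V/G=\A^n$ together with the residue formalism of this section. Since $G\backslash V$ is smooth, the finite morphism $\pi$ is ramified only in codimension one, along a branch divisor $D=\bigcup_j D_j\subset\A^n$; by the Chevalley--Shephard--Todd theorem (equivalently, by the codimension-one ramification theory of a quotient by a finite group) each $D_j$ is the image of a quasi-reflection hyperplane $H_j\subset V$, and the inertia group $I_j\subset G$ of the valuation $\nu_{H_j}$ is the cyclic group generated by a quasi-reflection $r_j$ fixing $H_j$ pointwise. I set $Z_j:=Z_G(r_j)$ and write $D_{H_j}:=\mathrm{Stab}_G(H_j)$ for the decomposition group, so that $I_j\trianglelefteq Z_j\subseteq D_{H_j}$.

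Given a nontrivial $\alpha\in H^i_{k,s}(G,\Z/\ell)$, I regard $s^*\alpha$ as a nonzero class in $H^i(\G_K,\Z/\ell)$ with $K=k(\A^n)$ and $i>0$. Since $G\backslash V\cong\A^n$ is rational, its unramified cohomology vanishes in positive degrees (being a stable-birational invariant, equal to that of a point), so $s^*\alpha$ has a nontrivial residue $\partial_\nu(s^*\alpha)\neq0$ for some divisorial valuation $\nu$. Because $s^*\alpha$ already extends to an \'etale class on the free locus $\tilde V=V^{\circ}/G$, whose divisorial complement is exactly $D$ (the higher-codimension strata being irrelevant by purity), this $\nu$ must be one of the $\nu_j$. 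Fix such a $j$ and put $r:=r_j$, $Z:=Z_j$.

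To see that $\alpha$ restricts nontrivially to $H^i_{k,s}(Z,\Z/\ell)$ I use functoriality of stable cohomology (Proposition~\ref{prop:div}): $s^*_Z(\mathrm{res}^G_Z\alpha)$ is the image of $s^*\alpha$ under the pullback $H^i(\G_K)\to H^i(\G_L)$ attached to the covering $Z\backslash V\to G\backslash V$, $L=k(Z\backslash V)$. Because $I_j\subseteq Z$, the entire inertia of $H_j$ is already consumed by $V\to Z\backslash V$, so $Z\backslash V\to G\backslash V$ is \emph{unramified} over the generic point of $D_j$; hence for a place $\nu'$ of $L$ over $\nu_j$ the residues are compatible with no ramification factor, $\partial_{\nu'}(\mathrm{res}_{L/K}s^*\alpha)=\mathrm{res}(\partial_{\nu_j}s^*\alpha)$, the right-hand $\mathrm{res}$ being restriction along the residue-field extension $\KK_{\nu'}/\KK_{\nu_j}$. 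It then suffices that this restriction not annihilate $\partial_{\nu_j}(s^*\alpha)$.

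This last point is the main obstacle. The extension $\KK_{\nu'}/\KK_{\nu_j}$ has degree $[D_{H_j}:Z]$, the index of the centralizer of $r$ in its hyperplane stabilizer, and restriction--corestriction makes the residue-field restriction injective as soon as this index is prime to $\ell$. For genuine reflections ($r$ of order $2$) one has $Z=D_{H_j}$ and the index is $1$; in general $[D_{H_j}:Z]$ divides $\phi(|I_j|)$, since $D_{H_j}$ acts on the cyclic $I_j$ through $(\Z/|I_j|)^{\times}$. When $G$ preserves a nondegenerate invariant form (as for the classical groups of interest, or whenever $\char k\nmid|G|$) the eigenline of $r$ is $H_j^{\perp}$, forcing $Z_G(r)=\mathrm{Stab}_G(H_j)$ and the factor disappears, which already covers the intended applications. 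The remaining modular cases, where $\ell\mid[D_{H_j}:Z]$, I expect to handle by descending induction on $\dim V$: the residue $\partial_{\nu_j}(s^*\alpha)$ lives over $k(D_j)=k(H_j/\bar G_j)$, where $\bar G_j=D_{H_j}/I_j$ acts on $H_j$ again as a quasi-reflection group (Steinberg), and it is itself a nonzero stable class for $\bar G_j$; the inductive conclusion for $\bar G_j$ then yields a quasi-reflection whose $G$-centralizer detects $\alpha$. The delicate steps there are the identification of the residue with a genuine stable class of $\bar G_j$ and the bookkeeping relating centralizers in $\bar G_j$ to centralizers in $G$.
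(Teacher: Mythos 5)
Your first half reproduces the paper's argument: $G\backslash V\cong\A^n$ forces the unramified cohomology of $K$ to vanish in positive degree, so a nonzero stable class has a nonzero residue at some divisorial valuation; purity and the fact that $s^*\alpha$ extends to an \'etale class on the free locus pin that valuation to a branch divisor, whose inertia is generated by a quasi-reflection. That part is fine (the paper's own proof in fact stops there and leaves the passage to the centralizer implicit). The problem is your second half: you reduce the centralizer statement to the injectivity of restriction along the residue-field extension $\KK_{\nu'}/\KK_{\nu_j}$, dispose of the case $\ell\nmid[\mathrm{Stab}_G(H_j):Z_G(r_j)]$ by restriction--corestriction, and then defer the ``modular case'' to a descending induction whose key steps you explicitly acknowledge you have not carried out. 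As written this is a genuine gap: the proof is not complete.

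The gap is, however, closable, and in fact the case you cannot handle never occurs. If $g\in G$ stabilizes the hyperplane $H_j$, it normalizes the inertia group $I_j$ and conjugates the quasi-reflection $r_j$ to some power $r_j^a$; but conjugation preserves the multiset of eigenvalues, and $r_j$ (eigenvalues $\zeta,1,\dots,1$ with $\zeta\neq1$) agrees with $r_j^a$ (eigenvalues $\zeta^a,1,\dots,1$) only if $\zeta^a=\zeta$, i.e.\ $r_j^a=r_j$. Hence $\mathrm{Stab}_G(H_j)=Z_G(r_j)$ for any semisimple quasi-reflection, with no hypothesis of an invariant form, and your index $[D_{H_j}:Z_j]$ is always $1$; the bound by $\phi(|I_j|)$ is correct but never attained. (Equivalently, and more structurally: since $\KK_{\nu}$ contains the algebraically closed field $k$, the cyclotomic action is trivial and $\G_{K_\nu}\cong\I_\nu\times\G_{\KK_\nu}$ is a direct product, so the image of the whole decomposition subgroup of $\G_K$ in $G$ must centralize the image of inertia; thus $s^*\alpha|_{\G_{K_\nu}}$ already factors through $Z_G(r_j)$, and a nonzero residue forces $\alpha|_{Z_G(r_j)}\neq0$ in $H^i_{k,s}(Z_G(r_j),\Z/\ell)$.) Either observation replaces your entire case analysis and the unfinished induction. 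You should also flag the wild case $\ell=\char k$, where inertia need not be cyclic and ``quasi-reflection'' must be interpreted to include transvections; the lemma is only applied away from that case.
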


\begin{proof} 
If $\alpha\in H^*_{k,s}(k(\A^n),\Z/\ell)$ is nontrivial, then the
residue of $\alpha$ is nontrivial 
on some irreducible divisor $D\subset \A^n=G\backslash V$
(see \cite{c-o}). The preimage of $D$ in $V$ is
a union of irreducible divisors $D_1,\ldots, D_r$.
For each $i$, there exists a nontrivial $\gamma_i\in G$ acting 
trivially on all points of $D_i$. Thus each
$D_i$ is a hyperplane in $V$. Hence $\gamma_i$ is a quasi-reflection.
 \end{proof}

\begin{coro}
Let $\mathcal W$ be a Weyl group. Then 
$$
H^i_{k,s}(\mathcal W,\Z/\ell)=0, \quad \text{  for all }
i>0 \,\, \text{ and all }\,\, \ell\neq 2.
$$
We have
$$
H^i_{k,s}(\mathcal W,\Z/2)\hookrightarrow \oplus_{\tau} 
H^i_{k,s}(\tau, (\Z/2)^{r_{\tau}}),
$$
where $\tau$ runs over the set of 2-elementary abelian subgroups
of $\mathcal W$, modulo conjugation, and $r_{\tau}\in \N$.
\end{coro}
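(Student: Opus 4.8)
The plan is to run an induction on $|\mathcal W|$, using the preceding Lemma to push a putative nonzero class down onto the centralizer of a reflection, and then to exploit the product structure of that centralizer to recurse. The starting point is the Chevalley--Shephard--Todd theorem: since $\mathcal W$ is generated by reflections, $k[V]^{\mathcal W}$ is a polynomial ring and $\mathcal W\ba V\cong\A^n$, so the hypothesis of the preceding Lemma is met. I first record two structural facts. Because the reflection representation of $\mathcal W$ is defined over $\Z$, the characteristic polynomial of any quasi-reflection $\gamma\in\mathcal W$ is $(t-1)^{n-1}(t-\zeta)$ with $\zeta\in\{\pm1\}$, whence $\zeta=-1$ and every quasi-reflection is an honest reflection $\gamma=s_\alpha$ of order $2$. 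Second, by Steinberg's theorem on stabilizers in reflection groups, the pointwise stabilizer $\mathcal W_\alpha:=\mathrm{Stab}_{\mathcal W}(\alpha)$ is the Weyl group of the subsystem $\{\beta\in\Phi:\beta\perp\alpha\}$, and since $s_\alpha$ commutes with $\mathcal W_\alpha$ but is not contained in it, the centralizer splits as a direct product
\[
C_{\mathcal W}(s_\alpha)=\langle s_\alpha\rangle\times\mathcal W_\alpha\cong \Z/2\times\mathcal W_\alpha,
\]
with $\mathcal W_\alpha$ a Weyl group of order $\le|\mathcal W|/2$ (trivial only in the rank-one case).

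For odd $\ell$ the induction closes at once. Suppose $0\neq\alpha\in H^i_{k,s}(\mathcal W,\Z/\ell)$ with $i>0$ and $\ell\neq2$. By the preceding Lemma, $\alpha$ restricts nontrivially to $H^i_{k,s}(C,\Z/\ell)$ for $C=\langle s\rangle\times\mathcal W_\alpha$ the centralizer of some reflection $s$. Property~(4) of Proposition~\ref{prop:div} identifies $H^i_{k,s}(C,\Z/\ell)$ with $H^i_{k,s}(\Syl_\ell(C),\Z/\ell)^{\mathfrak N_\ell(C)}$; since $\ell$ is odd the factor $\langle s\rangle$ contributes nothing to the $\ell$-Sylow and centralizes it, so this group equals $H^i_{k,s}(\mathcal W_\alpha,\Z/\ell)$. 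By the induction hypothesis applied to the smaller Weyl group $\mathcal W_\alpha$ this vanishes, contradicting nontriviality of the restriction. Hence $H^i_{k,s}(\mathcal W,\Z/\ell)=0$ for all $i>0$ and $\ell\neq2$.

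For $\ell=2$ the same reduction should detect classes on elementary abelian subgroups, but now the reflection $s$ can no longer be discarded. Restricting as before to $C=\langle s\rangle\times\mathcal W_\alpha$, I would invoke a Künneth decomposition for stable cohomology with $\Z/2$-coefficients — available because $\langle s\rangle\ba\A^1\cong\A^1$ is rational, so that $H^*_{k,s}(\langle s\rangle,\Z/2)=\Lambda(u)$ with $u^2=0$ over the algebraically closed field $k$ — to express the nonzero restriction of $\alpha$ through classes on $\mathcal W_\alpha$. Applying the corollary inductively to $\mathcal W_\alpha$ then detects those classes on elementary abelian $2$-subgroups $\tau'\subset\mathcal W_\alpha$; since $s$ commutes with $\mathcal W_\alpha$, each $\langle s\rangle\times\tau'$ is again an elementary abelian $2$-subgroup of $\mathcal W$. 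Iterating produces a subgroup $\tau$ generated by commuting reflections $s_{\alpha_1},\dots,s_{\alpha_{r_\tau}}$ on which $\alpha$ restricts nontrivially, which yields the asserted injection $H^i_{k,s}(\mathcal W,\Z/2)\hookrightarrow\bigoplus_\tau H^i_{k,s}(\tau,(\Z/2)^{r_\tau})$, the module $(\Z/2)^{r_\tau}$ being the one that carries the residues along the $r_\tau$ reflecting hyperplanes attached to $\tau$.

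I expect the main obstacle to be the $\ell=2$ case, and specifically the bookkeeping of the coefficient module $(\Z/2)^{r_\tau}$ together with the passage from mere detection to genuine injectivity. This will require a clean Künneth formula for the stable cohomology of the products $\Z/2\times\mathcal W_\alpha$ and a careful analysis of how the residue maps of the preceding Lemma act on coefficients along the several preimage hyperplanes $D_1,\dots,D_r$; controlling all these residues simultaneously, rather than one reflection at a time, is the delicate point. The odd-$\ell$ statement, by contrast, is a routine induction once Property~(4) is available.
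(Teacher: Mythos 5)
Your proof follows the paper's argument exactly: restrict a nonzero class to the centralizer of a quasi-reflection via the preceding Lemma, note that quasi-reflections have order $2$ and that their centralizers are products of copies of $\Z/2$ with smaller Weyl groups, and induct --- you merely supply the justifications (Chevalley--Shephard--Todd for $\mathcal W\ba V\cong \A^n$, Steinberg for the splitting $C_{\mathcal W}(s_\alpha)\cong \Z/2\times \mathcal W_\alpha$) that the paper's three-line proof leaves implicit. The $\ell=2$ bookkeeping you flag as delicate is likewise left unaddressed in the paper, so your proposal is faithful to, and somewhat more detailed than, the original.
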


\begin{proof}
The quasi-reflections in the standard faithful representation
of $\mathcal W$ have order 2. 
Their centralizers are products of powers of $\Z/2$ with
smaller Weyl groups.
It suffices to apply induction. 
 \end{proof}

\begin{rem}
It is possible to obtain a more precise vanishing result
following the approach for $\mathcal W=\mathfrak S_n$ 
in \cite{garibaldi}. 
\end{rem}

\section{Comparison with Serre's negligible classes}
\label{sect:comp}

Stable cohomology was defined by the first
author in \cite{b-ams} and \cite{b-mpi}.
J.P. Serre, in his 1990-1991 course at College de France  \cite{serre-col},
defined a related but somewhat different notion.

In his terminology, {\em negligible} elements 
$\alpha\in H^*(G,M)$ are those which are killed under every 
surjective homomorphism $\G_K \to G$, where $K=k(\tilde{X})$ is 
the function field of a quotient $\tilde{X}=G\ba X^{\circ}$.  
Negligible elements form an ideal
in the total ring $H^*(G,M)$. 

We are considering a smaller set of homomorphisms 
$\G_K \to G$, namely from Galois groups of fields of type $K=k(V)^G$, 
and the ideal of negligible classes defined by Serre is smaller. 
The resulting  groups are different 
(for example, for $\Z/2$-coefficients).
The quotient ring obtained by Serre's construction 
surjects onto the ring $H^*_{k,s}(G ,M)$, for any algebraically closed
$k$.

\section{Unramified cohomology} 
\label{sect:unram}

Here we study unramified cohomology, which was introduced in 
\cite{bogomolov} and \cite{c-o} 
(see also \cite{c-bir}).
Let $K=k(X)$ be a function field over an algebraically closed field $k$, 
and $M$ an \'etale sheaf on $X$.
For every divisorial valuation $\nu\in \Val_K$ of $K$ we have
a split exact sequence 
$$
1\ra \I_{\nu}\ra \G_{K_\nu}\ra \G_{\KK_{\nu}}\ra 1
$$
where $K_{\nu}$ is the completion of $K$ with respect to $\nu$, 
$\KK_{\nu}$ the residue field and $\I_{\nu}$ is the inertia group. 
This gives an exact sequence in Galois cohomology
$$
H^i(\G_{K_\nu}, M^{\I_{\nu}})\ra H^i(\G_{K_{\nu}}, M)
\stackrel{\delta_{\nu}}{\longrightarrow} H^{i-1}(\G_{\KK_{\nu}}, {}_{\I_{\nu}}M)
$$
where $M^{\I_{\nu}}$, resp. ${}_{\I_{\nu}}M$, are the sheaves of invariants, resp. coinvariants.  
Unramified cohomology is defined by 
$$
H^i_{k,un}(\G_K,M):=\cap_{\nu\in \Val_K} \Ker(\delta_{\nu})\subset H^i(\G_K,M).
$$

\begin{lemm}
\label{lemm:funct}
Let $\pi\,:\, X\ra Y$ be a surjective morphism of 
algebraic varieties over $k$, and $M$
an \'etale sheaf on $Y$. Then there is a natural homomorphism:
$$
\pi^*\,:\, H^i_{k,un}(Y,M)\ra H^i_{k,un}(X, \pi^*(M)).
$$
Moreover, if $\pi$ is finite, then 
there is a natural homomorphism
$$
\pi_*\,:\, H^i_{k,un}(X,\pi^*(M))\ra H^i_{k,un}(Y, M)
$$
and the composition $\pi_*\circ \pi^*$ 
is multiplication by the degree of $\pi$. 
\end{lemm}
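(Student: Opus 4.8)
The plan is to realize both maps on the absolute Galois groups of the function fields and then to check compatibility with every residue map $\delta_{\nu}$. Since $\pi$ is dominant it induces an inclusion $k(Y)\hookrightarrow k(X)$, hence a continuous homomorphism $\G_{k(X)}\ra \G_{k(Y)}$; the associated restriction on cohomology is the desired map $\pi^*\colon H^i(\G_{k(Y)},M)\ra H^i(\G_{k(X)},\pi^*M)$. When $\pi$ is finite, $k(X)/k(Y)$ is a finite extension, so $\G_{k(X)}$ is an open subgroup of $\G_{k(Y)}$ of index $\deg(\pi)$, and I would take $\pi_*$ to be the corestriction (transfer). Granting that both maps preserve the unramified subgroups, the identity $\pi_*\circ\pi^*=\deg(\pi)\cdot\mathrm{id}$ is then the standard formula $\mathrm{cor}\circ\mathrm{res}=[\G_{k(Y)}:\G_{k(X)}]$. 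Thus the only real content is that $\pi^*$ and $\pi_*$ carry unramified classes to unramified classes.

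For $\pi^*$, fix a divisorial valuation $w\in\Val_{k(X)}$ and let $v:=w|_{k(Y)}$ be its restriction; its value group is a subgroup of that of $w$, so $v$ is either trivial or again of rank one. If $v$ is trivial, then $k(Y)$ maps into the residue field $\KK_w$ and $\pi^*\alpha$ is inflated from the $\G_{\KK_w}$-quotient in the split sequence defining $\delta_w$, whence $\delta_w(\pi^*\alpha)=0$. If $v$ is nontrivial, I would compare the two split sequences
$$
1\ra \I_v\ra \G_{k(Y)_v}\ra \G_{\KK_v}\ra 1,\qquad
1\ra \I_w\ra \G_{k(X)_w}\ra \G_{\KK_w}\ra 1,
$$
which are related by the restriction homomorphism; on the (prime-to-$\char(k)$ part of the) inertia this homomorphism is multiplication by the ramification index $e(w/v)$. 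The resulting functoriality of residues reads $\delta_w(\pi^*\alpha)=e(w/v)\cdot \ovl{\pi}^{\,*}(\delta_v\alpha)$, where $\ovl{\pi}^{\,*}$ is induced by $\KK_v\hookrightarrow\KK_w$; since $\alpha$ is unramified, $\delta_v\alpha=0$ and hence $\delta_w(\pi^*\alpha)=0$.

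For $\pi_*$ (with $\pi$ finite), fix $v\in\Val_{k(Y)}$ and let $w_1,\dots,w_r\in\Val_{k(X)}$ be the finitely many valuations lying over it; the residue extensions $\KK_v\hookrightarrow\KK_{w_j}$ are finite, so each $w_j$ is again divisorial. Compatibility of corestriction with residues gives $\delta_v(\pi_*\beta)=\sum_j \ovl{\pi}_{*,j}(\delta_{w_j}\beta)$, a sum of transfers along these residue extensions; if $\beta$ is unramified then every $\delta_{w_j}\beta=0$ and so $\delta_v(\pi_*\beta)=0$. The main obstacle is precisely establishing these two residue-functoriality formulas and the attendant valuation-theoretic bookkeeping: one must verify that restricting (resp. extending) a divisorial valuation across the finite extension again yields divisorial or trivial valuations, and track the ramification index through the prime-to-$\char(k)$ inertia. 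In the general (non-finite) case of $\pi^*$ one further invokes the standard fact that a class unramified along all divisorial valuations has trivial residue along every rank-one valuation, which is what guarantees $\delta_v\alpha=0$ for an arbitrary restricted valuation $v$.
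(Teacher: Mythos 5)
Your proposal is correct and follows essentially the same route as the paper: identify $\pi^*$ with restriction along $\G_{k(X)}\to\G_{k(Y)}$ and check, valuation by valuation, that the residue of a pulled-back class vanishes by comparing inertia subgroups in the two cases (restricted valuation trivial, or again of rank one). You are in fact more complete than the published proof, which only treats the $\pi^*$ half and tacitly assumes the restricted valuation is again divisorial; your corestriction--residue compatibility for $\pi_*$ and your appeal to the fact that divisorially unramified classes have trivial residues along all rank-one valuations supply exactly the details the paper leaves implicit.
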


\begin{proof}
We have an embedding $\pi^*\, :\, k(X)\hookrightarrow k(Y)$
of function fields and 
a the corresponding map $\pi_*\,:\, \G_{k(Y)} \ra \G_{k(X)}$
of Galois groups.
A divisorial valuation $\nu$ of $k(Y)$ is either 
trivial on $\pi^*(k(X))$ or defines
a divisorial valuation $\nu^*$ on $k(X)$.
If $\nu$ is trivial on $\pi^*(k(X))$, then $\pi_*(\I_{\nu})$
for the inertia subgroup $\I_{\nu} \subset \G_{k(Y)_{\nu}}$
and hence $\delta_{\nu}$ is zero on $\pi^* H^*(\G_{k(X)},M)$.
If $\nu$ on $\pi^*(k(X))$ coincides with $\nu^*$ then
under the induced map $\pi_*\, :\, \G_{k(Y)_{\nu}} \ra \G_{k(X)_{\nu'}}$
we have $\pi_*(\I_{\nu}) \subset \I_{\nu^*}$.
Thus $\delta_{\nu^*} (\alpha) = 0$ implies that 
$\delta_{\nu} \pi^*(\alpha) = 0$
for all $\alpha\in H^*_{k,un}(\G_{k(X)},M)$.
 \end{proof}

\

Let $G$ be a finite group and $V$ a faithful $G$-representation as above. 
Let $K=k(V)^G$ be the function field of the quotient.
We can consider its stable cohomology groups 
$H^i_{k,s}(G,M)$ as subgroups of $H^i(\G_K, M)$.
Define {\em unramified cohomology groups}
$$
H_{k,un}(G,M):=H^i_{k,s}(G,M)\cap H^i_{k,un}(\G_K,M).
$$

\begin{prop}
\label{prop:unramcoh}
Assume that $\char(k)\nmid |M|$.
Then the unramified cohomology groups $H^i_{k,un}(G,M)$
\begin{itemize}
\item do not depend on the representation $V$;
\item are functorial in $G$.
\end{itemize} 
\end{prop}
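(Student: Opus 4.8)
The plan is to separate the \emph{stable} and \emph{unramified} conditions. By definition $H^i_{k,un}(G,M)=H^i_{k,s}(G,M)\cap H^i_{k,un}(\G_K,M)$ with $K=k(V)^G$, and the stable part $H^i_{k,s}(G,M)$ is already known to be independent of $V$ and functorial in $G$ by Proposition~\ref{prop:div}. Hence in both assertions it remains only to check that the \emph{extra} condition of being unramified is preserved under the relevant comparisons. Throughout, the hypothesis $\char(k)\nmid|M|$ is precisely what guarantees that the residue maps $\delta_\nu$, the splitting of the inertia sequence, and the transfer of Lemma~\ref{lemm:funct} behave well.

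For independence of the representation, let $V,V'$ be two faithful $G$-representations and set $K=k(V)^G$, $K'=k(V')^G$, $L=k(V\oplus V')^G$. The two coordinate projections $V\oplus V'\to V$ and $V\oplus V'\to V'$ are $G$-equivariant and descend to dominant morphisms $\pi\colon G\ba(V\oplus V')\to G\ba V$ and $\pi'\colon G\ba(V\oplus V')\to G\ba V'$. Over the free locus these are vector bundles, so by Hilbert~90 they are generically trivial and $L$ is a purely transcendental extension of both $K$ and $K'$. Identifying $H^i_{k,s}(G,M)$ computed via $V$, $V'$ and $V\oplus V'$ as in Proposition~\ref{prop:div}, its images in $H^i(\G_K,M)$, $H^i(\G_{K'},M)$ and $H^i(\G_L,M)$ are intertwined by $\pi^*$ and $(\pi')^*$. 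It therefore suffices to show that, for a stable class $\alpha$, being unramified over $K$ is equivalent to being unramified over $L$ (and similarly for $K'$); combining these equivalences for $\pi$ and $\pi'$ then gives that $\alpha$ is unramified over $K$ iff it is unramified over $K'$. One implication is immediate from Lemma~\ref{lemm:funct}: if $\alpha$ is unramified over $G\ba V$ then $\pi^*\alpha$ is unramified over $G\ba(V\oplus V')$.

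The reverse implication — that a class which becomes unramified after adjoining the coordinates of $V'$ was already unramified over $K$ — is the heart of the matter and the step I expect to be the \textbf{main obstacle}. This is the invariance of unramified cohomology under a purely transcendental extension (equivalently, its stable birational invariance for $\ell$ prime to the characteristic). I would prove it by analyzing the divisorial valuations of $L=K(t_1,\dots,t_m)$: valuations trivial on $K$ contribute nothing, since by the argument of Lemma~\ref{lemm:funct} their inertia maps into the inertia of $K$ and $\delta_\nu$ kills $\pi^*H^*$; while for a divisorial $\nu$ of $K$ its Gauss extension $\tilde\nu$ to $L$ has residue field $\KK_\nu(t_1,\dots,t_m)$, and $\delta_{\tilde\nu}(\pi^*\alpha)$ is the pullback of $\delta_\nu(\alpha)$ along a purely transcendental extension of residue fields. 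Since that residue-field pullback is injective (Faddeev/Gersten sequence for $\G_{K(t)}$, using $\char(k)\nmid|M|$), vanishing of all residues of $\pi^*\alpha$ forces $\delta_\nu(\alpha)=0$ for every $\nu$, i.e.\ $\alpha$ is unramified over $K$.

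For functoriality in $G$ I treat the two elementary maps separately, in each case refining the corresponding step in the proof of Proposition~\ref{prop:div}(2). For a subgroup $H\subset G$ and a faithful $G$-representation $V$, the finite morphism $H\ba V\to G\ba V$ of degree $[G:H]$ yields, by Lemma~\ref{lemm:funct}, a pullback $H^i_{k,un}(G\ba V,M)\to H^i_{k,un}(H\ba V,M)$ compatible with restriction on stable classes, so restriction preserves the unramified subgroup. For a surjection $G\twoheadrightarrow H$, choose faithful representations $V_G$ of $G$ and $V_H$ of $H$ and put $W_G=V_H\oplus V_G$, a faithful $G$-representation; by the representation-independence just established we may compute $H^i_{k,un}(G,M)$ from $W_G$. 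The $G$-equivariant projection $W_G\to V_H$ induces a dominant morphism $G\ba W_G\to H\ba V_H$, whose pullback $\pi^*$ from Lemma~\ref{lemm:funct} carries unramified classes on the $H$-side to unramified classes on the $G$-side, and the commutative diagram of Proposition~\ref{prop:div}(2) identifies this $\pi^*$ with inflation on stable cohomology. Since every homomorphism of finite groups factors as a surjection followed by an injection, these two cases give full functoriality of $H^i_{k,un}(G,M)$.
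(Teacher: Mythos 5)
Your argument is correct, and its skeleton is the same as the paper's: both proofs hinge on the diagram of the two projections $G\ba(V\oplus V')\to G\ba V$ and $G\ba(V\oplus V')\to G\ba V'$, observe that these are vector bundles over the quotients (hence purely transcendental on function fields by the no-name argument), and reduce everything to the invariance of the unramified condition under such extensions. Where you genuinely diverge is in how that invariance is justified: the paper disposes of it in one line by citing Colliot-Th\'el\`ene--Sansuc \cite{ct} for constant coefficients and asserting that a ``small modification'' treats general $M$, whereas you prove it directly --- sorting the divisorial valuations of $L=K(t_1,\dots,t_m)$ into those trivial on $K$ (killed as in Lemma~\ref{lemm:funct}) and Gauss extensions of valuations $\nu$ of $K$, for which $\partial_{\tilde\nu}(\pi^*\alpha)$ is the restriction of $\partial_\nu(\alpha)$ along the purely transcendental residue-field extension $\KK_{\nu}\subset\KK_{\nu}(t_1,\dots,t_m)$, and then invoking injectivity of that restriction (which is where $\char(k)\nmid|M|$ enters, via a rational specialization or the Faddeev sequence). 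This buys a self-contained and uniform treatment of general coefficient modules, at the cost of having to state the residue-compatibility and injectivity facts explicitly. You also spell out functoriality in $G$ (restriction via the finite map $H\ba V\to G\ba V$, inflation via $G\ba(V_H\oplus V_G)\to H\ba V_H$, then factoring an arbitrary homomorphism), which the paper's proof leaves entirely implicit; your treatment there correctly reuses Lemma~\ref{lemm:funct} and the diagram from the proof of Proposition~\ref{prop:div}(2).
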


\begin{proof}
Let $V,V'$ be two faithful representations of $G$ and consider the diagram:
$$
\begin{array}{ccc}
G\ba (V\times V') & \ra & G\ba V' \\
\downarrow     &     &   \\
G\ba V.            &     &
\end{array}
$$ 
The case of constant coefficients follows from \cite{ct} 
and the observation that both arrows in the above diagram 
are natural vector bundles on the quotients. A small modification of the argument
proves the claim for a general $G$-module $M$. 
 \end{proof}

\section{General vanishing results}
\label{sect:gen-van}

In this section, we work over $k=\bar{\F}_p$. 
Here we collect general arguments proving the triviality of 
stable and unramified cohomology groups.

\begin{thm}
\label{thm:char-p}
Let $G$ be a finite group and 
$M$ a finite $p$-torsion $G$-module.  
Then, $H^i_{k,s}(G,M)=0$, for all $i>0$. 
\end{thm}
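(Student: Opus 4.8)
The plan is to separate the degrees $i\ge 2$, where the target of $s_i^*$ already vanishes, from the delicate degree $i=1$.

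For $i\ge 2$ I would argue that $H^i(\G_K,M)$ is itself zero. The field $K=k(V)^G$ has characteristic $p$, and by Artin--Schreier theory the cohomological $p$-dimension of the absolute Galois group of any field of characteristic $p$ is at most $1$. Hence $H^i(\G_K,M)=0$ for every $i\ge 2$ and every finite $p$-torsion module $M$, regarded as a $\G_K$-module through $s\colon \G_K\to G$. Consequently $s_i^*=0$ and $H^i_{k,s}(G,M)=0$ for all $i\ge 2$. This step uses no hypothesis on $G$ and none of the geometry of $\bar{\F}_p$ beyond $\mathrm{char}(K)=p$.

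To handle $i=1$ I would first reduce the coefficients. Since $M$ is $p$-torsion, Proposition~\ref{prop:div}(4) allows me to replace $G$ by an $\ell$-Sylow subgroup for $\ell=p$ (recovering $H^1_{k,s}(G,M)$ as $\mathfrak N$-invariants), so I may assume $G$ is a finite $p$-group. A finite $p$-group acting on a nonzero finite $p$-group has nonzero fixed points, so $M$ carries a filtration by $G$-submodules whose successive subquotients are copies of $\Z/p$ with trivial action. Feeding the already-established vanishing in degrees $\ge 2$ into the long exact sequences for $H^*(G,-)$ and $H^*(\G_K,-)$ and chasing the (compatible) maps $s^*$, I would propagate the degree-$1$ vanishing along this filtration and thereby reduce to $M=\Z/p$ with trivial action.

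The crux is then $H^1_{k,s}(G,\Z/p)$ for a $p$-group $G$. Here $H^1(G,\Z/p)=\Hom(G,\Z/p)$, and $s_1^*$ carries a character $\chi$ to the Artin--Schreier class in $H^1(\G_K,\Z/p)=K/\wp(K)$ of the $\Z/p$-cover obtained by pushing the $G$-cover $V^{\circ}\to V^{\circ}/G$ forward along $\chi$. Showing that these classes vanish in $K/\wp(K)$ is the step I expect to be the real obstacle: the pushed-forward cover is a nontrivial separable cover at the generic point, so any argument must exploit the special geometry available over $\bar{\F}_p$ rather than a formal dimension count. Here I would invoke the rationality of the relevant quotients furnished by the affine-bundle towers of Section~\ref{sect:class} (so that $K$ is purely transcendental over $k$) together with the $\mathfrak N$-invariance produced by the Sylow reduction, reducing finally to $G=\Z/p$ and analysing the explicit Artin--Schreier class directly.
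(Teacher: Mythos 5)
Your treatment of the degrees $i\ge 2$ is exactly the paper's proof: the paper disposes of the entire statement with a single citation to Serre's \emph{Cohomologie galoisienne} (Chapter~II, Proposition~3), which is precisely the fact you invoke — $\mathrm{cd}_p(\G_K)\le 1$ for any field $K$ of characteristic $p$ — applied to the finite $p$-torsion module $M$ viewed as a $\G_K$-module via $s$. So for $i\ge 2$ your argument and the paper's coincide and are complete.

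For $i=1$ you have correctly located a genuine problem, and you should trust your suspicion that the ``real obstacle'' you name is insurmountable: in degree $1$ the statement is false as literally written, so neither your sketch nor the paper's citation can be completed into a proof. Since $G$ acts faithfully on $V$, the map $s:\G_K\to G$ is surjective, and inflation along a surjection is injective on $H^1$ (inflation--restriction, using that $\Ker(s)$ acts trivially on $M$). Hence $\Ker(s_1^*)=0$ and $H^1_{k,s}(G,M)=H^1(G,M)$, which is nonzero already for $G=\Z/p$ and $M=\Z/p$ with trivial action. In your Artin--Schreier picture, $s_1^*(\chi)$ for $\chi\ne 0$ is the class of a connected degree-$p$ cover of the generic point, hence nonzero in $K/\wp(K)$; the rationality of $K$ that you hope to exploit cannot help, because $K/\wp(K)\neq 0$ for every field of positive transcendence degree over $\bar{\F}_p$ (a function with a simple pole along a divisor is never of the form $f^p-f$). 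The theorem should therefore be read as a statement about $i\ge 2$, which is all that Serre's proposition yields and all the paper actually needs: in its applications $G$ is quasi-simple, hence perfect, so the degree-one groups vanish for the elementary reason that $\Hom(G,\Z/p)=0$.
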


\begin{proof}
See \cite[Chapter 2, Proposition 3]{serre-cohgal}.
 \end{proof}

The main reason for introducing the unramified cohomology group is:

\begin{thm}
\label{thm:reason}
Let $V$ be a faithful representation of $G$. 
If $K=k(V)^G$ is a purely transcendental extension of $k$, then, 
for all $i>0$, we have 
$$
H^i_{k,un}(G,\Z/\ell)=0.
$$
\end{thm}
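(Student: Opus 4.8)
The plan is to reduce the statement to the vanishing of the \emph{full} unramified cohomology of $K$ and then to quote the stable birational invariance of unramified cohomology. By the definition recalled in Section~\ref{sect:unram}, $H^i_{k,un}(G,\Z/\ell)=H^i_{k,s}(G,\Z/\ell)\cap H^i_{k,un}(\G_K,\Z/\ell)$, so in particular $H^i_{k,un}(G,\Z/\ell)\subseteq H^i_{k,un}(\G_K,\Z/\ell)$. Hence it suffices to show that $H^i_{k,un}(\G_K,\Z/\ell)=0$ for all $i>0$, using only that $K\cong k(t_1,\dots,t_n)$ is purely transcendental over $k$.

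First I would dispose of the case $\ell=p$. Here Theorem~\ref{thm:char-p} already gives $H^i_{k,s}(G,\Z/p)=0$ for all $i>0$, so the intersection defining $H^i_{k,un}(G,\Z/p)$ is zero and there is nothing more to prove. I therefore assume for the rest of the argument that $\ell$ is prime to $p=\char(k)$.

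For such $\ell$ the coefficient module $\Z/\ell$ is constant, and I would invoke the classical fact that, for smooth proper $k$-varieties $X$ and finite coefficients prime to $\char(k)$, the group $H^i_{k,un}(\G_{k(X)},\Z/\ell)$ is a stable birational invariant of $X$; this rests on the residue sequences recorded in Section~\ref{sect:unram} together with purity for \'etale cohomology, and is exactly the input from \cite{ct} and \cite{c-o}. Since $K$ is purely transcendental, a smooth proper model of $K$ is stably birational to a point, and the unramified cohomology of a point is the ordinary Galois cohomology of $k$; hence $H^i_{k,un}(\G_K,\Z/\ell)\cong H^i(\G_k,\Z/\ell)$. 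As $k=\bar{\F}_p$ is algebraically closed, its absolute Galois group is trivial, and the right-hand side vanishes for $i>0$. This yields $H^i_{k,un}(\G_K,\Z/\ell)=0$ and completes the proof.

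The genuine content is concentrated in the invariance statement, so the step I expect to be the main obstacle is verifying that the unramified condition in $H^i_{k,un}(\G_K,\Z/\ell)$, which ranges over \emph{all} divisorial valuations of $K/k$, matches the valuations accounted for in the homotopy-invariance computation underlying stable birational invariance. Concretely one factors the extension $k\subset K$ into a tower of one-variable extensions; for each step a class unramified along all divisors of $\P^1$ over the previous field has vanishing residues at the closed points of the generic fiber, which by the residue sequence forces it to descend to the base, while pullbacks are visibly unramified. This matching is classical and available in the cited references; once it is granted, the remainder of the argument is formal.
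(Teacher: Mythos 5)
Your argument is correct and is essentially the paper's own proof: the paper simply cites \cite[Corollary 1.2.1]{c-o}, which is precisely the vanishing of $H^i_{k,un}(\G_K,\Z/\ell)$ for $K$ purely transcendental over an algebraically closed field, and your reduction via the containment $H^i_{k,un}(G,\Z/\ell)\subseteq H^i_{k,un}(\G_K,\Z/\ell)$ together with the one-variable residue computation is exactly the content of that citation. Your separate treatment of $\ell=p$ via Theorem~\ref{thm:char-p} is a reasonable extra precaution that the paper leaves implicit.
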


\begin{proof}
Immediate from \cite[Corollary 1.2.1]{c-o}.
 \end{proof}

\begin{thm}[Lang]
\label{thm:lang}
Let $\mathsf G$ be an algebraic group over $k$.
Let $F$ be an automorphism of $\mathsf G(k)$ 
which is a composition of an element in $\Aut(\mathsf G)(k)$
and a Frobenius of $k=\bar{\F}_p$. 
Let $G=G^{F}\subset \mathsf G(k)$ be the finite subgroup  
fixed by $F$. Then $G\ba \mathsf G\simeq \mathsf G$, 
hence is a rational variety.  
\end{thm}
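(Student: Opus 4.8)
The plan is to exhibit the classical \emph{Lang map} as an explicit separable quotient morphism realizing the isomorphism $G\ba \mathsf G\simeq \mathsf G$. Define
$$
L\,:\, \mathsf G \ra \mathsf G, \qquad g\mapsto g^{-1}F(g).
$$
First I would identify the fibers of $L$ with the orbits of the left-translation action of $G$. Computing $L(g)=L(h)$ gives $hg^{-1}=F(hg^{-1})$, i.e. $hg^{-1}\in \mathsf G^{F}=G$, so $L$ is constant exactly on the left cosets $Gg$ and descends to an injection on $k$-points of $G\ba \mathsf G$. Since the left action of the finite group $G$ by translation is free, the quotient $G\ba \mathsf G$ exists as a smooth variety and $L$ factors through it.

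The main point, and the step I expect to be the genuine obstacle, is the surjectivity of $L$. I would deduce it from the hypothesis that $F$ is the composite of an element of $\Aut(\mathsf G)(k)$ with a Frobenius, which forces the differential $dF$ to vanish at every point. Consider the twisted conjugation action $g*x = g\,x\,F(g)^{-1}$ of $\mathsf G$ on itself. The orbit map through a point $x$ has differential at the identity equal to $X\mapsto Xx - x\,dF(X)=Xx$, hence is a submersion, and by homogeneity the orbit map is smooth everywhere; therefore every orbit is open. Because $\mathsf G$ is connected and is the disjoint union of its orbits, there is a single orbit, so the orbit of $e$ is all of $\mathsf G$. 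This says precisely that $g\mapsto gF(g)^{-1}$, equivalently $L$, is surjective. The vanishing of $dF$ is exactly what distinguishes a Frobenius-type endomorphism from an arbitrary automorphism and makes these orbit maps submersive; this is the crux of the argument.

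With surjectivity in hand I would verify that $L$ is \'etale: the same differential computation shows $dL$ is an isomorphism at $e$, and translating by the action shows $L$ is a submersion at every point, while the fibers are the finite sets $Gg$. Thus $L$ is a surjective, separable, $G$-invariant morphism whose fibers are exactly the $G$-orbits, so it \emph{is} the quotient map $\mathsf G\to G\ba\mathsf G$ up to isomorphism; this yields $G\ba \mathsf G\simeq \mathsf G$.

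Finally, rationality follows because $\mathsf G$ itself is rational: for a connected reductive group the big cell of the Bruhat decomposition is a dense open subvariety isomorphic to $U^-\times T\times U^+$, a product of affine spaces and a torus, hence rational. Invoking this standard fact completes the proof that $G\ba\mathsf G$ is rational.
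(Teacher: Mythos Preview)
Your proof is correct and follows essentially the same route as the paper: both exhibit the Lang map (the paper uses $\tau(x)=F(x)^{-1}x$, you use the equivalent $L(g)=g^{-1}F(g)$), identify its fibers with the left $G$-cosets, and invoke the vanishing of $dF$ to obtain surjectivity via the Lie-algebra/differential computation. Your argument is simply a more fleshed-out version of the paper's terse sketch; the only cosmetic discrepancy is that your rationality remark is phrased for reductive $\mathsf G$, whereas the relevant (standard) fact is that any connected linear algebraic group over $\bar{\F}_p$ is rational.
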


\begin{proof}
Consider the map 
$$
\begin{array}{rcc}
\tau : \mathsf G & \to &  \mathsf G\\
       x &  \to & F(x)^{-1} x.
\end{array}
$$
The action of $\tau$ on the Lie algebra of $\mathsf G$ 
is surjective with kernel a finite subgroup $G=G^{F}$. 
Note that $\tau$ coincides with
the composition 
$$
\tau: \mathsf G\to G\ba \mathsf G \to \mathsf  G.
$$ 
Indeed, if $\tau(x)=\tau(y)$, then
$F(x)^{-1} x = F(y)^{-1} y$, or
$F(xy^{-1})^{-1} xy^{-1}=1$, or $xy^{-1}\in G$.
If follows that $x = gy, g\in G$. The converse is clear. 
Thus $G\ba \mathsf G$ is rational. 
 \end{proof}

\begin{lemm}
\label{lemm:vanish}
Let $\ell$ be a prime and 
$\pi\,:\, X\ra Y$ a separable morphism $k$-varieties 
of finite degree prime to $\ell$. 
Assume that $H^i_{k,un}(X,\Z/\ell)=0$. 
Then $H^i_{k,un}(Y,\Z/\ell)=0$.
\end{lemm}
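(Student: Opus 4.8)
The plan is to exploit the transfer (corestriction–restriction) relationship in unramified cohomology that was established in Lemma~\ref{lemm:funct}, together with the coprimality hypothesis on the degree. Since $\pi$ is finite and separable, Lemma~\ref{lemm:funct} gives me both a pullback $\pi^*\colon H^i_{k,un}(Y,\Z/\ell)\to H^i_{k,un}(X,\Z/\ell)$ and a pushforward $\pi_*\colon H^i_{k,un}(X,\Z/\ell)\to H^i_{k,un}(Y,\Z/\ell)$, and the composition $\pi_*\circ\pi^*$ is multiplication by $\deg(\pi)$. This is exactly the setup where a standard transfer argument forces vanishing.

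First I would take any class $\alpha\in H^i_{k,un}(Y,\Z/\ell)$. By the hypothesis $H^i_{k,un}(X,\Z/\ell)=0$, its pullback satisfies $\pi^*(\alpha)=0$ in $H^i_{k,un}(X,\Z/\ell)$. Applying $\pi_*$ then gives $\pi_*(\pi^*(\alpha))=\pi_*(0)=0$. But by Lemma~\ref{lemm:funct} this composition equals multiplication by $\deg(\pi)$, so $\deg(\pi)\cdot\alpha=0$ in $H^i_{k,un}(Y,\Z/\ell)$.

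Next I would use the coprimality assumption: since $\deg(\pi)$ is prime to $\ell$ and $\alpha$ lives in an $\ell$-torsion group $H^i_{k,un}(Y,\Z/\ell)$, multiplication by $\deg(\pi)$ is invertible on that group. Concretely, choose an integer $m$ with $m\cdot\deg(\pi)\equiv 1\pmod{\ell}$; then $\alpha=m\cdot\deg(\pi)\cdot\alpha=m\cdot 0=0$. As $\alpha$ was arbitrary, $H^i_{k,un}(Y,\Z/\ell)=0$, which is the claim.

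The only real point requiring care — and the one I expect to be the main obstacle — is verifying that the hypotheses of Lemma~\ref{lemm:funct} genuinely apply, namely that $\pi$ being a separable finite morphism of finite degree is enough to produce the transfer map $\pi_*$ with the stated property $\pi_*\circ\pi^*=\deg(\pi)$. Separability is what guarantees a well-behaved trace on \'etale cohomology with $\Z/\ell$-coefficients, and finiteness is what makes the pushforward defined; once these are in hand the degree formula is exactly what Lemma~\ref{lemm:funct} records. Everything else is the formal transfer argument, so no substantive calculation remains.
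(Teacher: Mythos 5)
Your proposal is correct and is essentially the paper's own argument spelled out: the paper's proof is precisely ``immediate from Lemma~\ref{lemm:funct}'' via the transfer identity $\pi_*\circ\pi^*=\deg(\pi)$ and the invertibility of $\deg(\pi)$ on an $\ell$-torsion group. Your closing remark about checking that a separable morphism of finite degree satisfies the hypotheses of Lemma~\ref{lemm:funct} is a fair point of care, but the paper treats it the same way you do.
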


\begin{proof}
Immediate from Lemma~\ref{lemm:funct}: the 
degree $\deg(\pi)$ is prime to $\ell$ and 
multiplication by $\deg(\pi)$ is invertible on $H^i_{k,un}(Y,\Z/\ell)$.
 \end{proof}

This lemma will be applied to $Y=G\ba V$. The goal will be to construct $X$
with vanishing unramified cohomology.

\begin{coro}
\label{coro:lang}
Let $G=G^F\subset \mathsf G(k)$ be as above.  
Let $X$ be a $G$-linear variety over $k$. Assume that there exist
a variety $Y$ over $k$, with trivial $G$-action, 
and a $G$-equivariant finite
morphism
$$
\pi\,:\,  \mathsf G\times Y\ra X.
$$
Let $S$ be the set of all primes dividing the degree of $\pi$. 
Then 
$$
H^i_{k,un}(G, \Z/\ell)=0, \,\, 
\text{ for all  } i>0\,\,\text{ and }\,\, \ell\notin S\cup \{p\}.
$$
\end{coro}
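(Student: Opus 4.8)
The plan is to transfer the problem onto a variety built from $\mathsf G$, and then kill the class using Lang's theorem. Fix $\alpha\in H^i_{k,un}(G,\Z/\ell)$ with $i>0$. Since $X$ is $G$-linear, Proposition~\ref{prop:unramcoh} lets me compute $H^i_{k,un}(G,\Z/\ell)$ on $G\ba X$, so I regard $\alpha$ as a class in $H^i(\G_{k(G\ba X)},\Z/\ell)$; being stable, $\alpha=c_X(\tilde\alpha)$ for some $\tilde\alpha\in H^i(G,\Z/\ell)$, where $c_Z\,:\,H^i(G,\Z/\ell)\ra H^i(\G_{k(G\ba Z)},\Z/\ell)$ denotes the canonical map of Proposition~\ref{prop:div}(3) attached to a $G$-variety $Z$ (it is functorial for $G$-morphisms). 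Because $G$ acts freely on $\mathsf G$ by left translation and trivially on $Y$, one has $G\ba(\mathsf G\times Y)=(G\ba\mathsf G)\times Y$, and $\pi$ descends to a finite morphism
$$
\bar\pi\,:\,(G\ba\mathsf G)\times Y\ra G\ba X .
$$
Comparing the two degree-$|G|$ quotient maps in the resulting commutative square gives $\deg(\bar\pi)=\deg(\pi)$, which is prime to $\ell$ by hypothesis. By Lemma~\ref{lemm:funct}, $\bar\pi_*\circ\bar\pi^*$ is multiplication by $\deg(\bar\pi)$, invertible on $\Z/\ell$-coefficients; hence it suffices to prove $\bar\pi^*\alpha=0$.

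Next I would identify $\bar\pi^*\alpha$ as a class pulled back from the Lang quotient. The projection $p\,:\,\mathsf G\times Y\ra\mathsf G$ is $G$-equivariant, so the $G$-torsor over $(G\ba\mathsf G)\times Y$ is the pullback, along $\bar p\,:\,(G\ba\mathsf G)\times Y\ra G\ba\mathsf G$, of the $G$-torsor over $G\ba\mathsf G$. Functoriality of the canonical map applied to $\pi$ and to $p$ then yields
$$
\bar\pi^*\alpha=c_{\mathsf G\times Y}(\tilde\alpha)=\bar p^*\beta,\qquad \beta:=c_{\mathsf G}(\tilde\alpha)\in H^i(\G_{k(G\ba\mathsf G)},\Z/\ell).
$$

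The heart of the matter is the vanishing of $\beta$. By Lang's theorem (Theorem~\ref{thm:lang}) the quotient $G\ba\mathsf G$ is isomorphic to $\mathsf G$, hence rational, so its unramified cohomology vanishes for $i>0$ (as in the proof of Theorem~\ref{thm:reason}). On the other hand $\bar\pi^*\alpha=\bar p^*\beta$ is unramified, being the image under $\bar\pi^*$ of the unramified class $\alpha$ (Lemma~\ref{lemm:funct}). Computing the residue of $\bar p^*\beta$ along a vertical divisor $D\times Y$, with $D\subset G\ba\mathsf G$ a prime divisor, returns the pullback of $\partial_D\beta$; the vanishing of all these residues forces $\beta$ to be unramified on $G\ba\mathsf G$. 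Rationality then gives $\beta=0$, whence $\bar\pi^*\alpha=\bar p^*\beta=0$ and finally $\alpha=0$.

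The main obstacle is exactly this last step, and it is instructive to see why unramifiedness is indispensable: a stable class of $G$ restricted to $k(G\ba\mathsf G)$ is in general nonzero and ramified — already for $\mathsf G=\mathbb G_m$, where $G=G^F$ is cyclic and $\beta$ is the Kummer class of the coordinate — so rationality of $G\ba\mathsf G$ alone does not kill it. It is the interplay of the unramifiedness of $\alpha$ with the rationality furnished by Lang that forces $\beta=0$. A secondary, purely technical point is the degree and separability bookkeeping needed to apply the transfer of Lemma~\ref{lemm:funct}: one must know $\bar\pi$ is separable of degree prime to $\ell$, which holds for the morphisms produced in Section~\ref{sect:class}.
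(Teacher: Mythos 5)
Your proof is correct, and its skeleton is the same as the paper's: Lang's theorem (Theorem~\ref{thm:lang}) makes $G\ba \mathsf G\simeq \mathsf G$ rational, and the transfer for the finite morphism $\bar\pi$ of degree prime to $\ell$ (Lemma~\ref{lemm:funct}) pushes the vanishing down to $G\ba X\sim G\ba V$. The difference is in how the factor $Y$ is handled. The paper's proof is a one-liner that in effect invokes Lemma~\ref{lemm:vanish} with source $(G\ba\mathsf G)\times Y$; read literally, that requires $H^i_{k,un}((G\ba\mathsf G)\times Y,\Z/\ell)=0$, i.e.\ $H^i_{k,un}(Y,\Z/\ell)=0$, which is not among the stated hypotheses (it does hold in the intended application, Proposition~\ref{prop:so}, where $Y=\mathbb G_m^n$ is rational). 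You avoid any assumption on $Y$ by observing that the $G$-torsor over $(G\ba\mathsf G)\times Y$ is pulled back along the projection $\bar p$ to $G\ba\mathsf G$, so that $\bar\pi^*\alpha=\bar p^*\beta$ with $\beta$ living on the rational variety $G\ba\mathsf G$; the residue computation along vertical divisors then shows $\beta$ is unramified, hence zero. That extra step is exactly what makes the corollary true as stated, so your version is the more robust one. Two small points you should make explicit: the injectivity of $H^{i-1}(\G_{\KK_\nu},\Z/\ell)\ra H^{i-1}(\G_{\KK_\nu(Y)},\Z/\ell)$ used to descend the residues (standard, since $Y$ is geometrically irreducible with a smooth $k$-point), and the fact that in characteristic $p$ a possible inseparability of $\bar\pi$ only costs a power of $p$ in the degree, which is harmless since $\ell\neq p$.
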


\begin{proof}
Any affine connected algebraic group over $k=\bar{\F}_p$ is rational. 
By Lang's theorem, the quotient $G\ba \mathsf G$ 
is isomorphic to $\mathsf G$, and thus
rational. For primes $\ell\notin S\cup\{p\}$ 
not dividing the degree of $\pi$, the induced 
map on cohomology is injective. This concludes the proof. 
 \end{proof}

\begin{thm}
\label{thm:mainn}
Let $\mathsf G$ be a Lie group over $k$. 
Let $G=G^F\subset \mathsf G(k)$ be a finite subgroup.
Put
$$
s(G):=\left\{
\begin{array}{ll}
\{p,2\}   & \text{ for } $G$ \text{ of type } C \text{ or } D_n, n\ge 5;\\
\{p,2,3\} & \text{ for } $G$ \text{ of type } D_4, F_4, E_6, E_7;\\
\{p\}     & \text{ otherwise. } 
\end{array}
\right.
$$ 
Then 
$$
H^i_{k,un}(G,\Z/\ell)=0
$$
for all $\ell\notin s(G)$. 
\end{thm}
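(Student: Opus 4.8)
The plan is to reduce the statement, type by type, to Corollary~\ref{coro:lang}. For each isogeny type of $\mathsf G$ I would exhibit a $G$-linear variety $X$, a variety $Y$ carrying the trivial $G$-action, and a $G$-equivariant finite morphism $\pi\,:\,\mathsf G\times Y\ra X$ whose degree is divisible only by the primes in $s(G)\setminus\{p\}$. Granting such a datum, Corollary~\ref{coro:lang} gives $H^i_{k,un}(G,\Z/\ell)=0$ for all $\ell\notin s(G)$: indeed, by Lang's theorem (Theorem~\ref{thm:lang}) the quotient $G\ba(\mathsf G\times Y)\cong(G\ba \mathsf G)\times Y\cong \mathsf G\times Y$ is rational, so its unramified cohomology vanishes, and the transfer argument of Lemma~\ref{lemm:vanish} propagates this vanishing down the degree-$\deg(\pi)$ cover $G\ba(\mathsf G\times Y)\ra G\ba X$ for every $\ell$ coprime to $\deg(\pi)$ and to $p$.

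For the groups covered by Proposition~\ref{prop:g-afff}, namely the simply-connected groups $\mathsf{SL}_n$ and $\mathsf{Sp}_{2n}$, the variety $\mathsf G$ is already $G$-affine, so one takes $X=\mathsf G$, $Y$ a point and $\pi$ the identity; here $\deg(\pi)=1$ and the vanishing holds for all $\ell\ne p$, giving $s(G)=\{p\}$. For the orthogonal groups $\mathsf{SO}_n$, Proposition~\ref{prop:so} produces precisely such a triple $(X,Y,\pi)$ with $Y=\mathbb{G}_m^{n}$ and $\deg(\pi)\mid 2^{n-1}$; thus the only bad prime besides $p$ is $2$, which accounts for the value $\{p,2\}$ for the corresponding types $D_n$, $n\ge 5$. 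The case $D_4$ is analogous, except that the triality automorphism entering the twisted forms ${}^3D_4$ forces in addition a degree-$3$ cover, producing the extra prime $3$ and hence $s(G)=\{p,2,3\}$.

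The remaining, and genuinely harder, cases are the exceptional groups. Here there is no single explicit system of projections as in Propositions~\ref{prop:g-afff} and~\ref{prop:so}, and I would instead build the $G$-equivariant covers from the standard small models---$G_2$ as the automorphism group of the octonions acting on its $7$-dimensional representation, $F_4$ and $E_6$ through the exceptional Jordan algebra and its $26$- and $27$-dimensional representations, $E_7$ through its $56$-dimensional minuscule representation---and then invoke the rationality results of Tits~\cite{tits} to bound the degrees of the resulting covers of $G$-linear varieties. The expectation is that these degrees involve only the torsion primes of the root system: $\{2,3\}$ for $F_4,E_6,E_7$, giving $s(G)=\{p,2,3\}$, while for $G_2$ (and the remaining exceptional cases) one can arrange the model so that $\deg(\pi)$ is a power of $p$ alone, giving $s(G)=\{p\}$.

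The main obstacle is this last step. For the classical groups the whole argument is driven by the very concrete affine rulings of Section~\ref{sect:class}, and everything reduces to the visible stabilizer orders ($2^{n-1}$, and so on). For the exceptional groups one must construct the covers by hand out of the exceptional algebras and control their degrees using Tits's theorems, verifying in particular that the triality contribution for $D_4$ and the exceptional contributions for $F_4,E_6,E_7$ introduce no primes beyond $2$ and $3$. Isolating these degrees---and confirming that $G_2$ and the leftover types cost nothing beyond $p$---is where the real difficulty of the theorem lies.
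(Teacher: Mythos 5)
Your treatment of the classical types is exactly the argument the paper has in mind: apply Corollary~\ref{coro:lang} to the affine rulings of Section~\ref{sect:class}, with Proposition~\ref{prop:g-afff} giving degree $1$ for $\mathsf{SL}_n$ and $\mathsf{Sp}_{2n}$ and Proposition~\ref{prop:so} giving degree dividing $2^{n-1}$ for $\mathsf{SO}_n$, and then push the vanishing down via Lemma~\ref{lemm:vanish}. That part is correct. Note, though, that your conclusion $s(G)=\{p\}$ for type $C$ is stronger than the stated $\{p,2\}$; the discrepancy is a signal that the theorem is meant to cover all isogeny forms $G=G^F$, not just the simply-connected ones, and your argument only reaches the simply-connected cases (you also never address type $B$ in odd characteristic, which falls under Proposition~\ref{prop:so} and would give $\{p,2\}$ rather than the stated $\{p\}$).

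The genuine gap is the exceptional types (and the triality form of $D_4$). You propose to build $G$-equivariant finite covers $\mathsf G\times Y\ra X$ by hand out of the octonions, the exceptional Jordan algebra and the $56$-dimensional representation of $E_7$, and to bound their degrees by Tits's theorems; none of this is carried out, and the claim that the resulting degrees involve only the torsion primes is an expectation, not a proof. It is also not the route the paper takes: there is no analogue of the explicit rulings of Section~\ref{sect:class} for $F_4$, $E_6$, $E_7$. The available mechanism in the paper for these cases is cohomological rather than geometric: by Lang's theorem (Theorem~\ref{thm:lang}) one has $G\ba\mathsf G\simeq\mathsf G$, and Theorem~\ref{thm:mainn-2} --- restriction to the big Bruhat cell $\mathsf U^+\mathsf T\mathsf U^-$ together with the transgression for the fibration $\mathsf G\ra\mathsf G/\mathsf T$ --- shows that the image of $H^i(G,\Z/\ell)$ in $H^i_{k,s}(\mathsf G,\Z/\ell)$ factors through $H^i_{et}({\rm Pic}(\mathsf G),\Z/\ell)$ with ${\rm Pic}(\mathsf G)\simeq\pi_1(\mathsf G)$; Corollary~\ref{coro:aff-act} then gives the vanishing in the simply-connected affine case, and the exceptional primes in $s(G)$ enter through $|\pi_1(\mathsf G)|$ and the splitting-field degrees of \cite{tits}, not through degrees of hand-built covers. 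As written, your proposal proves the theorem only for the simply-connected classical groups of types $A$, $C$, $D$; everything else is asserted.
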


We have a natural homomorphism
$$
H^i(G,\Z/\ell)\ra H^i(G\backslash \mathsf G,\Z/{\ell}).
$$
By Lang's theorem \ref{thm:lang}, 
$G\backslash \mathsf G\simeq \mathsf G$, as algebraic varieties.
Thus we get a homomorphism 
$$
\rho\,:\, H^i(G,\Z/\ell)\ra  H^i_{et}(\mathsf G, \Z/\ell).
$$
Assume that $\mathsf G$ is semi-simple. 
Then ${\rm Pic}(\mathsf G)\simeq \pi_1(\mathsf G)$ is a finite group and 
$\mathsf G=\tilde{\mathsf G}/{\rm Pic}(G)$, 
where $\tilde{\mathsf G}$ is the universal cover of $\mathsf G$.
We obtain a natural homomorphism
$$
\eta\,:\, H^i_{et}({\rm Pic}(\mathsf G),\Z/\ell)
\ra H^i_{et}(\mathsf G,\Z/\ell).
$$

\begin{thm}
\label{thm:mainn-2}
Let $\mathsf G$ be a semi-simple Lie group over $k$. 
Let $G=G^F\subset \mathsf G(k)$ be a finite subgroup.
Consider the diagram

\

\centerline{
\xymatrix{        
&              H^i_{et}({\rm Pic}(\mathsf G), \Z/\ell) 
  \\H^i(G,\Z/\ell)\ar[r]^{{\sigma^i_*\circ\,\rho\,\,\,\,\,}} & \ar[u]^{\sigma^i_*\circ\,\eta\,\,\,} H^i_{k,s}(\mathsf G, \Z/\ell).
}
}

\

Then the image of $\rho$ is contained in the image of $\eta$. 
\end{thm}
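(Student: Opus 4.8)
The plan is to pull the entire diagram back along the universal covering $\pi\colon\tilde{\mathsf G}\to\mathsf G$ (kernel $\mu=\mathrm{Pic}(\mathsf G)$), reduce the assertion to the simply-connected group $\tilde{\mathsf G}$, and then compute the stable cohomology of both $\tilde{\mathsf G}$ and $\mathsf G$ by restricting to the big Bruhat cell. First I would lift $F$ to an automorphism $\tilde F$ of $\tilde{\mathsf G}$ compatible with $\pi$, set $\tilde G=\tilde{\mathsf G}^{\tilde F}$, and form the Lang map $\tilde\tau\colon\tilde{\mathsf G}\to\tilde{\mathsf G}$. Since $\pi\circ\tilde\tau=\tau\circ\pi$, the $\tilde G$-torsor $\tilde\tau$ maps to the pullback $\pi^*\tau$ of the $G$-torsor $\tau$, and identifying $\pi^*\tau$ with the pushout of $\tilde\tau$ along $j\colon\tilde G\to G$ (the restriction of $\pi$) gives the factorization $\pi^*\circ\rho=\tilde\rho\circ j^*$, where $\tilde\rho$ is the simply-connected analogue of $\rho$. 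Thus it suffices to prove $\sigma^*\circ\tilde\rho=0$ and to identify $\ker(\pi^*)$ on stable cohomology with $\Im(\sigma^*\circ\eta)$.

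The key input is the vanishing $H^{>0}_{k,s}(\tilde{\mathsf G},\Z/\ell)=0$ for every $\ell\neq p$, which makes $\sigma^*\circ\tilde\rho=0$ automatic. To prove it I would use the big Bruhat cell $\Omega\cong\A^{2N}\times T$, open and dense in $\tilde{\mathsf G}$, so that restriction to the generic point factors through $H^*_{et}(\Omega)=H^*_{et}(T)=\Lambda_{\Z/\ell}(e_1,\dots,e_r)$. In the Serre spectral sequence of the Borel fibration $B\to\tilde{\mathsf G}\to\mathcal F$, the transgression $d_2\colon H^1(T)\to H^2(\mathcal F)$ is the characteristic map $X^*(T)\otimes\Z/\ell\to\mathrm{Pic}(\mathcal F)\otimes\Z/\ell$; for simply-connected $\tilde{\mathsf G}$ one has $X^*(T)=\mathrm{Pic}(\mathcal F)$, so $d_2$ is an isomorphism and every positive-degree fiber class transgresses. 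Hence $H^{>0}_{et}(\tilde{\mathsf G})$ restricts to zero on $T$, so every positive class dies on $\Omega$ and therefore at the generic point, for all $\ell\neq p$ simultaneously.

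Finally I would run the same big-cell analysis for $\mathsf G$. Now $X^*(T_{\mathsf G})$ is the intermediate lattice with $P/X^*(T_{\mathsf G})\cong\mu$, so the exact sequence $0\to X^*(T_{\mathsf G})\to P\to\mu\to 0$ tensored with $\Z/\ell$ shows $\ker(d_2)\cong\mu[\ell]$; under restriction to $T_{\mathsf G}$ these surviving degree-one fiber classes are exactly $\Im(\sigma^*\circ\eta)$ in degree one, while the base classes pulled back from $H^{>0}(\mathcal F)$ die on $\Omega$ because $\Omega$ maps onto the big affine Schubert cell of $\mathcal F$. Consequently $H^{>0}_{k,s}(\mathsf G,\Z/\ell)$ is the subalgebra generated by these surviving classes, which equals $\Im(\sigma^*\circ\eta)$; in particular $\Im(\sigma^*\circ\rho)\subseteq H^{>0}_{k,s}(\mathsf G)=\Im(\sigma^*\circ\eta)$, proving the claim.

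The main obstacle will be the two lattice-level points at primes $\ell\mid|\mu|$. First, one must make the torsor identification $\pi^*\tau\cong j_*\tilde\tau$ rigorous even though $j\colon\tilde G\to G$ is neither injective nor surjective, its kernel and cokernel being $\mu^F$ and $\mu_F$ by the Lang sequence. Second, one must control the higher edge maps of the spectral sequence for $\mathsf G$ so that no Bockstein-type class of $H^*(\mu,\Z/\ell)$ spuriously enters or leaves $\Im(\sigma^*\circ\eta)$; the point to verify here is that every such class restricts to zero on $T_{\mathsf G}$ — the degree-one generators lift to $\Z$-coefficients, so their Bocksteins vanish in $\Lambda_{\Z/\ell}(e_1,\dots,e_r)$ — and hence dies stably, leaving only the exterior part of $H^*(\mu)$ surviving.
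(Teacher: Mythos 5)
Your argument is correct and follows the paper's own route: the paper's proof is exactly the ``standard computation'' you carry out --- restriction of $\mathsf G\to\mathsf G/\mathsf T$ to the big Bruhat cell $\mathsf U^+\mathsf T\mathsf U^-\cong\mathsf T\times\A^N$, the identification $H^*_{k,s}(\mathsf T,\Z/\ell)=H^*(\mathsf T,\Z/\ell)=\Lambda^*(X^*(\mathsf T)\otimes\Z/\ell)$, and the transgression $X^*(\mathsf T)\otimes\Z/\ell\to{\rm Pic}(\mathsf G/\mathsf B)\otimes\Z/\ell$, whose kernel is the $\ell$-torsion of ${\rm Pic}(\mathsf G)$ and is matched with the image of $\eta$. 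Your first paragraph (the reduction along the universal cover, with its attendant difficulties about $j\colon\tilde G\to G$) is logically redundant, since your third paragraph already establishes $H^{>0}_{k,s}(\mathsf G,\Z/\ell)=\Im(\sigma^*\circ\eta)$ directly, which immediately gives the stated containment.
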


\begin{proof}
Standard computation using restriction of the 
fibration $\mathsf G\ra \mathsf G/\mathsf T$ to 
$\mathsf G^{\circ}=\mathsf T\times \A^N$
and the transgression homomorphism.
Geometrically, the map $\sigma^* \circ\nu$ in the diagram corresponds
to the embedding of the maximal Bruhat cell $\mathsf U^+\mathsf  T \mathsf U^-$ into the
$G\backslash \mathsf G= \mathsf G$, where $\mathsf T$ is the maximal torus. 
We have 
$$
H^*_{k,s}(\mathsf T, \Z/\ell)= H^*(\mathsf T,\Z/\ell),
$$ 
for $\ell\neq p$, and these coincide
with $H^*({\rm Pic}(\mathsf T)\otimes \Z/\ell, \Z/\ell)$.
 \end{proof}

\begin{coro}
\label{coro:aff-act}
Assume that $\mathsf G$ is simply-connected and that 
the natural translation action of $\mathsf G$ on itself is
affine. Then 
$$  
H^i_{k,s}(\mathsf G,\Z/\ell) =0,
$$
for all $i>0$ and $\ell\nmid q$.
\end{coro}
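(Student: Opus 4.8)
The plan is to compute $H^i_{k,s}(\mathsf G,\Z/\ell)$ by restricting to a maximal torus and to see that this restriction dies in positive degrees precisely because $\mathsf G$ is simply-connected. By Lang's Theorem~\ref{thm:lang} we have $G\ba\mathsf G\simeq\mathsf G$, so $H^i_{k,s}(\mathsf G,\Z/\ell)$ is by definition the image of $\sigma^i_*\colon H^i_{et}(\mathsf G,\Z/\ell)\to H^i(\G_{k(\mathsf G)},\Z/\ell)$. First I would use the big Bruhat cell $\mathsf U^+\mathsf T\mathsf U^-\subset\mathsf G$, which is dense open and isomorphic to $\mathsf T\times\A^{2N}$ with $N=|\Phi^+|$; since affine space has trivial \'etale cohomology for $\ell\neq p$, restriction to this cell identifies $\sigma^i_*$ with the composite $H^i_{et}(\mathsf G,\Z/\ell)\to H^i_{et}(\mathsf T,\Z/\ell)\to H^i(\G_{k(\mathsf T)},\Z/\ell)$. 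Thus every stable class of $\mathsf G$ is detected by its restriction to $\mathsf T$, which is exactly the geometric content recorded in the proof of Theorem~\ref{thm:mainn-2}.

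Next I would exploit the Hopf algebra structure of \'etale cohomology. For the groups at hand the ring $H^*_{et}(\mathsf G,\Z/\ell)$ is an exterior algebra on primitive generators sitting in odd degrees $\ge 3$, while $H^*_{et}(\mathsf T,\Z/\ell)=\Lambda^*\bigl(X^*(\mathsf T)\otimes\Z/\ell\bigr)$ has primitive elements only in degree one. The restriction map is a morphism of Hopf algebras, hence sends primitives to primitives; the generators of $H^*_{et}(\mathsf G,\Z/\ell)$ therefore restrict to zero, and since they generate the ring the whole restriction vanishes in degrees $\ge 2$. In degree one there is nothing either: $\mathrm{Pic}(\mathsf G)=\pi_1(\mathsf G)=0$ because $\mathsf G$ is simply-connected, so $H^1_{et}(\mathsf G,\Z/\ell)=\Hom(\pi_1(\mathsf G),\Z/\ell)=0$, which is precisely the statement that the source $H^i_{et}(\mathrm{Pic}(\mathsf G),\Z/\ell)$ of $\eta$ in Theorem~\ref{thm:mainn-2} vanishes for $i>0$. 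Combining this with the previous paragraph gives $H^i_{k,s}(\mathsf G,\Z/\ell)=0$ for all $i>0$.

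The remaining point is to secure the exterior-algebra description for every $\ell\nmid q$, and here the hypothesis on the translation action enters. By Proposition~\ref{prop:g-afff} the self-action of $\mathsf G$ is affine for the simply-connected classical groups $\mathsf{SL}_n$ and $\mathsf{Sp}_{2n}$, that is, in types $A$ and $C$; these groups have torsion-free cohomology and no torsion primes, so the structure used in the second paragraph holds for all $\ell\neq p$. As $q$ is a power of $p$, the condition $\ell\nmid q$ is the same as $\ell\neq p$, and the vanishing thereby extends from the restricted range $\ell\notin s(G)$ of Theorem~\ref{thm:mainn} to the full range claimed.

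The hard part is the first step: one must check that restriction to the maximal torus really computes the stable cohomology of $\mathsf G$, i.e. that the big-cell restriction $H^i_{et}(\mathsf G,\Z/\ell)\to H^i_{et}(\mathsf T,\Z/\ell)$ has the same kernel as $\sigma^i_*$ and that passing to the generic point neither loses nor creates stable classes. This is the transgression analysis for the fibration $\mathsf G\to\mathsf G/\mathsf T$ together with the vanishing of $\mathrm{Pic}(\mathsf G)$ that underlies Theorem~\ref{thm:mainn-2}; once it is granted, the primitivity computation is routine and the affineness hypothesis serves only to eliminate the bad primes.
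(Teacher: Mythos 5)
Your proposal is correct in substance and its first half coincides with the paper's: both arguments reduce the stable cohomology of $\mathsf G$ to the restriction $H^*_{et}(\mathsf G,\Z/\ell)\to H^*_{et}(\mathsf T,\Z/\ell)$ by passing through the big Bruhat cell $\mathsf U^+\mathsf T\mathsf U^-\simeq \mathsf T\times \A^{2N}$, which is exactly the geometric content the paper extracts from Theorem~\ref{thm:mainn-2}. (Two small points there: the target of $\sigma^i_*$ is $H^i(\G_{k(\mathsf G)},\Z/\ell)$, not $H^i(\G_{k(\mathsf T)},\Z/\ell)$ --- what you need is only that $\sigma^i_*$ \emph{factors through} $H^i_{et}$ of the dense open cell, hence through $H^i_{et}(\mathsf T,\Z/\ell)$; and since a class killed on a dense open dies at the generic point, one inclusion of kernels suffices, so the worry in your last paragraph about the restriction having ``the same kernel'' as $\sigma^i_*$ is not needed for a vanishing statement.) Where you genuinely diverge is in killing the restriction to $\mathsf T$. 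The paper does this by the transgression in the spectral sequence of the $\mathsf T$-torsor $\mathsf G\to\mathsf G/\mathsf T$: for simply-connected $\mathsf G$ the differential $H^1(\mathsf T,\Z/\ell)\to H^2(\mathsf G/\mathsf T,\Z/\ell)\simeq {\rm Pic}(\mathsf G/\mathsf T)\otimes\Z/\ell\simeq X^*(\mathsf T)\otimes\Z/\ell$ is an isomorphism, and since $H^*(\mathsf T,\Z/\ell)$ is generated in degree one, nothing on the fiber column survives; this uses no information about the ring $H^*_{et}(\mathsf G,\Z/\ell)$ and works for every $\ell\neq p$ and every simply-connected $\mathsf G$. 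Your Hopf-algebra primitivity argument is a legitimate alternative, but it needs $H^*_{et}(\mathsf G,\Z/\ell)$ to be an exterior algebra on odd-degree primitives, i.e.\ it needs $\ell$ not to be a torsion prime. Your way of securing this is the one real weak point: Proposition~\ref{prop:g-afff} shows that $\mathsf{SL}_n$ and $\mathsf{Sp}_{2n}$ have affine self-action, not that they are the \emph{only} simply-connected groups with this property, so the hypothesis of the corollary does not formally put $\mathsf G$ in type $A$ or $C$. In practice this covers every case to which the paper applies the corollary, but to prove the statement as literally written you should either add the no-torsion hypothesis explicitly or replace the primitivity step by the transgression computation.
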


\begin{proof}
For simply-connected $\mathsf G$ the image 
of $H^*(\mathsf G,\Z/\ell)$ in $H^*_{k,s}(\mathsf T, \Z/\ell)$ is trivial.
This follows from the computation of the transgression homomorphism
for $\mathsf G/\mathsf T$, since $\mathsf G$ is a $\mathsf T$-torsor
over $\mathsf G/\mathsf T$, with ${\rm Pic}(\mathsf G/\mathsf T)$ isomorphic to
the character group of the maximal torus $\mathsf T$.
Geometrically, this corresponds to the fact that 
$\mathsf T$ is contractible in $\mathsf G$, provided $\mathsf G$ is simply-connected.
\end{proof}

\begin{coro}
Let $\mathsf G$ be a semi-simple Lie group over $k$ and
$G\subset \mathsf G(k)$. Then the image of $H^i_{k,un}(G)$ in
$H^i_{k,s}(G\backslash \mathsf G(k)/G, \Z_\ell)$ is trivial.
\end{coro}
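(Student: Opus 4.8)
The plan is to use that $\mathsf G$ is a torsor over the double coset, reduce via Lang's theorem to the stable cohomology of $\mathsf G$ itself, and there invoke the control provided by Theorem~\ref{thm:mainn-2}.

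First I would set up the map. The two-sided translation action of $G\times G$ on $\mathsf G$ is almost free, so $G\ba\mathsf G/G=(G\times G)\ba\mathsf G$ is a faithful model for $G\times G$ and, by Proposition~\ref{prop:div}(3), the classifying map $H^i(G\times G,\Z/\ell)\to H^i(\G_{k(G\ba\mathsf G/G)},\Z/\ell)$ factors through $H^i_{k,s}$; the map in the statement arises by precomposing with the natural inflation $H^i_{k,s}(G)\to H^i_{k,s}(G\times G)$ along a projection. Applying Lang's Theorem~\ref{thm:lang} to the left action identifies $G\ba\mathsf G\simeq\mathsf G$ through the isogeny $\tau\colon x\mapsto F(x)^{-1}x$; the computation $\tau(xg)=F(g)^{-1}\tau(x)g$ shows the residual right $G$-action to be the $F$-twisted conjugation $y\mapsto F(g)^{-1}yg$, so that $G\ba\mathsf G/G$ is the twisted-conjugation quotient of $\mathsf G$. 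The left-factor class is thereby realized on $\mathsf G\simeq G\ba\mathsf G$, reducing the computation to the image of $H^i(G,\Z/\ell)$ in $H^i_{k,s}(\mathsf G,\Z/\ell)$.

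By Theorem~\ref{thm:mainn-2} this image lies in the image of $\eta$, i.e.\ it is pulled back from $H^i_{et}({\rm Pic}(\mathsf G),\Z/\ell)$ along the central isogeny $\tilde{\mathsf G}\to\mathsf G$. When $\mathsf G$ is simply-connected one has ${\rm Pic}(\mathsf G)=0$ and Corollary~\ref{coro:aff-act} yields the vanishing at once; the general semisimple case I would reduce to this by lifting $G$ to $\tilde G\subset\tilde{\mathsf G}(k)$ and comparing double cosets along $\tilde{\mathsf G}\to\mathsf G$. For $\mathsf G$ of classical type $A$ or $C$ there is a purely birational shortcut: Proposition~\ref{prop:g-afff} verifies assumption~(1) of Lemma~\ref{lemm:twist}, so the twisted-conjugation quotient is stably birational to $\lambda(G)\ba\mathsf G\simeq\mathsf G$, hence stably rational, and its stable cohomology vanishes in all positive degrees.

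The hard part will be disposing of the ${\rm Pic}(\mathsf G)$-contribution in the general semisimple case. The two-sided quotient realizes a single unramified class $\alpha$ simultaneously through the left and the right copy of $G$, and on $G\ba\mathsf G/G$ these two realizations must coincide; since the torsors $\mathsf G\to G\ba\mathsf G$ and $\mathsf G\to\mathsf G/G$ carry opposite classes in ${\rm Pic}(\mathsf G)\simeq\pi_1(\mathsf G)$, the left and right transgressions enter with opposite signs and their common image is annihilated. What remains is an $\eta$-class whose residues along the boundary divisors are governed by the residues of $\alpha$ on $G\ba V$, so the unramifiedness of $\alpha$ forces it to vanish. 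Pinning down the sign conventions for the two transgression homomorphisms and carrying out this residue comparison is where I expect the genuine difficulty to lie.
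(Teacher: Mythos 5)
Your proposal closes the simply-connected case (there Theorem~\ref{thm:mainn-2} alone suffices, since ${\rm Pic}(\mathsf G)=0$ kills the image of $\rho$ in positive degrees -- note that Corollary~\ref{coro:aff-act} as stated carries the extra hypothesis that the translation action is affine, which the paper only verifies for types $A$ and $C$ via Proposition~\ref{prop:g-afff}), but the general semisimple case is exactly where your argument stops being a proof. The claimed cancellation -- that ``the torsors $\mathsf G\to G\ba\mathsf G$ and $\mathsf G\to\mathsf G/G$ carry opposite classes in ${\rm Pic}(\mathsf G)$'' so that ``the left and right transgressions enter with opposite signs'' -- is not grounded in anything established in the paper: these are $G$-covers, not ${\rm Pic}(\mathsf G)$-torsors, and the transgression of Theorem~\ref{thm:mainn-2} lives in the fibration $\mathsf G\to\mathsf G/\mathsf T$, not in the left/right quotient maps. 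The final sentence, that ``the unramifiedness of $\alpha$ forces it to vanish'' after a residue comparison you have not carried out, is the entire content of the statement in the non-simply-connected case; as written this is a gap, and you acknowledge it yourself.

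The paper's route is different and avoids the ${\rm Pic}$ bookkeeping altogether. It writes $G\subset G^F$ with $G=[\mathsf G(k)^F,\mathsf G(k)^F]$ and $G^F/G$ abelian of order prime to $p$; Lang's Theorem~\ref{thm:lang} gives $G^F\ba\mathsf G\simeq\mathsf G$, which contains the big Bruhat cell $\mathsf U^-\mathsf T\mathsf U^+$, and $G\ba\mathsf G$ then contains the corresponding finite abelian unramified cover $\mathsf U^-\tilde{\mathsf T}\mathsf U^+$ with $\tilde{\mathsf T}$ again a torus. Restriction to this dense open cell embeds $H^i_{k,s}(G\ba\mathsf G,\Z/\ell)$ into the cohomology of $\tilde{\mathsf T}$, and there every positive-degree class is built from degree-one classes with nontrivial residues at infinity, so an unramified class must die. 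This is how the unramifiedness hypothesis actually gets used -- pointwise on a concrete open torus, rather than through a global cancellation of transgressions. One further point applies to both your reduction and would need care in a full write-up: the target of the statement is $H^i_{k,s}(G\ba\mathsf G/G,\Z/\ell)$, and passing to $H^i_{k,s}(G\ba\mathsf G,\Z/\ell)$ uses the pullback along $G\ba\mathsf G\to G\ba\mathsf G/G$, whose injectivity on the relevant classes should be justified (e.g.\ by a transfer argument, since the degree is $|G|$ and one works $\ell$-locally after Proposition~\ref{prop:div}(4)).
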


\begin{proof}
The group $G$ is contained in a subgroup $G^F$ of $\mathsf G(k)$,
for some $F$, and $G=[\mathsf G(k)^F,\mathsf G(k)^F]$ in this case, with
$G^F/G$ a group of order coprime to $p$.
As before $G^F\backslash \mathsf G= \mathsf G$ as an algebraic variety, which 
contains the cell $\mathsf U^- \mathsf T \mathsf U^+$. Thus $G\backslash \mathsf G$ 
contains a finite abelian unramified covering of $\mathsf U^-\mathsf  T \mathsf U^+$ which is also a product
 $\mathsf U^- \tilde{\mathsf T}\mathsf  U^+$, where $\tilde{\mathsf T}$ is a torus.
The cohomology $H^i_{k,s}(G\backslash \mathsf G, \Z/\ell)$
embedds into the cohomology of $\tilde{\mathsf T}$. Hence $H^i_{k,un}(G,\Z/\ell)$
map to zero.
\end{proof}

\section{Reduction to Sylow subgroups}
\label{sect:splitting}

Let $G$ be a finite group. For $H\subset G$ let $\mathfrak N_G(H)$ denote the normalizer 
of $H$. Let $\Syl_{\ell}(G)$ be an
$\ell$-Sylow subgroup of $G$. 
Recall the following classical result (see, e.g., \cite[Section III.5]{adem-milgram}):
$$
H^i(G,\Z/\ell)=H^i(\Syl_{\ell}(G), \Z/\ell)^{\mathfrak N_G(\Syl_{\ell}(G))}.
$$

\begin{thm}
Let $G$ be a finite group. Let $\ell$ be a prime distinct from the characteristic of $k$. 
Then there is an isomorphism 
$$
H^i_{k,s}(G,\Z/\ell)\stackrel{\sim}{\longrightarrow} 
H^i_{k,s}(\Syl_{\ell}(G),\Z/\ell)^{\mathfrak N_G(\Syl_{\ell}(G))}.
$$
Similarly, 
$$
H^i_{k,un}(G,\Z/\ell)\stackrel{\sim}{\longrightarrow} 
H^i_{k,un}(\Syl_{\ell}(G),\Z/\ell)^{\mathfrak N_G(\Syl_{\ell}(G))}.
$$
\end{thm}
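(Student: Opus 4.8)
The plan is to deduce the theorem from the already-established stable statement together with Lemma~\ref{lemm:funct}. The assertion for $H^i_{k,s}$ is exactly Proposition~\ref{prop:div}(4) applied to $M=\Z/\ell$, so I would simply invoke it and concentrate all the work on the unramified case. There the goal is to show that the stable-cohomology isomorphism already produced restricts to an isomorphism on the unramified subgroups.

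\textbf{Setup.} Fix a faithful $G$-representation $V$, put $K=k(V)^G$, and write $\Syl_\ell=\Syl_\ell(G)$, $\mathfrak N=\mathfrak N_G(\Syl_\ell)$. The tower $V\to \Syl_\ell\ba V\to G\ba V$ gives a finite separable morphism
$$
\pi\,:\, \Syl_\ell\ba V\ra G\ba V
$$
of degree $[G:\Syl_\ell]$, which is prime to $\ell$ because $\Syl_\ell$ is an $\ell$-Sylow subgroup. The restriction map $\mathrm{res}\,:\,H^i_{k,s}(G,\Z/\ell)\to H^i_{k,s}(\Syl_\ell,\Z/\ell)$ is induced by the pullback $\pi^*$, and the corestriction (transfer) $\mathrm{cor}$ is induced by the finite pushforward $\pi_*$ of Lemma~\ref{lemm:funct}. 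By that lemma both operations send unramified classes to unramified classes, and the composite $\pi_*\circ\pi^*=\mathrm{cor}\circ\mathrm{res}$ is multiplication by $[G:\Syl_\ell]$.

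\textbf{Conclusion.} Since $\mathrm{res}$ preserves the unramified condition and, by Proposition~\ref{prop:div}(4), lands in the $\mathfrak N$-invariants, it carries $H^i_{k,un}(G,\Z/\ell)$ into
$$
H^i_{k,un}(\Syl_\ell,\Z/\ell)\cap H^i_{k,s}(\Syl_\ell,\Z/\ell)^{\mathfrak N}=\big(H^i_{k,un}(\Syl_\ell,\Z/\ell)\big)^{\mathfrak N},
$$
the last equality holding because $U^{\mathfrak N}=U\cap W^{\mathfrak N}$ for any submodule $U\subset W$. Injectivity is inherited from the stable isomorphism. For surjectivity I would take $\beta$ in $\big(H^i_{k,un}(\Syl_\ell,\Z/\ell)\big)^{\mathfrak N}$, let $\alpha\in H^i_{k,s}(G,\Z/\ell)$ be its unique preimage under the stable isomorphism, and note that $[G:\Syl_\ell]\,\alpha=\mathrm{cor}(\beta)$ is unramified; since $[G:\Syl_\ell]$ is invertible on the $\Z/\ell$-vector space $H^i_{k,s}(G,\Z/\ell)$ and multiplication by it preserves the subgroup $H^i_{k,un}(G,\Z/\ell)$, one gets $\alpha\in H^i_{k,un}(G,\Z/\ell)$. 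Thus $\mathrm{res}$ restricts to the claimed isomorphism.

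\textbf{Main obstacle.} The only step needing genuine care is the identification of the algebraic maps $\mathrm{res}$ and $\mathrm{cor}$ with the geometric operations $\pi^*$ and $\pi_*$ of Lemma~\ref{lemm:funct} along $\pi$; this is precisely what transports the unramified condition across the Sylow reduction. Once this compatibility is in place—functoriality of the stable-cohomology construction under $\pi$ together with the realization of transfer by the finite pushforward—the rest of the argument is formal, and the prime-to-$\ell$ degree together with Lemma~\ref{lemm:funct} handles all the scaling.
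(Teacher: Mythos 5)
Your proposal is correct and follows essentially the same route as the paper: the transfer argument along the finite separable covering $\pi\colon \Syl_{\ell}(G)\ba V\to G\ba V$ of degree prime to $\ell$, with $\pi_*\circ\pi^*$ being multiplication by that degree. The only cosmetic difference is that you package the ramification input into Lemma~\ref{lemm:funct} (that $\pi^*$ and $\pi_*$ preserve unramified classes), whereas the paper's terse proof points directly at the local ramification indices of $\pi$ being coprime to $\ell$ --- the same underlying fact.
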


\begin{proof}
Let $V$ be a faithful representation of $G$ over $k$. 
Then the map 
$$
\pi\,:\, \Syl_{\ell}(G)\ba V\ra G\ba V
$$ 
is a finite,
separable and surjective map of degree prime to $\ell$. 
Hence $\pi_*\circ \pi^*$ is invertible in cohomology.
This implies the first claim. 
 
The fact that the local ramification indices of $\pi$ are coprime 
to $\ell$ implies the second claim. 
 \end{proof}

\begin{lemm}
Let $G\subset \mathsf G(k)$ be a finite subgroup of a Lie group
$\mathsf G$ over $k$ which has a trivial intersection with the center of $\mathsf G(k)$.
Let $G^{int}\backslash \mathsf G$ be the quotient of $\mathsf G$ by the conjugation 
action of $G$. 
Then the homomorphisms
$$
h^{int}\,:\, H^i_{k,s}(G,\Z/\ell) \to  H^i_{k,s}(G^{int}\backslash \mathsf G ,\Z/\ell)
$$
and
$$
h^{int}_{un} :H^i_{k,un}(G,\Z/\ell)\to H^{i}_{k,un}(G^{int}\backslash  \mathsf G ,\Z/\ell)
$$ 
are injective.
\end{lemm}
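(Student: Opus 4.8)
The plan is to compare the conjugation quotient with the linear quotient $G\backslash V$ of a faithful representation $V$, routed through the auxiliary space $G\backslash(\mathsf{G}\times V)$ on which $G$ acts diagonally, by conjugation on $\mathsf{G}$ and linearly on $V$. Since $G\cap Z(\mathsf{G})=\{e\}$, the conjugation action is almost free (the fixed locus of any $\gamma\neq e$ is its centralizer $Z_{\mathsf{G}}(\gamma)\subsetneq\mathsf{G}$), so all three universal maps out of $H^i(G,\Z/\ell)$ are defined and factor through stable cohomology by Proposition~\ref{prop:div}. I would prove the statement for $h^{int}$ first; the unramified assertion then follows formally, because $H^i_{k,un}\subset H^i_{k,s}$ is a subgroup and pullback along the two surjective projections preserves unramified classes by Lemma~\ref{lemm:funct}, so $h^{int}_{un}$ is just the restriction of the injective map $h^{int}$.

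The first step is to record that the projection $p_1\colon G\backslash(\mathsf{G}\times V)\to G^{int}\backslash\mathsf{G}$ is, over the free locus, the vector bundle associated to the principal $G$-bundle $\mathsf{G}^{\circ}\to G\backslash\mathsf{G}^{\circ}$ and the linear $G$-module $V$. Hence $p_1^*$ is an isomorphism on both stable and unramified cohomology (homotopy invariance, \cite{ct} together with Proposition~\ref{prop:div}). By functoriality of the universal maps one gets $p_1^*\circ h^{int}=p_2^*$ under the identification $H^i_{k,s}(G,\Z/\ell)=H^i_{k,s}(G\backslash V,\Z/\ell)$, where $p_2\colon G\backslash(\mathsf{G}\times V)\to G\backslash V$ is the second projection. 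Therefore $h^{int}=(p_1^*)^{-1}\circ p_2^*$, and everything reduces to proving that $p_2^*$ is injective.

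For injectivity of $p_2^*$ I would produce a rational section of $p_2$. By Lemma~\ref{lemm:tran} such a section exists as soon as $V\stackrel{G}{\leadsto}\mathsf{G}$, where $\mathsf{G}$ carries the conjugation action; equivalently, the conjugation bundle over $G\backslash V$ is birationally trivial. To build the required $G$-morphism I would exploit the freedom in the choice of $V$ (Proposition~\ref{prop:div}) and take $V=\mathfrak{g}$ with the adjoint action, which is faithful precisely because $G\cap Z(\mathsf{G})=\{e\}$. A $G$-equivariant Cayley transform (or exponential, in suitable characteristic) $\mathfrak{g}\dashrightarrow\mathsf{G}$, equivariant via $\gamma\exp(X)\gamma^{-1}=\exp(\mathrm{Ad}(\gamma)X)$, is dominant onto a neighbourhood of the identity and meets the regular, hence free, locus, yielding $V\stackrel{G}{\leadsto}\mathsf{G}$ and thus the section.

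The main obstacle is exactly this last step. Over $k=\bar{\F}_p$ neither the exponential nor the Cayley transform is available in every characteristic, and the existence of a $G$-equivariant rational map $V\dashrightarrow\mathsf{G}$ onto regular elements is what forces the exclusion of small primes in the global theorems. When $\mathsf{G}$ admits an affine action on itself this difficulty evaporates and one recovers the stronger stable-birational equivalence of Corollary~\ref{coro:twist}; in general I expect one must argue type by type, constructing the equivariant section from the explicit Cayley transforms for the classical groups and disposing of the exceptional types under the relevant coprimality hypotheses.
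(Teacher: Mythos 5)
Your reduction is correct as far as it goes: passing through the correspondence space $G\ba(\mathsf{G}\times V)$ with the diagonal (conjugation, linear) action, observing that $p_1$ is a vector bundle over $G^{int}\ba\mathsf{G}$ so that $p_1^*$ is invertible, and thereby reducing injectivity of $h^{int}$ to injectivity of $p_2^*$, hence (via Lemma~\ref{lemm:tran}) to the existence of a $G$-equivariant rational map from the linear representation $V$ to $\mathsf{G}$ equipped with the conjugation action. But that existence statement is exactly where the content of the lemma lies, and you do not establish it. The exponential is unavailable over $k=\bar{\F}_p$, the Cayley transform exists only for classical types and still has characteristic restrictions, and your final paragraph concedes that a case-by-case construction would be required. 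As written, your argument proves the lemma only under the extra hypothesis $V\stackrel{G}{\leadsto}\mathsf{G}$ (conjugation action on the target), which is essentially hypothesis (1) of Lemma~\ref{lemm:twist}; the lemma as stated carries no such hypothesis, so this is a genuine gap rather than a stylistic difference.

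The paper avoids the need for any global equivariant map by linearizing \emph{formally at the fixed point}. The identity $e\in\mathsf{G}$ is a smooth point fixed by the conjugation action, and after replacing $G$ by an $\ell$-Sylow subgroup $\Syl_\ell(G)$ with $\ell\neq\char(k)$ (legitimate by Proposition~\ref{prop:div}(4)), the action on the formal neighborhood of $e$ can be linearized: it becomes the linear action on the tangent space $T\mathsf{G}|_e$, which is faithful on $\Syl_\ell(G)$ by the hypothesis $G\cap Z(\mathsf{G}(k))=\{e\}$. A class trivial on $G^{int}\ba\mathsf{G}$ is then trivial on the formal neighborhood of the image of $e$, i.e.\ on the completion at the origin of $\Syl_\ell(G)\ba T\mathsf{G}|_e$, and triviality there forces triviality in $H^i_{k,s}(\Syl_\ell(G),\Z/\ell)$, hence in $H^i_{k,s}(G,\Z/\ell)$. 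In effect the paper replaces your missing global section of $p_2$ by a formal-local equivariant identification of the conjugation action with a linear one, which exists unconditionally once the group order is prime to the characteristic. If you want to salvage your route, you would need to supply either this local argument or the equivariant map $V\dashrightarrow\mathsf{G}$ uniformly in all types and characteristics; the latter is not known to exist in the generality of the lemma.
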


\begin{proof} 
The conjugation action of $G$ on $\mathsf G$ 
is almost free, and the maps $h^{int}, h^{int}_{un}$
are well-defined. The identity 
$e\in G(k)$ is a smooth invariant point for the conjugation action.
In particular, the local (conjugation) action of any
$\ell$-Sylow subgroup of $G$ lifts to characteristic 
$0$ with the same invariant point. The action of $\Syl_\ell(G)$, $\ell\neq p$, 
in a formal neighborhood of $e$ is
equivalent to the linear action on the tangent space $T \mathsf G|_e$ at 
$e$ and lifts to a linear representation of $\Syl_\ell(G)$
in characteristic $0$. 
Since the linear action of $G$ on the linear space $T\mathsf G|_e$ over $k$
is equivariant with respect to the locally free action of $\Syl_\ell(G)$
on $T\mathsf G|_e$, any element of $H^i(\Syl_\ell(G), \Z/\ell)$ 
which is trivial
in the quotient of the formal neighborhood of 
$0\in \hat H^i_{k,s}(T\mathsf G|_e/\Syl_\ell(G),\Z/\ell)$ 
will also be trivial in 
$H^i_{k,s}(T\mathsf G|_e/\Syl_\ell(G),\Z/\ell)= H^i_{k,s}(G, \Z/\ell)$.

This proves the injectivity of $h^{int}$. 
The same argument works for $h^{int}_{un}$.
 \end{proof}

\begin{lemm}
\label{lemm:wg}
Let $G, H$ be finite groups. Let $\rho \,:\, H\ra W$ be a 
faithful $k$-representation of $H$. 
Assume that $H\ba W$ is stably rational.
Let $U$ be a faithful representation of $\tilde{G}:=H\wr_{S} G$, 
where $S$ is a finite $G$-set. 
Then $\tilde{G}\ba U$ is stably birationally equivalent to $G\ba V$, 
where $V$ is a faithful representation of $G$.
\end{lemm}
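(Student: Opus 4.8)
The plan is to exploit the freedom in the choice of the faithful representation $U$. Since two faithful representations of a finite group have stably birationally equivalent quotients (if $U_1,U_2$ are faithful, then $G\ba U_1\sim G\ba(U_1\times U_2)\sim G\ba U_2$ by the no-name lemma, Corollary~\ref{coro:twist}), the stable birational type of $\tilde G\ba U$ does not depend on $U$, and it suffices to treat one convenient model. I would take the induced representation
$$
U:=W^{\oplus S}\oplus V=\Big(\bigoplus_{s\in S}W_s\Big)\oplus V,
$$
where each $W_s$ is a copy of $W$, the factor $H^S=\prod_{s\in S}H$ acts with the $s$-th copy of $H$ acting on $W_s$ through $\rho$, the group $G$ permutes the summands $W_s$ according to its action on $S$, $V$ is a faithful $G$-representation, and $H^S$ acts trivially on $V$. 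This is faithful for $\tilde G=H\wr_S G$, since $V$ detects $G$ and $W^{\oplus S}$ detects $H^S$.

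Next I would quotient by the normal subgroup $H^S\triangleleft\tilde G$, with $\tilde G/H^S=G$. As $H^S$ acts factorwise on $W^{\oplus S}$ and trivially on $V$,
$$
\tilde G\ba U=G\ba\big(H^S\ba U\big)=G\ba\big(Z^{\times S}\times V\big),\qquad Z:=H\ba W,\quad Z^{\times S}:=\prod_{s\in S}Z,
$$
where the residual $G$-action permutes the $S$ factors of $Z^{\times S}$ and acts on $V$ through the given faithful representation. Thus everything reduces to proving $G\ba(Z^{\times S}\times V)\sim G\ba V$, i.e. that the permutation factor $Z^{\times S}$ contributes nothing up to stable birationality.

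The key step is to upgrade the stable rationality of $Z$ to a \emph{$G$-equivariant} triviality for the permutation action. By hypothesis $Z=H\ba W$ is stably rational, so for some $m$ there is a birational map $\psi$ between $Z\times\A^m$ and $\A^{m+d}$, with $d=\dim Z$. Applying $\psi$ simultaneously in each coordinate $s\in S$ yields a birational map $\psi^{\times S}$ between $(Z\times\A^m)^{\times S}$ and $(\A^{m+d})^{\times S}$ that is $G$-equivariant, because $\psi$ is applied identically in every factor while $G$ merely permutes the factors on both sides. Crucially, the target $P:=(\A^{m+d})^{\times S}$, with $G$ permuting the $S$ blocks, is a \emph{linear} $G$-representation, namely $(m+d)$ copies of the permutation representation $k[S]$. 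Writing $Q:=(\A^m)^{\times S}=k[S]^{\oplus m}$ and using $Z^{\times S}\times Q=(Z\times\A^m)^{\times S}$, I obtain a $G$-equivariant birational equivalence between $Z^{\times S}\times Q$ and $P$. It remains to strip the linear factors via the no-name lemma (Corollary~\ref{coro:twist}), which applies since the faithful factor $V$ makes each action almost free: adding $Q$ and transporting along the equivariant equivalence,
$$
G\ba(Z^{\times S}\times V)\ \sim\ G\ba(Z^{\times S}\times V\times Q)\ \sim\ G\ba(P\times V)=G\ba(P\oplus V),
$$
and a final application of the no-name lemma removes the linear summand $P$ from the faithful representation $V$, giving $G\ba(P\oplus V)\sim G\ba V$. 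Combined with the reduction above this gives $\tilde G\ba U\sim G\ba V$.

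I expect the main obstacle to be precisely the equivariant step of the third paragraph: the stable rationality of $Z$ is a priori a statement over $k$ with no reference to $G$, and the whole point is that the ``$S$-th power'' trick promotes it to a $G$-equivariant stable triviality of $Z^{\times S}$ whose linear model is an honest permutation representation — this is exactly what allows the no-name lemma to absorb the factor. The remaining ingredients (faithfulness of the model $U$, the normal-subgroup quotient $\tilde G\ba U=G\ba(H^S\ba U)$, and the almost-freeness supplied by $V$ that the no-name lemma requires) are routine.
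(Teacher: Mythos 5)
Your proposal is correct and follows essentially the same route as the paper: the same model $U=W^{\oplus S}\oplus V$, the quotient by the normal subgroup $H^S$ yielding $(H\ba W)^{\times S}\times V$ with $G$ permuting the factors, the observation that a birational trivialization of $H\ba W$ applied diagonally is $G$-equivariant with a permutation representation as target, and the no-name lemma to absorb the linear factors. Your handling of ``stably rational'' by carrying the auxiliary factor $\A^m$ explicitly is just a more careful version of the paper's remark that one may assume $H\ba W$ is rational.
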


\begin{proof}
Put
$$
\rho_{S}=\oplus_{s\in S} \rho_s \,:\, H_S:=\prod_{s\in S}  H_s \ra \Aut(W_S), \quad
W_S=\oplus_{s\in S} W_s, 
$$
where $H_s=H$, $W_s=W$,  for all $s\in S$, and $\rho_s=\rho$ on the factor $H_s$
and trivial on $H_{s'}$, for $s'\neq s$. 
We construct $U:= V'\oplus W_S$, where $V'$
is a faithful representation of $G$ and extend the action of $G$ to $V_S$ via
the $G$-action on $S$. This gives a representation of 
$\tilde{G}=H\wr_S G$ in $\Aut(U)$.  
The quotient space is a fibration over $G\ba V'$ with fibers $(H\ba W)^{|S|}$.
We can assume that $H\ba W$ is rational. The action of $G$ on  $(H\ba W)^{|S|}$
permutes the coordinates. It follows that $\tilde{G}\ba U$ is birationally 
equivalent to a vector bundle over $G\ba V'$. 
 \end{proof}

\begin{coro}
Let $G=\Syl_{\ell}(\mathfrak S_n)$ and let $V$ be a faithful representation of $G$. 
Then $G\ba V$ is stably rational. 
\end{coro}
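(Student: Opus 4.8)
The plan is to decompose $\Syl_\ell(\mathfrak{S}_n)$ into its standard building blocks and then use Lemma~\ref{lemm:wg} as the engine of an induction. Writing $n=\sum_j a_j\ell^j$ in base $\ell$, one has
$$
\Syl_\ell(\mathfrak{S}_n)\simeq \prod_j W_j^{\,a_j},
$$
where $W_j=\Syl_\ell(\mathfrak{S}_{\ell^j})$ is the $j$-fold iterated regular wreath product of $\Z/\ell$, characterized by $W_1=\Z/\ell$ and the recursion
$$
W_j\simeq W_{j-1}\wr_{\Z/\ell}(\Z/\ell),
$$
in which $\Z/\ell$ acts on the index set $S=\Z/\ell$ by its regular (translation) action.

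First I would reduce to the individual factors $W_j$. Since stable rationality of $G\ba V$ is independent of the choice of faithful representation $V$ (the no-name lemma, in the spirit of Corollary~\ref{coro:twist}), it suffices to produce, for each factor, one faithful representation with stably rational quotient. For a direct product $G_1\times G_2$ acting on $V_1\oplus V_2$ the quotient is $(G_1\ba V_1)\times(G_2\ba V_2)$, and a product of stably rational varieties is stably rational; hence the claim for $\Syl_\ell(\mathfrak{S}_n)$ follows once it is known for each $W_j$.

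Next I would prove that $W_j\ba V$ is stably rational by induction on $j$. The base case $W_1=\Z/\ell$ is the classical rationality of the quotient of a faithful linear action of a cyclic group. For the inductive step I apply Lemma~\ref{lemm:wg} with $H=W_{j-1}$, $G=\Z/\ell$, and $S=\Z/\ell$ the regular $G$-set, so that $\tilde{G}=H\wr_S G=W_j$. The hypothesis that $H\ba W$ is stably rational is exactly the inductive assumption on $W_{j-1}$, and the conclusion of the lemma is that $W_j\ba U$ is stably birationally equivalent to $(\Z/\ell)\ba V$, which is rational. This closes the induction and, combined with the previous paragraph, proves the corollary.

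The only point demanding care --- and the main potential obstacle --- is the bookkeeping that aligns the group-theoretic recursion with the hypotheses of Lemma~\ref{lemm:wg}: one must use the regular permutation action of $\Z/\ell$ at each stage and verify that the recursion genuinely terminates at the cyclic group $\Z/\ell$, whose linear quotients are rational. Once this dictionary is fixed, both the product step and the wreath step are formal consequences of results already established.
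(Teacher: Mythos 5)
Your proposal is correct and follows essentially the same route as the paper: both use the decomposition of $\Syl_\ell(\mathfrak S_n)$ into products of iterated wreath products of $\Z/\ell$, the rationality of $V/(\Z/\ell)$ as the base case, and Lemma~\ref{lemm:wg} as the inductive engine. Your write-up merely makes explicit the bookkeeping (base-$\ell$ expansion, regular $G$-set, product step) that the paper leaves implicit.
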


\begin{proof}
The $\ell$-Sylow subgroups of $\mathfrak S_n$ are products of wreath products
of groups $\Z/{\ell} \wr \cdots \wr \Z/\ell$ (see \cite[VI.1]{adem-milgram}).
The quotient $H\ba W$ is rational, for a faithful representation 
$W$ of $H=\Z/\ell$. We apply induction to conclude that
the quotient $G\ba V$ is stably rational. 
 \end{proof}

\begin{coro}
Let $G=\Syl_{\ell}(\mathsf{GL}_n(\F_q))$, with $\ell\nmid q$, and let $V$ be
a faithful representation of $G$. Then $G\ba V$ is stably rational.
\end{coro}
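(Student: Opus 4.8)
The plan is to imitate the proof of the preceding corollary for $\Syl_\ell(\mathfrak S_n)$, feeding the structure theory of Sylow subgroups of $\mathsf{GL}_n(\F_q)$ (Weir) into Lemma~\ref{lemm:wg}. Since $\ell\nmid q$, put $d:=\mathrm{ord}_\ell(q)$ and write $n=dm+r$ with $0\le r<d$; the factors $q^i-1$ with $d\nmid i$ contribute nothing to the $\ell$-part, so that
$$
\Syl_\ell(\mathsf{GL}_n(\F_q))\cong \Syl_\ell(\mathsf{GL}_{dm}(\F_q))\cong C\wr_S \Syl_\ell(\mathfrak S_m),
$$
where $S=\{1,\dots,m\}$ carries the natural permutation action and $C$ is the $\ell$-Sylow subgroup of the normalizer of the Singer torus $\F_{q^d}^\times\subset\mathsf{GL}_d(\F_q)$. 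Concretely $C^m\rtimes\Syl_\ell(\mathfrak S_m)$ sits inside the block-monomial subgroup $\mathsf{GL}_d(\F_q)^m\rtimes\mathfrak S_m\subset\mathsf{GL}_{dm}(\F_q)$, and this is exactly a wreath product $H\wr_S G$ in the sense of Lemma~\ref{lemm:wg}, with $H=C$ and $G=\Syl_\ell(\mathfrak S_m)$.

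First I would dispose of the base group $C$. For $\ell$ odd, and for $\ell=2$ with $q\equiv 1\pmod{4}$, the group $C$ is cyclic, and Fischer's theorem shows $C\ba W$ is rational for every faithful $W$ (over $\C$, or over any algebraically closed $k$ with $\mathrm{char}(k)\neq\ell$, so that the $|C|$-th roots of unity are present; this covers $k=\bar{\F}_p$ with $p\neq\ell$). Granting $C\ba W$ stably rational, a single application of Lemma~\ref{lemm:wg} with $H=C$ gives
$$
\Syl_\ell(\mathsf{GL}_n(\F_q))\ba U \sim \Syl_\ell(\mathfrak S_m)\ba V,
$$
and the right-hand side is stably rational by the preceding corollary for symmetric groups. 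As $G\ba V$ is well defined up to stable birational equivalence independently of the faithful representation, this settles all these cases at once.

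The hard part will be $\ell=2$ with $q\equiv 3\pmod{4}$. Here the Frobenius $x\mapsto x^q$ normalizing the Singer torus acts nontrivially on its cyclic $2$-part, so $C$ is no longer cyclic but the Sylow $2$-subgroup of $\mathsf{GL}_2(\F_q)$, namely a semidihedral group of order $2^{\,v_2(q^2-1)+1}$ (for instance $C=SD_{16}$ when $n=2$, $q=3$). The inductive mechanism above applies verbatim once $C\ba W$ is known to be stably rational, and the point is that $C$ is still \emph{metacyclic}: it contains the cyclic $2$-part of $\F_{q^2}^\times$ as a subgroup of index two. Hence I would invoke the affirmative solution of Noether's problem for metacyclic $p$-groups to conclude that $C\ba W$ is rational over $\C$. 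The main obstacle is thus not the induction but pinning down the isomorphism type of the base group precisely enough to recognize it as metacyclic; once that is done, the residual contributions when $r\neq 0$ or $n$ is odd only introduce extra cyclic direct factors (e.g.\ a $\Z/2$ coming from $\mathsf{GL}_1(\F_q)$), whose quotients are rational by Fischer, and a direct product of stably rational quotients is again stably rational.
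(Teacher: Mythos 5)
Your proposal is correct and follows the same overall strategy as the paper: feed the wreath-product structure of $\Syl_{\ell}(\mathsf{GL}_n(\F_q))$ into Lemma~\ref{lemm:wg} and reduce to the symmetric-group corollary. The paper's own proof is a two-line appeal to the fact that these Sylow subgroups are ``products of iterated wreath products of cyclic $\ell$-groups,'' which handles the base of the induction by Fischer exactly as in the $\mathfrak S_n$ case. Your version is more careful and in fact addresses a case the paper's citation glosses over: for $\ell=2$ and $q\equiv 3\pmod 4$ the base group of the wreath product is not cyclic but the semidihedral Sylow $2$-subgroup of $\mathsf{GL}_2(\F_q)$, which is \emph{not} an iterated wreath product of cyclic $2$-groups (it is nonabelian of exponent $2^{v_2(q^2-1)}$), so the paper's stated reduction does not literally apply there. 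Your fix --- observing that this base group is metacyclic and invoking the affirmative answer to Noether's problem for metacyclic $p$-groups over fields containing enough roots of unity (valid over any algebraically closed $k$ with $\mathrm{char}(k)\neq \ell$, hence over $\C$ and $\bar{\F}_p$, $p\neq\ell$) --- is a genuine and necessary supplement, and the rest (the residual cyclic direct factors for $n\not\equiv 0$, and stability of stable rationality under direct products) is handled correctly. One small imprecision: your opening definition $d=\mathrm{ord}_{\ell}(q)$ gives $d=1$ for $\ell=2$ and all odd $q$, which contradicts the block size $2$ you correctly use later when $q\equiv 3\pmod 4$; the case split you perform afterwards repairs this, but the definition of $d$ should be adjusted for $\ell=2$ from the outset.
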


\begin{proof}
The structure of $\ell$-Sylow subgroups of $\mathsf{GL}_n(\F_q)$ is known
(see \cite[VII.4]{adem-milgram}: it is also a product of iterated 
wreath products of cyclic $\ell$-groups. 
Thus we can apply Lemma~\ref{lemm:wg}.
 \end{proof}            

\begin{coro}
\label{coro:gln}
Let $k$ be an algebraically closed field of 
characteristic zero. 
Then 
$$
H^i_{k,un}(\mathsf{GL}_n(\F_q), \Z/\ell)=0 \quad \text{ for all } \,\, i>0, \,\,
\text{ and }\,\, \ell\nmid q.
$$
\end{coro}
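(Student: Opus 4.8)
The plan is to reduce the computation to the $\ell$-Sylow subgroup and then invoke the stable rationality established in the preceding corollary. First I would apply the reduction-to-Sylow theorem of this section with $G=\mathsf{GL}_n(\F_q)$: since $k$ has characteristic zero we have $\ell\neq\char(k)$, so
$$
H^i_{k,un}(G,\Z/\ell)\stackrel{\sim}{\longrightarrow}
H^i_{k,un}(\Syl_{\ell}(G),\Z/\ell)^{\mathfrak N_G(\Syl_{\ell}(G))}.
$$
This isomorphism holds because the map $\pi\colon\Syl_{\ell}(G)\ba V\to G\ba V$ is finite, separable and of degree $[G:\Syl_{\ell}(G)]$ prime to $\ell$, so that $\pi_*\circ\pi^*$ is invertible on $\Z/\ell$-cohomology and the local ramification indices are prime to $\ell$. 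It therefore suffices to prove $H^i_{k,un}(\Syl_{\ell}(G),\Z/\ell)=0$ for all $i>0$.

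Here the hypothesis $\ell\nmid q$ enters. By the preceding corollary, the quotient $\Syl_{\ell}(\mathsf{GL}_n(\F_q))\ba V$ is stably rational over $k$; the point is that the $\ell$-Sylow subgroup is an iterated wreath product of cyclic $\ell$-groups, so that Lemma~\ref{lemm:wg} applies inductively, starting from the rationality of $(\Z/\ell)\ba W$ for a faithful representation $W$ of $\Z/\ell$.

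It remains to pass from stable rationality of the quotient to the vanishing of unramified cohomology. Since unramified cohomology is independent of the chosen faithful representation (Proposition~\ref{prop:unramcoh}) and is a stable birational invariant of the function field $k(V)^G$, stable rationality reduces us to the purely transcendental case, where the groups vanish by Theorem~\ref{thm:reason}. Hence $H^i_{k,un}(\Syl_{\ell}(\mathsf{GL}_n(\F_q)),\Z/\ell)=0$ for $i>0$, and taking $\mathfrak N_G(\Syl_{\ell}(G))$-invariants in the displayed isomorphism yields the corollary.

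The substantive work --- and the only genuine difficulty --- lies entirely in the two inputs already in hand: the explicit wreath-product description of $\Syl_{\ell}(\mathsf{GL}_n(\F_q))$ together with the inductive stable rationality of Lemma~\ref{lemm:wg}, and the transfer-plus-ramification argument underpinning the Sylow reduction. For the corollary itself the only point to check is that its two hypotheses, characteristic zero and $\ell\nmid q$, simultaneously license both ingredients: the former guarantees $\ell\neq\char(k)$ so that the Sylow reduction applies, while the latter guarantees the cyclic wreath-product structure that feeds the inductive rationality argument.
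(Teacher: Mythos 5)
Your proof is correct and follows exactly the route the paper intends: the Sylow reduction theorem of Section~\ref{sect:splitting}, the stable rationality of $\Syl_{\ell}(\mathsf{GL}_n(\F_q))\ba V$ from the preceding corollary (via Lemma~\ref{lemm:wg} and the wreath-product structure), and the vanishing of unramified cohomology for stably rational quotients. The paper leaves this corollary without an explicit proof precisely because these are the ingredients just assembled, so your write-up fills in the intended argument faithfully.
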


\begin{rem}
Similar computations can be performed for some other finite groups of Lie type, 
e.g., for $\mathsf{O}^{\pm}_{2m}(\F_q)$ and $\mathsf{Sp}_n(\F_q)$.
\end{rem}

\providecommand{\bysame}{\leavevmode ---\ }
\providecommand{\og}{``}
\providecommand{\fg}{''}
\providecommand{\smfandname}{and}
\providecommand{\smfedsname}{eds.}
\providecommand{\smfedname}{ed.}

\end{document}